\newtheorem{theorem}{Theorem}
\newtheorem{corollary}[theorem]{Corollary}
\newtheorem{lemma}[theorem]{Lemma}
\newtheorem{remark}[theorem]{Remark}
\numberwithin  {equation}{section}
\newcommand{\R }{\mathbb{R}}
\renewcommand{\L }{\mathcal{L}}
\renewcommand{\phi}{\varphi }
\begin{document}
\title[] { Sublinear elliptic problems under radiality. Harmonic $NA$ groups and Euclidean spaces. }
\author{ Ewa Damek, Ghardallou Zeineb\\
{\tiny{\emph{Departement of Mathematics, Wroclaw University, 50-384 Wroclaw, Plac Grunwaldzki 2/4, Poland
\\Faculty of sciences of Tunis, Department of Mathematical Analysis and Applications, University Tunis El Manar, LR11ES11, 2092 El Manar, Tunis, Tunisia}}}} 
 
\maketitle\footnote{ \noindent The first author was supported by the NCN Grant UMO-2014/15/B/ST1/00060.}





\textbf{Abstract}
Let $\L $ be the Laplace operator on $\R ^d$, $d\geq 3$ or the Laplace Beltrami operator on the harmonic $NA$ group (in particular on a rank one noncompact symmetric space). For the equation $ \L u  - \varphi(\cdot,u)=0$ we
give necessary and sufficient conditions for the existence of entire
 bounded or large solutions under the hypothesis of
radiality of $\varphi$ with respect to the first variable. 
A Harnack-type inequality for positive continuous solutions is also proved.\\

\textbf{Keywords}. Sublinear elliptic problems; Harnack-type inequality;  Damek-Ricci spaces; Entire solutions; Large solutions; Radiality.\\

\textbf{Mathematic Subject Classification (2010)}: Primary: 35J08, 35J61, 53C35, 58J05; Secondary: 22E30, 43A80, 53C35.

\section{Introduction}

Let $\L $ be the Laplace operator on $\R ^d$ or the Laplace-Beltrami operator  on a harmonic $NA$ group \footnote{Such groups are also called
Damek-Ricci spaces and the family includes the rank one noncompact symmetric spaces. For their basic properties see Section \ref{NAgroups}.}. We are interested in nonnegative entire solutions to 
\begin{equation}\label{subproblem}
\L u(x) - \varphi (x, u(x))=0, \ x\in S,
\end{equation}
where $S$ is either Euclidean $\R^d$ or harmonic $NA$ and 
 $\varphi : S\times [0,\infty )\to [0,\infty )$ is radial with respect to the first variable. As a Riemannian manifold harmonic $NA$ group is diffeomorphic to $\R ^d $ with an appropriate Riemannian metric and it has global geodesic coordinates. Therefore, 
radiality means that  $\phi (x,t)=\phi (d(0,x),t)$ where $d$ is the  Riemannian distance on harmonic $NA$ or Euclidean distance on $\R ^d$. The behavior of radial objects is very similar on both spaces and so we may apply a unified approach.

Our aim is to give necessary and sufficient conditions for existence of entire
solutions bounded or ``large'' under the hypothesis of radiality. 
A solution $u$ to
\eqref{subproblem} is called {\it large } if $u(x)\to \infty $ when $d(x,0)\to \infty $. The characterization we obtain is easily formulated in terms of $\phi $, the condition is straightforward to check and the whole problem is nicely linked to the geometry of the harmonic space $NA$. Since non compact rank one symmetric spaces are among harmonic $NA$ groups, the results
we obtain apply also to the Laplace Beltrami operator there. In particular,
the equation
$$
Lu-u^{\gamma },\quad \gamma \leq 1$$
on rank a one non compact symmetric space does not have a bounded solution, but it has a radial large solution.

{Large solutions i.e. the boundary blow-up problems are of considerable interest due to its several scientific applications in different fields. Such problems arise in the study of Riemannian geometry \cite{Cheng-Ni}, non-Newtonian fluids \cite{Astrita-Marrucci}, the subsonic motion of a gas \cite{Pohzaev} and the electric potential in some bodies \cite{Lazer-Mckenna}.

The vast majority of papers studying similar problems consider the equation $\Delta u + g(x,u)=0$, for the Lapace operator $\Delta $ on $\R ^d$. Variety of hypotheses on $g$ have been supposed and various questions have been asked (see e.g. \cite{D-G} for comprehensive references). In general the semilinear term is not nonnegative or radial with respect to the first variable.
Let us abandon for a moment radiality of $\phi $ but stick to $\phi \geq 0$.
Then under mild conditions bounded entire solutions of \eqref{subproblem} can be fully characterized and linked to the bounded $\L- $harmonic functions (see \cite{Ghardallou2} for more general elliptic operators or Section \ref{secbounded}). Secondly, entire bounded and large solutions cannot exist at the same time \cite{D-G}. Therefore, a neat characterization of existence of this two types of solutions is essential.

Throughout this paper, }we assume as little regularity of $\phi $ as possible but we want $\phi $ to be  nondecreasing with respect to the second variable. This allows us to apply a quite general potential theoretical approach \cite{ BH,ELMabrouk2004,Ghardallou2}. To summarize, our assumptions are:

\medskip
\begin{description}
	\item[($H_1$)] For every $t_0\in [0,+\infty )$, $x\mapsto \varphi(x,t_0)\in K_d^{loc}(\Omega)$
	i.e. it is locally in the Kato class. \footnote{We say that a Borel measurable function
		$p$ on $\Omega$ is locally in the Kato class in $\Omega$ if $$ \lim\limits_{\alpha\to
			0}\sup_{x\in D}\int_{D\cap(|x-y|\leq \alpha)}\frac{|p(y)|}{|x-y|^{d-2}}\,dy=0$$ for
		every open bounded set satisfying  ${D}\subset \Omega $.}
	\item[$(H_2)$] For every $x_0\in \Omega$,  $t\mapsto \varphi(x_0, t )$ is   continuous
	increasing on $[0,+\infty )$
	
	\item[$(H_3)$]$\varphi(x,t)=0$ for every $x\in \Omega$ and $t\leq 0$.  

\end{description}

\medskip
 $(H_1)$  makes $\phi $ locally integrable against the fundamental solution to $\L $ which plays a crucial role in our approach. $(H_3)$ is a technical extension of $\phi $ to $(-\infty , 0]$ needed as a tool. $(H_2)$ is motivated by the previous results for $\Delta $ \cite{ElmabroukHansen,Lair2007, LairWood2000}, where $\phi (x,t)= p(x)\psi (t)$ and $\psi (t)=t^{\gamma}$ or
$\psi $  satisfies so called Keller-Osserman condition. Moreover, under $(H_1)-(H_3)$ we have full description of bounded solutions provided the Green potential of $\phi $ is finite, see \eqref{finitepotential} in Section \ref{secbounded}.

Equation \eqref{subproblem} with $\L $ being the Laplace operator $\Delta $ on $\R ^d $ has been studied with the semilinear part of the form $\phi (x,t)=p(x)u^{\alpha }+q(x)u^{\beta }$, $p,q$ positive functions, $0<\alpha \leq \beta $ or, more generally, $\phi (x,t)=p(x)f(u)+q(x)g(u)$ \cite{AACH, Bandle,ELMabrouk2004,ElmabroukHansen,Lair2007,Lair, LM,LairWood2000,Ahmed}. Then conditions for existence or nonexistence of large solutions are formulated in terms of $\int _0^{\infty }rh(r)dr$ being finite or infinite, where $h(r)$ is one of the functions: $\max _{|x|=r}p(x)$, $\max _{|x|=r}q(x)$, $\min _{|x|=r}p(x)$, $\min _{|x|=r}q(x)$, \cite{ElmabroukHansen,Lair}. So it makes sense to consider first the radial case. If 
 $\varphi (x,t)=p(x)t^{\gamma }, \gamma <1$, $p$ is radial, locally bounded then 
$$\Delta u - p(x)u^{\gamma }=0$$
has an entire large solution if and only if $$ \int_{0}^{\infty} rp(r) dr= \infty ,$$ \cite{ElmabroukHansen}, see also \cite{LairWood2000}. Otherwise \eqref{ancientproblem} has an entire bounded solution.
We obtain here a similar characterization under weaker conditions on $\phi $ both on $NA$ and $\R ^d$ which generalizes also existing results for the Euclidean case.
 
For radial $\varphi $ we write $\varphi (r,t)=\varphi (x,t)$, where $r=d(x,0)$. Let 
\begin{equation}\label{IvarphiR^d}
I\varphi (\cdot,c)=\int _0^{\infty}r\varphi (r,c)\ dr,
\end{equation}
for $\phi $ on Euclidean $S$, while for $\varphi $ on harmonic $S $ we write
\begin{equation}\label{IvarphiNA}
I\varphi (\cdot,c)=\int _0^{\infty}\varphi (r,c)\ dr.
\end{equation}
We prove that \eqref{subproblem} has always an entire solution. Being bounded or large depend essentially on the growth of $\varphi(\cdot,c)$ measured by $I\varphi(\cdot,c)$ (see Section
\ref{bounded-large-radial-case}): 

\begin{theorem}\label{texistence2}
	Suppose that $\varphi (s,t)=p(s)\psi (t)$, satisfies  $(H_2)$-$(H_3)$, $p\in \mathcal{L}^{\infty}_{loc}(S)$ is radial and there is $c>0$ such that
	\begin{equation}\label{psi}
\psi (t)\leq c(t+1).
\end{equation}
		Then \eqref{subproblem} has always an entire radial solution. Moreover, there is a bounded
	solution if and only if $I(p)<\infty $ and there is a large solution if and only if $I(p)=\infty $.
	
\end{theorem}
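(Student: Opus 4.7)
The plan is to exploit radiality to reduce \eqref{subproblem} to a one-dimensional integral equation and then construct the entire radial solution by monotone iteration. For radial $u(r)$, the operator $\L$ takes the Sturm--Liouville form $\L u = J(r)^{-1}(J(r)u'(r))'$, where $J(r)=r^{d-1}$ in the Euclidean case and $J(r)$ is the radial volume density on a harmonic $NA$ group---comparable to $r^{d-1}$ near $0$ and to $e^{Qr}$ at infinity for the constant $Q>0$ determined by the group. Imposing the radial initial data $u(0)=c$, $u'(0)=0$ and integrating twice, \eqref{subproblem} is equivalent to
\begin{equation}\label{intplan}
u(r) = c + \int_0^r K(r,s)\, p(s)\,\psi(u(s))\, ds, \qquad K(r,s) := J(s)\int_s^r \frac{dt}{J(t)}.
\end{equation}
A direct computation of the asymptotics of $K$ as $r\to\infty$ gives $K(r,s)\to s/(d-2)$ on $\R^d$, and $K(r,s)\to K_\infty(s):=J(s)\int_s^\infty dt/J(t)$, a bounded positive function of $s$, on harmonic $NA$. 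This is exactly what motivates the two different weights in the definition \eqref{IvarphiR^d}--\eqref{IvarphiNA} of $I(p)$.

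Next I would produce an entire solution by iterating \eqref{intplan}: set $u_0\equiv c$ and define $u_{n+1}$ by inserting $u_n$ into the right-hand side. Monotonicity of $\psi$ in $(H_2)$ makes $\{u_n\}$ pointwise nondecreasing in $n$, and each $u_n$ is nondecreasing in $r$. Using the sublinearity $\psi(t)\le c(t+1)$ together with the monotonicity of $K(\cdot,s)$ in its first argument, on each bounded interval $[0,R]$ a standard Gronwall argument yields a uniform-in-$n$ bound on $u_n$. Monotone convergence then produces a continuous limit $u$ satisfying \eqref{intplan}, and since $p\in\mathcal{L}^\infty_{loc}$ and $\psi$ is continuous, $u$ is $C^2$ and solves \eqref{subproblem}.

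To separate the two regimes, I would pass to the limit $r\to\infty$ in \eqref{intplan}. If $I(p)<\infty$, the Gronwall bound is global in $r$, so the constructed $u$ is bounded. If $I(p)=\infty$, pick $c$ with $\psi(c)>0$ (otherwise $\psi\equiv 0$ and the conclusion is trivial); since $u\ge c$ everywhere,
\[
u(r)\ge c+\psi(c)\int_0^{r/2} K(r,s)p(s)\,ds,
\]
and on the range $s\le r/2$ one checks $K(r,s)\ge \alpha K_\infty(s)$ for some $\alpha>0$, so the right-hand side diverges to $+\infty$ and $u$ is large. Conversely, any bounded solution $u\not\equiv 0$ is nondecreasing because $(Ju')'=Jp\psi(u)\ge 0$ with $u'(0)=0$; hence $\psi(u(s))\ge \psi(u(r_0))>0$ for $s$ past a suitable $r_0$, and passing $r\to\infty$ in \eqref{intplan} forces $I(p)<\infty$. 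The main technical obstacle is the unified treatment of both geometries---specifically, sharp two-sided control of $\int_s^\infty dt/J(t)$ on harmonic $NA$; once these estimates are in hand, the Euclidean and $NA$ cases run in parallel, the entire geometric difference being absorbed into the definition of $I(p)$.
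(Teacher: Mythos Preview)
Your ODE/integral-equation approach is a genuinely different and more elementary route for the \emph{constructive} half of the theorem. The monotone iteration on \eqref{intplan}, with the sublinear bound $\psi(t)\le c(t+1)$ feeding a Gronwall estimate (equivalently, comparison with the linear problem $\L w=cp\,w$), does produce an entire radial solution that is bounded when $I(p)<\infty$ and large when $I(p)=\infty$. The paper, by contrast, builds solutions potential-theoretically: it uses a Schauder fixed point to define $U^{\varphi}_{B_n}\lambda$ on balls (Lemma~\ref{radiality-in-bounded}), selects boundary data via Lemma~\ref{the-function-z} so as to pin the value at the origin, and passes to the limit through an Arzel\`a--Ascoli argument (Lemma~\ref{subsequence-Ascoli-converge-uniformly}) whose equicontinuity step rests on the Harnack-type inequality of Theorem~\ref{Mainthe-Harnack-ineq}. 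Your argument bypasses all of that machinery for existence, which is a real gain. (A minor point: with $p\in\mathcal L^{\infty}_{loc}$ only, the limit is $C^1$ with $u'$ absolutely continuous, not $C^2$.)

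The genuine gap is in the converses. The theorem asserts the ``only if'' directions for \emph{arbitrary} solutions, but your sentence ``any bounded solution $u\not\equiv 0$ is nondecreasing because $(Ju')'=Jp\psi(u)\ge 0$ with $u'(0)=0$'' already presupposes that $u$ is radial. You have not shown that a \emph{non-radial} bounded solution forces $I(p)<\infty$, nor that a non-radial large solution forces $I(p)=\infty$; and radial averaging of a solution does not produce a sub- or supersolution of \eqref{subproblem} without concavity of $\psi$, so the ODE picture alone cannot close this. The paper handles the bounded converse in Theorem~\ref{bounded-solution} by starting from an arbitrary bounded $w$, forming the radial solutions $U^{\varphi}_{B_n}(\sup w)$ and passing to a radial bounded limit to which the one-dimensional argument applies; the large converse comes from the non-coexistence result Theorem~\ref{existence} (imported from \cite{D-G}): if $I(p)<\infty$ a bounded solution exists, hence no large solution can. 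Both ingredients lie outside your integral-equation framework, so as written your proof establishes only the radial version of the two equivalences.
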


\begin{theorem}\label{texistence}
	Suppose that ($H_1$)-($H_3$) are satisfied and $\varphi$ is radial with respect to the first variable. Suppose further that
	
	\medskip \noindent
	($H_4$): For every $x_0\in \Omega$,  $t\mapsto \varphi(x_0, t )$ is concave on $[0,+\infty[$.
	
	\medskip
	Then \eqref{subproblem} has always an entire radial solution. Moreover, there is a
	bounded solution if and only if there exists $c>0$ such that, $I\varphi (\cdot ,c)<\infty $ and there is a large
	solution if and only if for every $c>0$, $I\varphi (\cdot ,c)=\infty $.
\end{theorem}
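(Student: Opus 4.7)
The strategy is to reduce the concave case to the product-form case of Theorem~\ref{texistence2} by extracting a linear majorant of $\varphi$ from concavity, and then to exploit the radial ODE form of \eqref{subproblem} for the converse implications.

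Since $(H_2)$--$(H_3)$ force $\varphi(x,0)=0$, concavity of $t\mapsto\varphi(x,t)$ on $[0,\infty)$ makes $\varphi(x,t)/t$ nonincreasing. This yields the pointwise bound
$$\varphi(x,t)\le \varphi(x,c)\Bigl(1+\tfrac{t}{c}\Bigr),\qquad x\in S,\ c>0,\ t\ge 0.$$
In particular the finiteness of $I\varphi(\cdot,c)$ does not depend on $c>0$, so the two conditions in the statement sharpen to the clean dichotomy: either $I\varphi(\cdot,c)<\infty$ for all $c>0$, or $I\varphi(\cdot,c)=\infty$ for all $c>0$.

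For existence and the bounded regime, apply Theorem~\ref{texistence2} with $p=\varphi(\cdot,1)$ and $\psi(t)=1+t$ (which obeys \eqref{psi}) to produce an entire radial $v$ solving $\L v=p(1+v)\ge\varphi(\cdot,v)$, bounded when $I\varphi(\cdot,1)<\infty$ and large otherwise. This $v$ is a radial supersolution of \eqref{subproblem}; paired with the subsolution $0$, the monotone iteration in the potential-theoretic framework of \cite{BH,Ghardallou2} produces an entire radial solution $u$ with $0\le u\le v$, yielding a bounded solution when $I\varphi(\cdot,c)<\infty$. In the large regime I instead solve the radial Cauchy problem $(A(r)u'(r))'=A(r)\varphi(r,u(r))$ with $u(0)=\alpha>0$, $u'(0)=0$ ($A$ the radial volume density on $S$); comparison with the corresponding linear Cauchy problem built from the majorant gives a global a priori bound, and $(Au')'\ge 0$ with $u'(0)=0$ forces $u$ nondecreasing.

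The converse implications rest on the integrated identity
$$u(r)-u(0)=\int_0^r\frac{1}{A(s)}\int_0^s A(t)\varphi(t,u(t))\,dt\,ds,$$
together with the tail asymptotics of $\int_t^\infty A(s)^{-1}\,ds$, which behaves like $c\,t^{-(d-2)}$ on Euclidean $\R^d$ ($d\ge 3$) and like $c\,A(t)^{-1}$ on harmonic $NA$: a nontrivial bounded radial solution with $u(0)=\alpha>0$ satisfies $u\ge\alpha$, and Fubini converts the boundedness of the left side into $I\varphi(\cdot,\alpha)<\infty$. In the large regime this rules out nontrivial bounded solutions, so the radial $u$ constructed above must diverge to $\infty$ and is therefore large; conversely, the non-coexistence of bounded and large solutions recalled from \cite{D-G} shows that existence of a large solution forces $I\varphi(\cdot,c)=\infty$ for all $c$. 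The main technical obstacle will be carrying the monotone iteration through cleanly under only the Kato-class regularity $(H_1)$ rather than the $\mathcal{L}^\infty_{loc}$ hypothesis of Theorem~\ref{texistence2}; here the uniform control provided by the linear majorant $v$, together with standard potential-theoretic compactness, is what should carry the day.
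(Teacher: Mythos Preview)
Your reduction to Theorem~\ref{texistence2} is circular. That theorem assumes $p\in\mathcal{L}^\infty_{loc}(S)$, whereas $(H_1)$ only puts $\varphi(\cdot,1)$ in $K_d^{loc}$, which is strictly weaker. You flag this yourself at the end, but your proposed fix---use the linear majorant $v$ together with compactness---cannot work as written, because $v$ is precisely the object you are invoking Theorem~\ref{texistence2} to produce. Without $v$ there is no supersolution to seed the monotone iteration, and without the iteration you have no entire solution in the bounded regime. The radial Cauchy problem you propose for the large regime has the same defect: under $(H_1)$ alone, $r\mapsto\varphi(r,t)$ need not even be locally bounded, and the standard ODE existence theory for $(Au')'=A\,\varphi(\cdot,u)$ is not available. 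A secondary gap: your necessity argument for bounded solutions assumes the given bounded solution is already radial with $u(0)=\alpha>0$; the statement allows an arbitrary bounded solution, and a radial $\L$-subharmonic $u$ may well have $u(0)=0$.

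The paper's route is quite different and sidesteps all of this. For the bounded case it appeals directly to the correspondence \eqref{oneone}: finiteness of $G(\varphi(\cdot,c_0))$, equivalent to $I\varphi(\cdot,c_0)<\infty$ via the Green estimates of Section~\ref{NAgroups}, already produces a bounded solution from a bounded harmonic function, with no reference to Theorem~\ref{texistence2}. For necessity it first passes from an arbitrary bounded solution $w$ to a radial one by taking $u=\lim_n U_{B_n}^\varphi c$ with $c\ge\sup w$, then picks $r_0$ with $u(r_0)>0$ and uses monotonicity of $u$ only on $[r_0,\infty)$. For the large case it constructs the entire radial solution by solving Dirichlet problems on balls $B_n$ with constant boundary data $\lambda_n$ chosen so that $U_{B_n}^\varphi\lambda_n(0)=\alpha$; the Harnack-type inequality (Theorem~\ref{Mainthe-Harnack-ineq}) is what forces $z(\lambda)\to\infty$ in Lemma~\ref{the-function-z}(c) and hence guarantees such $\lambda_n$ exist, and the concavity-based radial uniqueness of Corollary~\ref{coincide-from-origin-to-all-domain} makes $u_{n+1}|_{B_n}=u_n$, so the limit is automatically global. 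Your plan never invokes the Harnack inequality, and that is exactly the tool the paper uses in place of the $\mathcal{L}^\infty_{loc}$ hypothesis you are missing.
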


Theorems \ref{texistence2} and \ref{texistence} show that large solution and bounded non zero solutions cannot exist at the same time as it is already proved in \cite{D-G} for general elliptic operator.
Also, Theorems \ref{texistence2} and \ref{texistence} give a nice characterization of existence of bounded and large
solutions. Theorem \ref{texistence2} and \ref{texistence} follow directly from Theorems \ref{bounded-solution}, \ref{large-solution-in-radial-case}, \ref{largenoconcavity}. The techniques developed before in \cite{D-G}, \cite{Ghardallou2} allow us to go beyond nonlinearity being $t^{\gamma }$ or even concavity. 

Notice that sublinear $\psi $, \eqref{psi} satisfies the ``so called'' Keller-Osserman condition \cite{keller,Osserman}
\begin{equation}\label{KO}
 \int_{1}^{\infty} \Big[\int_{0}^{s} \psi(t) dt  \Big]^{-\frac{1}{2}} ds=\infty
\end{equation}
that somehow fits well into the large solution problem. {Namely for $p=1$ and $\psi$ positive continuous Keller \cite{keller} and Osserman \cite{Osserman} proved \eqref{KO} is a necessary and sufficient condition to have an entire spherically symmetric solution to \begin{equation}\label{ancientproblem}
	\Delta u - p \psi (u) =0.
	\end{equation} 
Lair \cite{Lair2007} extended this to the case where function $p$ is continuous positive spherically symmetric and $\psi $ is nonnegative continuous nondecreasing vanishing at origin, satisfying \eqref{KO}. They proved that \eqref{ancientproblem} has always an entire solution, but there is no such complete
 characterization of both cases (bounded or large) as we have here under the stronger assumption \eqref{psi}.}

The difference in 
$I\varphi(\cdot,c)$ for $\R^d$ \eqref{IvarphiR^d} and 
for $NA$ \eqref{IvarphiNA} is due to the properties 
of the fundamental solution $G$ to $\L $ on each space. 
More precisely, in both spaces 
$G$ is radial. On $\R^d$, 
$G(x)= a_d |x|^{-d+2}$ where $a_d$ is a constant depending only on the dimension. For $NA$, the precise estimate for $G$ is obtained in Theorems \ref{estimation-Green-r-big} and \ref{estimation-Green-small-r}. 
Using radiality on $NA$ group as it is usually done on $\R^d$, we obtain that 
\eqref{finitepotential} is equivalent to $I\varphi(\cdot,c)$ being finite for some $c$. 
Properties of harmonic $NA$ groups as well as of $G$ are discussed in Section 3.

To prove our results we need a Harnack-type inequality. It holds for general elliptic operators and it is proved in the last section.
More precisely, we have the following theorem
\begin{theorem}\label{Mainthe-Harnack-ineq}
	Let $L$ be a second order elliptic operator with smooth coefficients defined on a domain $\Omega \subset \R ^d$, $d\geq 3$, $L1\leq 0$. 
	Assume that $\varphi:\Omega\times[0,+\infty )\to[0,+\infty )$ is a positive function increasing with respect to the second variable satisfying 
	$$(H_1')~~\varphi(x,u)\leq p(x)(u+1), ~~where~~ p\in \mathcal{K}_d^{loc}(\Omega)~.
	\footnote{If ($H_3$) and ($H_4$) are satisfied then $\varphi (x,t)\leq t
		\varphi (x,1)$ for $t\geq 1$, which means that ($H_1$)-($H_4$) imply ($H'_1$).}
	$$
	
	Then for every compact set $R$ in $\Omega$ there exists a
	constant $C_R>0$ such that for every positive solution of the equation \begin{equation}\label{Main-problem}
	Lu -\varphi(\cdot,u)=0, \mbox{ in $\Omega$}
	\end{equation} we have
	$$\sup_{x\in R}u(x)\leq C_R (1+\inf_{x\in R}u(x)).$$
	
\end{theorem}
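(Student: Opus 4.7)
The plan is to reduce the problem to a classical Harnack-type inequality for linear elliptic equations whose zeroth-order coefficient lies in the local Kato class. First, I would set $v = u + 1$, so that $v \geq 1 > 0$ on $\Omega$. A direct computation yields $Lv = Lu + L1 = \varphi(\cdot, u) + L1$, so $v$ solves the linear equation $(L - W)v = 0$ with
$$
W(x) := \frac{\varphi(x, u(x)) + L1(x)}{v(x)}.
$$
Although $W$ depends on the unknown $u$, it is uniformly controlled: from $0 \leq \varphi(x, u) \leq p(x)(u+1) = p(x) v$ we obtain $0 \leq \varphi(x,u)/v \leq p(x)$, while on any compact $R' \subset \Omega$ containing $R$ in its interior, the smoothness of the coefficients of $L$ gives $\|L1\|_{L^\infty(R')} \leq M < \infty$. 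Since $v \geq 1$, this implies $|W| \leq p + M$ on $R'$, and the right-hand side belongs to $\mathcal{K}_d^{loc}(\Omega)$ by hypothesis $(H_1')$ (bounded functions trivially lie in the Kato class).

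Next I would invoke the Harnack inequality for positive solutions of uniformly elliptic equations $(L - W)v = 0$ with $W \in \mathcal{K}_d^{loc}$, as developed by Aizenman--Simon, Chiarenza--Fabes--Garofalo, and in the axiomatic potential-theoretic framework of \cite{BH}. This produces a constant $C_R > 0$, depending only on $R$, $R'$, the ellipticity data of $L$, and the Kato norm of $W$ over $R'$, such that
$$
\sup_{x \in R} v(x) \leq C_R \inf_{x \in R} v(x).
$$
The essential point is that the bound on the relevant Kato norm of $W$ depends only on $p$ and the coefficients of $L$, so $C_R$ is independent of the particular positive solution $u$. Translating back via $v = u + 1$ immediately gives $\sup_R u + 1 \leq C_R(\inf_R u + 1)$, hence $\sup_R u \leq C_R(1 + \inf_R u)$, possibly after enlarging $C_R$ to absorb the constant.

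The principal technical obstacle is ensuring that the Harnack constant in the cited linear theory depends only on the \emph{Kato norm} of the zeroth-order coefficient, rather than on pointwise data of the potential, so that the estimate is genuinely uniform over the family of positive solutions to \eqref{Main-problem}. This is classical but delicate; it rests on a Moser-type iteration adapted to Kato-class perturbations, or equivalently on Feynman--Kac estimates for the associated semigroup. Once this result is invoked in the right form, the linearization $v = u+1$ and the elementary bound on $W$ reduce our nonlinear problem to the linear setting, completing the proof.
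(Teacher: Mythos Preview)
Your linearization argument is correct and constitutes a genuinely different route from the paper. The paper works directly with the nonlinear equation: on a small ball $B$ it decomposes $u=H_Bu-G_B^L(\varphi(\cdot,u))$, applies the classical Harnack inequality to the $L$-harmonic part $H_Bu$, and controls the potential $G_B^L(\varphi(\cdot,u))$ by a $3G$-type estimate (Lemma~\ref{3-G theorem for elliptic operator}) together with the Kato smallness of $p$ on small balls; this yields $H_Bu(y)\le 2u(y)+2$, hence the local inequality (Theorem~\ref{generalized-Harnack-ineq}), and a chaining argument gives the global statement. Your approach instead absorbs the nonlinearity into a zeroth-order coefficient $W=(\varphi(\cdot,u)+L1)/(u+1)$, observes that $|W|\le p+\|L1\|_{L^\infty}$ uniformly in $u$, and then invokes the linear Harnack inequality for Schr\"odinger-type operators with Kato-class potentials (Aizenman--Simon, Chiarenza--Fabes--Garofalo, or in the present framework \cite{B.H.H}). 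The paper's argument is more self-contained and makes the mechanism transparent --- the $3G$ inequality and the absorption step $G_B^L(pH_Bu)\le\tfrac12 H_Bu$ are exactly where the Kato condition enters --- whereas your argument is shorter but offloads the essential work to a black-box linear result. Note that the dependence of the Harnack constant only on the Kato \emph{norm bound} (rather than on $W$ itself) is precisely what \cite{B.H.H} provides, so your ``principal technical obstacle'' is in fact covered by the literature already cited in the paper.
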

Theorem \ref{Mainthe-Harnack-ineq} was proved in \cite{B.H.H} for  
$\Delta $ and $\varphi
(x,u)=p(x)u^{\gamma }$, $0<\gamma <1$. It turns out that potential theory for $L$ and a
sublinearity of $\varphi $ are completely sufficient to follow their approach.

At last, the authors want to express
their gratitude to {Krzysztof Bogdan, Konrad Kolesko, Mohamed Selmi and Mohamed Sifi} for their helpful and
kindly suggestions.

\section{Bounded and large solution under radiality}\label{bounded-large-radial-case}\label{secbounded}


This section contains our main results about bounded and large solutions. Some  theorems and lemmas holds for general elliptic operators with smooth coefficients. So, as before, we will use for them notation $L$, while $\L $ is reserved for the Laplace Beltrami operator on $S$.

\subsection{Elliptic operators with smooth coefficients}

Let $\Omega $ be an open domain. We say that $\Omega $ is {\it Greenian}
if there is a function $G_{\Omega }(x,y)$ smooth on $ \Omega \times \Omega \setminus \{ (x,x): x\in
\Omega \}$ such that for every $y\in \Omega$
\begin{equation}\label{delta}
L G_{\Omega }(\cdot , y)=-\delta _y,\quad \mbox{in the sense of distributions,}
\end{equation}
and
\begin{equation}\label{potential}
G_{\Omega }(\cdot , y),\quad \mbox{is a potential},
\end{equation}
i.e. every positive $L $-harmonic function $h$ such that $h(x)\leq G_{\Omega }(x,y)$ is equal 0. We
write
$$
G_{\Omega }(\varphi (\cdot , c))(x)=\int _{\Omega }G_{\Omega }(x,y)\varphi (y , c)\ dy .$$

Bounded solutions to \eqref{Main-problem} for a second order elliptic operator $L$ with smooth coefficients were described in \cite{Ghardallou2}. 
Suppose that $L1\leq 0$ and 
$\varphi $ satisfies $(H_1), 
(H_2)$, $(H_3)$. Assume further that $\Omega\subset \mathbb{R}^d$ ($d\geq 3$) and 
{\begin{equation}\label{finitepotential}
\mbox{ there are}\ c>0, x\in S \ \mbox{such that}\ G_{\Omega }(\varphi (\cdot , c))(x)<\infty 
 \footnote{Then $G_{\Omega }(\varphi (\cdot , c))$ is finite at any point because
	$\varphi $ is locally in the Kato class.}.
\end{equation}}
 Then positive continuous
solutions of \eqref{Main-problem} bounded by $c$ are in one-to-one correspondence with positive
$L$-harmonic functions bounded by $c$ given by
\begin{equation}\label{oneone}
h=u+G_{\Omega}(\varphi(\cdot,u)), \hbox{ in }\Omega.
\end{equation}
For a bounded regular domain $\Omega $ and { $f\in C(\partial
	\Omega )$, $f\geq 0$} \eqref{oneone} becomes 
\begin{equation}\label{oneoner}
h_{\Omega}=u+G_{\Omega}(\varphi(\cdot,u)), \hbox{ in }\Omega.
\end{equation}
where $h_{\Omega }$ is  the $L$-harmonic function in $\Omega $
having $f$ as its boundary values \footnote{{ In a bounded regular domain
		$(H_1)$ implies that $G_{\Omega}(\varphi(\cdot,c))$ vanishes on $\partial \Omega $}}. We will write $u=U^{\varphi}_{\Omega }f$. 
\eqref{oneone} and \eqref{oneoner} allow us, in what follows, to go beyond $\varphi (x,t)=p(x)t^{\gamma }$.
\subsection{Bounded solutions for $\L $ under $(H_1)-(H_3)$.}
Existence of bounded solutions is characterized as follows: 
\begin{theorem}\label{bounded-solution}
	Suppose that $(H_1)-(H_3)$  are satisfied for $\L $ and $\varphi$ is radial with respect to
	the first variable. Then  \eqref{subproblem} has a nontrivial nonnegative
	bounded solution if and only if
	\begin{equation}\label{condbounded}
	\mbox{there exists}\  c_0>0\ \mbox{such that}\  I\varphi (\cdot , c_0)<\infty . \end{equation}
	Moreover, if \eqref{condbounded} is satisfied then there is a radial bounded solution.
\end{theorem}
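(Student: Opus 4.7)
The plan is to reduce the integral condition \eqref{condbounded} to the finiteness of a Green potential and then exhaust $S$ by balls, using the bijection between bounded solutions and bounded $\mathcal L$-harmonic functions provided by \eqref{oneone}--\eqref{oneoner}. On both $\R^d$ and on harmonic $NA$ the Green function $G$ of $\mathcal L$ is radial, with explicit (or, in the $NA$ case, precisely estimated by Theorems \ref{estimation-Green-r-big} and \ref{estimation-Green-small-r}) decay; on $\R^d$, $G(x,y)=a_d|x-y|^{-(d-2)}$. A direct passage to polar coordinates, using radiality of $\varphi$ in its first variable, then shows that for every $c>0$ the quantities $G(\varphi(\cdot,c))(0)$ and $I\varphi(\cdot,c)$ are comparable, so \eqref{condbounded} is equivalent to \eqref{finitepotential} with $\Omega=S$.

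\textbf{Sufficiency.} Assume $I\varphi(\cdot,c_0)<\infty$. Since $\varphi(y,t)\to 0$ as $t\to 0^+$ by $(H_2)$--$(H_3)$ and $\varphi(\cdot,t)\le\varphi(\cdot,c_0)$, dominated convergence yields $G(\varphi(\cdot,t))(0)\to 0$ as $t\to 0^+$, so one may fix $c\in(0,c_0]$ with $G(\varphi(\cdot,c))(0)<c$. Exhaust $S$ by concentric balls $B_n$ and use \eqref{oneoner} on each $B_n$ to produce the solution $v_n$ with constant boundary datum $c$; by the uniqueness in \eqref{oneoner} and the symmetry of $\mathcal L$ and $\varphi$, $v_n$ is radial. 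A standard weak-maximum-principle argument applied to $v_n-v_{n+1}$ on the set $\{v_n<v_{n+1}\}\subset B_n$ (where $\mathcal L(v_n-v_{n+1})\le 0$) yields $v_n\ge v_{n+1}$, so $v_n\searrow v$ for some $0\le v\le c$. Dominated convergence with dominating function $G(x,\cdot)\varphi(\cdot,c)$, integrable by the first step, in $c=v_n+G_{B_n}(\varphi(\cdot,v_n))$ gives $c=v+G(\varphi(\cdot,v))$; by \eqref{oneone} $v$ is a radial continuous solution of \eqref{subproblem}, and $v(0)\ge c-G(\varphi(\cdot,c))(0)>0$ shows it is nontrivial.

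\textbf{Necessity.} Conversely, let $u$ be any bounded nontrivial nonnegative solution, $u\le c_0$. Repeat the above exhaustion with boundary datum $c_0$; existence of the radial $v_n$ on each $B_n$ does not require \eqref{condbounded}, since $\varphi(\cdot,c_0)\in K_d^{loc}$ makes \eqref{finitepotential} automatic on bounded regular domains. Comparison of $u$ with $v_n$ (now $v_n=c_0\ge u$ on $\partial B_n$) yields $v_n\ge u$ on $B_n$, and again $v_n\searrow v$, with $v\ge u$ nontrivial. Fatou's lemma applied to $c_0=v_n+G_{B_n}(\varphi(\cdot,v_n))$ gives $v+G(\varphi(\cdot,v))\le c_0$, whence $G(\varphi(\cdot,v))<\infty$; local interior regularity (the right-hand sides $\varphi(\cdot,v_n)$ being bounded in $K_d^{loc}$ by $\varphi(\cdot,c_0)$) then lets one pass to the limit in $\mathcal L v_n=\varphi(\cdot,v_n)$ and conclude that $v$ solves \eqref{subproblem}. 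Since $v$ is radial and $\mathcal L v\ge 0$, the integrated radial equation forces $r\mapsto v(r)$ to be nondecreasing; nontriviality supplies some $r_*>0$ with $c:=v(r_*)>0$, and hence $v(r)\ge c$ for every $r\ge r_*$. The radial reduction of the first step combined with this lower bound yields, on $\R^d$,
\[
\int_{r_*}^{\infty} r\,\varphi(r,c)\,dr \;\le\; \int_{r_*}^{\infty} r\,\varphi(r,v(r))\,dr \;\le\; C\,G(\varphi(\cdot,v))(0) \;<\; \infty,
\]
and analogously without the factor $r$ on $NA$; together with local integrability from $(H_1)$, this is exactly $I\varphi(\cdot,c)<\infty$.

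The main technical difficulty is the passage to the limit in the necessity argument: dominated convergence is unavailable there because \eqref{condbounded} is precisely what we are trying to prove. The remedy is to combine Fatou's lemma with the uniform Kato bound $\varphi(\cdot,v_n)\le\varphi(\cdot,c_0)\in K_d^{loc}$ and local potential-theoretic regularity, following \cite{Ghardallou2}, to promote the monotone limit $v$ to a genuine solution; once this is done, monotonicity of the radial profile $v(r)$ and the Step 1 reduction deliver the integral condition immediately.
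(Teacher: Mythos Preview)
Your proof is correct and follows essentially the same route as the paper: both directions rest on the equivalence $I\varphi(\cdot,c)<\infty \Leftrightarrow G(\varphi(\cdot,c))(0)<\infty$ (the paper's Remark~\ref{remark30}), the exhaustion by balls $B_n$ with the radial solutions $U_{B_n}^\varphi c$, comparison with the given solution, and the monotonicity of radial $\mathcal L$-subharmonic functions to extract a level $c=v(r_*)>0$ dominating the integrand at infinity. Your treatment is somewhat more explicit than the paper's---you spell out the choice of small $c$ via dominated convergence in the sufficiency step and use Fatou in the necessity step where the paper simply invokes the representation $h_u=u+G(\varphi(\cdot,u))$ from \eqref{oneone}---but these are cosmetic differences, not a different strategy.
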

\begin{remark}
	For $\L =\Delta $,  $\varphi (x,t)=p(x)t^{\gamma }$ and $p\in L^{\infty}_{loc}(\mathbb{R}^d)$ {(locally in $L^{\infty }(\mathbb{R}^d)$)} Theorem
	\ref{bounded-solution} was proved in \cite{ElmabroukHansen}. See also \cite{LairWood2000} for H\"older
	continuous $p$.
\end{remark}
\begin{remark}\label{remark30}
	Let $G$ be the fundamental solution to $\mathcal L$. Notice that $I\varphi (\cdot , c_0)<\infty$ is equivalent to $ G ( \varphi(\cdot,c_0))<\infty$ both
	on Euclidean and harmonic space, which follows from writing the volume element in radial
	coordinates, see \eqref{volume}. Indeed, on $NA$,
	\begin{equation*}
	\begin{aligned}\label{Greenident}
	G(\varphi(\cdot,c_0))(0)  &= \int_{NA} G(0,s )\varphi(s, c_0)\,dm_L(s)\\
	&= \int_0^{+\infty} G(r) \varphi(r,c_0) 2^{p+q}\sinh(\frac{r}{2})^{p+q}\cosh(\frac{r}{2})^q\,dr
	\end{aligned}
	\end{equation*}
	and in view of Theorem \ref{estimation-Green-r-big}, $G(r)\approx e^{-Qr}$ as $r\to \infty $.
	Therefore, the last inequality is finite iff $I\varphi (\cdot ,c_0)<\infty $. For $\R ^d $ we use
	estimates for the fundamental solution and we proceed in the same way.
\end{remark}


\begin{proof}[Proof of Theorem \ref{bounded-solution}]
	Sufficiency of \eqref{condbounded} follows directly from \eqref{oneone} and  Remark \eqref{remark30}. So it remains to prove its necessity. We will do it for $NA$. For the Euclidean space it goes in the same way: one has to use the formula for the fundamental solution there.
	
	Suppose that \eqref{subproblem} has a nontrivial nonnegative bounded solution $w$ and let
	\begin{equation}\label{ball}
	B_n=\{s\in NA: d(s,0)<n\}. \end{equation} By Lemma \ref{radiality-in-bounded}, 
	for every $c>0$, $U_{B_n}^{\varphi}c$ is a
	radial solution of \eqref{subproblem} in $B_n$. Also if $\displaystyle {c\geq \sup_{NA} w}$, then
	$u=\lim\limits_{n\to +\infty} U_{B_n}^{\varphi}c$ is a nontrivial  solution. Moreover, $u$ is $\L$-subharmonic
	radial in $NA$, hence, by the maximum principle for elliptic operators, it follows that 
	\begin{equation}\label{subharmonic}
	u(x)\leq u(x_0),\quad \mbox{if}\ \ d(x,0)\leq d(x_0,0).\end{equation}
	We fix $r_0$ such that $u(r_0)>0$. Then $$0\leq \varphi(r,u(r_0))   \leq \varphi(r,u(r)),\
	\hbox{ for }\ r\geq r_0.$$
	
	In addition, $h_u=\lim\limits_{n\to +\infty} H_{D_n}u$ is a positive bounded $\L$-harmonic function
	such that
	$$h_u=u+G(\varphi(\cdot,u)), \hbox{ in }NA.$$
	In particular,
	\begin{equation*}
	\begin{aligned}
	G(\varphi (\cdot , u))(0)    &= \int_{NA} G(0, s)\varphi(s, u(s))\,dm_L(s)\\
	&= \int_0^{+\infty} G(r) \varphi(r,u(r)) 2^{p+q}\sinh(\frac{r}{2})^{p+q}\cosh(\frac{r}{2})^q\,dr .
	\end{aligned}
	\end{equation*}
	Consequently, $$ \int_{r_0}^{+\infty} G(r) \varphi(r,u(r_0))
	2^{p+q}\sinh(\frac{r}{2})^{p+q}\cosh(\frac{r}{2})^q\,dr<\infty .$$ And by the estimate $G(r)\approx
	e^{-Qr}$ for large $r$ and ($H_1$), the conclusion follows with $c_0=u(r_0)$.

	
	
	
	

\end{proof}

\subsection{Large solutions under $(H_1)-(H_4)$}
Now we assume that $\varphi$ is concave with respect to the
second variable and we characterize existence of large solutions. The theorem is as follows

\begin{theorem}\label{large-solution-in-radial-case}
	Suppose that $(H_1)-(H_4)$ are satisfied and $\varphi$ is radial with respect to the first variable. Then  there exists a large solution to \eqref{subproblem}
	if and only if 
	\begin{equation}\label{condlarge}
	\mbox{ for every} \ c>0, \ I\varphi (\cdot , c)=\infty .
	\end{equation}
	Moreover, if \eqref{condlarge} is satisfied then there is a radial large solution.
\end{theorem}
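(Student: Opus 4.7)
\textbf{Necessity} I would derive directly from results already in this paper together with the non-coexistence statement of \cite{D-G}: if \eqref{subproblem} admits a large solution, then by \cite{D-G} there is no nontrivial bounded nonnegative solution, so Theorem~\ref{bounded-solution} forces \eqref{condbounded} to fail, which is precisely $I\varphi(\cdot,c)=\infty$ for every $c>0$.

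For \textbf{sufficiency}, the plan is to solve a radial initial value problem and show that its global solution is automatically unbounded under the hypothesis, hence is the sought-after radial large solution. Writing the volume element on $S$ in radial coordinates as $A(r)\,dr$ (so $A(r)=r^{d-1}$ on $\R^d$ and $A(r)=2^{p+q}\sinh(r/2)^{p+q}\cosh(r/2)^q$ on $NA$), a radial $u$ satisfies $\L u(r)=\frac{1}{A(r)}(A(r)u'(r))'$. Hence, with smooth matching at the origin, the radial form of \eqref{subproblem} is equivalent to
$$u(r)=c+\int_0^r\frac{1}{A(t)}\int_0^t A(s)\,\varphi(s,u(s))\,ds\,dt$$
for some prescribed $c=u(0)>0$. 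Concavity $(H_4)$ together with $(H_3)$ forces $\varphi(x,t)\le(1+t)\varphi(x,1)$ on $[0,\infty)$ (as already noted in the footnote to Theorem~\ref{Mainthe-Harnack-ineq}), while $(H_1)$ makes $A(s)\varphi(s,1)$ locally integrable in $s$; a monotone Picard iteration combined with a Gronwall-type bound then produces a unique continuous, nondecreasing, globally defined solution $u$.

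Having obtained $u$, I would then check it is large. Since $u$ is nondecreasing with $u(0)=c$, $(H_2)$ gives $\varphi(s,u(s))\ge\varphi(s,c)$, hence
$$u(r)-c\ \ge\ \int_0^r\frac{1}{A(t)}\int_0^t A(s)\,\varphi(s,c)\,ds\,dt.$$
If $u$ remained bounded, the right-hand side would stay finite as $r\to\infty$. A Fubini swap rewrites it as $\int_0^\infty A(s)\varphi(s,c)\bigl(\int_s^\infty A(t)^{-1}\,dt\bigr)\,ds$, which on $\R^d$ equals $(d-2)^{-1}\int_0^\infty s\,\varphi(s,c)\,ds$, and on $NA$, using $A(s)\sim e^{Qs}$ and $\int_s^\infty A(t)^{-1}\,dt\sim Q^{-1}e^{-Qs}$ as $s\to\infty$ together with integrability near zero from $(H_1)$, is comparable to $\int_0^\infty\varphi(s,c)\,ds$. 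Both quantities equal $I\varphi(\cdot,c)=+\infty$ by hypothesis, a contradiction; so $u(r)\to\infty$, proving that $u$ is a radial large solution.

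The \textbf{main obstacle} is the Fubini identification in the $NA$ case, where $A$ has two distinct regimes (polynomial near $0$, exponential near $\infty$): one must split the integral and verify separately that the singular contribution at the origin is harmless (via the Kato condition) and that the tail exactly reproduces the divergence of $I\varphi(\cdot,c)$ in the sense of \eqref{IvarphiNA}. This is the same computation that lies behind Remark~\ref{remark30}. Global existence of the radial IVP is routine given the concavity-induced sublinear bound, and necessity is immediate from Theorem~\ref{bounded-solution} and \cite{D-G}.
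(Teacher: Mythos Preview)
Your necessity argument coincides with the paper's. For sufficiency, however, your route is genuinely different. The paper does \emph{not} solve the radial ODE directly. Instead it builds the solution PDE-theoretically: on each ball $B_n$ it invokes Lemma~\ref{the-function-z} (whose part (c) uses the Harnack-type inequality of Theorem~\ref{Mainthe-Harnack-ineq}) to select a boundary constant $\lambda_n$ so that $u_n=U_{B_n}^{\varphi}\lambda_n$ satisfies $u_n(0)=\alpha$; then Corollary~\ref{coincide-from-origin-to-all-domain} (the radial uniqueness lemma, which is where concavity $(H_4)$ enters via Lemma~\ref{comparaison-under-radiality}) forces $u_n=u_{n+1}$ on $B_n$, so the $u_n$ cohere into a global radial solution $u$. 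Largeness is then obtained indirectly: Theorem~\ref{bounded-solution} rules out bounded nontrivial solutions, hence $u$ is unbounded, and subharmonicity plus radiality makes $u$ nondecreasing.

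Your ODE/integral-equation approach is more self-contained: it bypasses the Harnack inequality and the shooting argument entirely, and it yields an explicit quantitative lower bound $u(r)-c\ge \int_0^r A(t)^{-1}\int_0^t A(s)\varphi(s,c)\,ds\,dt$ rather than appealing back to Theorem~\ref{bounded-solution}. Concavity is used only for the sublinear bound needed in the global-existence step. The paper's approach, by contrast, reuses the PDE machinery developed for the non-radial parts of the theory and makes explicit that a radial solution with \emph{any} prescribed value at the origin can be produced. Both arguments are correct; yours is shorter for this particular statement, while the paper's is better integrated with Lemmas~\ref{the-function-z} and~\ref{comparaison-under-radiality}, which are needed elsewhere anyway.
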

\begin{remark}
	As before, $I\varphi(\cdot ,c)=\infty$ is equivalent to \newline $ G (
	\varphi(\cdot,c))(0) =+\infty$, which implies that $G\varphi (\cdot , c)$ is identically infinity.
\end{remark}
\begin{remark}
	For $\L =\Delta $, $\varphi (x,t)=p(x)t^{\gamma }$ and $p\in L^{\infty }_{loc}(\mathbb{R}^d)$
	Theorem \ref{large-solution-in-radial-case} was proved in \cite{ElmabroukHansen}. See also
	\cite{LairWood2000} for continuous $p$. As before, the results of \cite{Ghardallou2} and \cite{D-G} allow us, in what follows, to go beyond $\varphi (x,t)=p(x)t^{\gamma }$.
	
\end{remark}

The main ingredients of the proof of Theorem \ref{large-solution-in-radial-case}
are Theorem \ref{existence1}, below, see \cite{D-G} and Lemma \ref{the-function-z}. For the latter we need a  Harnack type inequality which is proved in the last section. Both Theorem \ref{existence1} and Lemma \ref{the-function-z} hold for general elliptic operators.


\begin{theorem}(see \cite{D-G})\label{existence1}
	Let $L$ be a second order elliptic operator with smooth coefficients defined on a Green domain
	$\Omega \subset \mathbb{R}^d$, $d\geq 3$, satisfying $L1=0$. Suppose that $(H_1)-(H_4)$ hold and
	that there is a bounded solution to
	$$
	L u(x) - \varphi (x, u(x))=0.
	$$
	Then there is no large solution.
\end{theorem}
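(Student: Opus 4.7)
The plan is to argue by contradiction: suppose a nontrivial bounded nonnegative solution $u$ of $Lw-\varphi(\cdot,w)=0$ exists and, at the same time, a large solution $v$ of the same equation. The main idea is to use the concavity in $(H_4)$ to produce supersolutions by dilating $u$, and then invoke a comparison based on the maximum principle. Since $\varphi(x,0)=0$ by $(H_3)$, concavity in $(H_4)$ forces $t\mapsto \varphi(x,t)/t$ to be nonincreasing on $(0,\infty)$, so $\varphi(x,\lambda t)\leq \lambda\varphi(x,t)$ for every $\lambda\geq 1$ and $t>0$. Because $L$ is linear and $Lu=\varphi(\cdot,u)$,
$$L(\lambda u)-\varphi(\cdot,\lambda u)=\lambda\varphi(\cdot,u)-\varphi(\cdot,\lambda u)\geq 0,$$
so $\lambda u$ is a smooth supersolution of \eqref{Main-problem} for every $\lambda\geq 1$.

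Next I would compare $\lambda u$ and $v$ on the open set $A_\lambda:=\{x\in\Omega:\lambda u(x)>v(x)\}$. Since $u$ is bounded by some $M$ while $v$ is large, $A_\lambda\subset\{v\leq \lambda M\}$, and this sublevel set is relatively compact in $\Omega$; consequently $\partial A_\lambda\subset\Omega$ and, by continuity, $\lambda u-v=0$ on $\partial A_\lambda$. On $A_\lambda$ the monotonicity in $(H_2)$ yields
$$L(\lambda u-v)\geq \varphi(\cdot,\lambda u)-\varphi(\cdot,v)\geq 0,$$
and since $L1=0$ the weak maximum principle gives $\sup_{A_\lambda}(\lambda u-v)\leq \sup_{\partial A_\lambda}(\lambda u-v)=0$. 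This forces $A_\lambda=\emptyset$, hence $v\geq \lambda u$ on $\Omega$ for every $\lambda\geq 1$. Choosing $x_0\in\Omega$ with $u(x_0)>0$ (possible since $u$ is nontrivial and nonnegative) and letting $\lambda\to\infty$ in $v(x_0)\geq \lambda u(x_0)$ gives $v(x_0)=\infty$, contradicting $v$ being a continuous finite-valued solution.

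The main obstacle is the topological step guaranteeing that $A_\lambda$ is relatively compact, i.e.\ that the sublevel sets $\{v\leq c\}$ are compactly contained in $\Omega$. For $\Omega=\R^d$ or for the harmonic $NA$ group the condition $v(x)\to\infty$ as $d(x,0)\to\infty$ makes this immediate, but in a general Greenian domain the notion of ``large'' must be tied to the Martin boundary of $L$ so that $v$ blows up along every approach to $\partial\Omega$. Once this compactness is in hand, the remainder is a clean concavity-plus-maximum-principle comparison requiring no further potential-theoretic machinery beyond $L1=0$.
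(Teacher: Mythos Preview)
The paper does not prove this theorem; it is simply quoted from \cite{D-G}, so there is no in-paper argument to compare against directly. Your proof is correct and is the expected one: concavity together with $\varphi(\cdot,0)=0$ makes $\lambda u$ a supersolution for every $\lambda\geq 1$, and comparison with the large solution $v$ on the relatively compact set $A_\lambda=\{\lambda u>v\}$ forces $v\geq\lambda u$ everywhere, which is absurd as $\lambda\to\infty$ at a point where $u>0$. This dilation-via-concavity device is exactly the mechanism the present paper uses in the proof of Lemma~\ref{comparaison-under-radiality}, so your approach is fully consonant with the paper and presumably with \cite{D-G}.

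Two minor remarks. First, solutions here are only continuous with $Lu\in L^1_{\mathrm{loc}}$, not smooth, so rather than invoking the classical weak maximum principle on $A_\lambda$ you should appeal to the distributional comparison principle, Lemma~\ref{comparaison-semi-elliptic}: on $A_\lambda$ one has $Lv-\varphi(\cdot,v)=0\leq L(\lambda u)-\varphi(\cdot,\lambda u)$ and $v=\lambda u$ on $\partial A_\lambda$, hence $v\geq\lambda u$ in $A_\lambda$, a contradiction. Second, your self-identified concern about the meaning of ``large'' on a general Greenian $\Omega$ is legitimate in principle, but every application in this paper has $\Omega$ equal to $\R^d$ or a harmonic $NA$ group, where ``large'' means blow-up as $d(x,0)\to\infty$ and the sublevel sets $\{v\leq c\}$ are automatically compact; no Martin boundary considerations are needed here.
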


\begin{proof}[Proof of Theorem \ref{large-solution-in-radial-case}]
	Necessity of \ref{condlarge} follows from Theorem \ref{bounded-solution}. In fact, by Theorem
	\ref{bounded-solution}, if there exists $c_0>0$ such that $I\varphi(\cdot ,c_0) < \infty$ then
	\eqref{subproblem} has a nontrivial nonnegative bounded solution in $S$. We conclude by the
	Theorem \ref{existence1}, that there is no large solution of \eqref{subproblem}.
	
	Now let us focus on sufficiency. We assume $I\varphi(\cdot ,c) =\infty$ for
	every $c>0$ and we prove that for every $\alpha>0$ there exists $u$ nonnegative radial solution of
	\eqref{subproblem} such that $u(0)=\alpha$ and $\lim\limits_{d(s)\to\infty}u(s)=+\infty.$
	
	Let $z_n$ be defined as in Lemma \ref{the-function-z} for $B_n$, see \eqref{ball}. So there exists $\nu>0$ such that
	$z_n(\nu)=\alpha$.
	Let $\lambda_n=\inf \{\lambda>0, z_n(\lambda)=\alpha\}$ and $u_n=U_{B_n}^\varphi\lambda_n$. As a
	result, for every $n\in\mathbb{N}$, $u_n(0)=\alpha=u_{n+1}(0)$. Also $u_n, u_{n+1}$ are both
	continuous radial solutions in $B_n.$ By Corollary \ref{coincide-from-origin-to-all-domain},
	$u_n=u_{n+1}$ in $B_n$. We conclude that $u=\lim\limits_{n\to +\infty } u_n$ is also a radial
	solution and $u(0)=\alpha.$
	
	Now it is remains to prove that $\lim\limits_{d(s)\to\infty}u(s)=+\infty.$ According to Theorem
	\ref{bounded-solution}, if for every $c>0$, $I\varphi(\cdot ,c) =\infty$ then there is
	no ``bounded nontrivial solution''. Though, $u$ is nontrivial, so $u$ is unbounded. Also, $u$ is
	radial $\L$-subharmonic, hence by \eqref{subharmonic},
	$\lim\limits_{d(s)\to\infty}u(s)=+\infty.$
\end{proof}

\subsection{Useful Lemmas}
This section contains a few lemma used already in the proof of Theorems \ref{bounded-solution} and \ref{large-solution-in-radial-case}. We begin with a comparison principle. For
$L=\Delta $ it was proved in \cite{ELMabrouk2004}, the general case is similar, see
\cite{Ghardallou2}. { $\mathcal{L}^1_{loc}(\Omega)$ denotes the space of functions locally integrable on $\Omega $.}
\begin{lemma}\label{comparaison-semi-elliptic}
	\vspace{0.2cm} Let $L$ be an elliptic operator with smooth coefficients defined in $\Omega $. Suppose that $\varphi $ satisfies ($H_2$). Let $u,v \in \mathcal{C}(\Omega)$ such
	that $Lu,Lv\in{\mathcal{L}}^1_{loc}(\Omega)$. If $$
	L u-\varphi(\cdot,u)\leq L v-\varphi(\cdot,v), \hbox{in $\Omega$},$$
	in the sense of distributions and
	$$ \liminf\limits_{\underset{y\in\partial \Omega}{x\to y}}{(u-v)(x)}\geq0.
	$$
	Then:$$u-v\geq0\,\,in\,\,\Omega.$$
	
\end{lemma}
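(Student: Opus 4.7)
The plan is to argue by contradiction, applying a weak minimum principle to $w := u - v$ on a sub-level set that stays strictly away from $\partial\Omega$.

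First I would rewrite the distributional hypothesis in the form
$$Lw \leq \varphi(\cdot, u) - \varphi(\cdot, v) \quad \text{in } \Omega,$$
where both sides lie in $\mathcal{L}^1_{loc}(\Omega)$. Assume toward a contradiction that $w(x_0) < 0$ for some $x_0 \in \Omega$. Since $w$ is continuous and $\liminf_{x \to y \in \partial\Omega} w(x) \geq 0$, for some sufficiently small $\epsilon > 0$ the open sub-level set
$$\Omega_\epsilon := \{x \in \Omega : w(x) < -\epsilon\}$$
is nonempty, its closure is a compact subset of $\Omega$ bounded away from $\partial\Omega$, and $w \equiv -\epsilon$ on $\partial\Omega_\epsilon$.

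Next, on $\Omega_\epsilon$ we have $u < v$ pointwise, so the monotonicity hypothesis $(H_2)$ gives $\varphi(\cdot, u) \leq \varphi(\cdot, v)$ there. Inserting this into the distributional inequality yields $Lw \leq 0$ on $\Omega_\epsilon$. Using the standing sign condition $L 1 \leq 0$, this upgrades to $L(w + \epsilon) = Lw + \epsilon \, L 1 \leq 0$ on $\Omega_\epsilon$, while $w + \epsilon \equiv 0$ on $\partial\Omega_\epsilon$. The weak minimum principle applied to $w + \epsilon$ on the relatively compact open set $\Omega_\epsilon$ then forces $w + \epsilon \geq 0$ throughout $\Omega_\epsilon$, contradicting the definition of $\Omega_\epsilon$ and giving $w \geq 0$ on $\Omega$.

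The delicate point is that $Lw$ is only known as a distribution in $\mathcal{L}^1_{loc}(\Omega)$, so the pointwise $C^2$-minimum principle does not apply directly to $w$. I would handle this by a standard mollification: smooth $w$ by convolution against a compactly supported mollifier, apply the classical weak minimum principle to the regularized function (using ellipticity and $L 1 \leq 0$), and pass to the limit through uniform convergence on $\overline{\Omega_\epsilon}$ and $\mathcal{L}^1_{loc}$-convergence of $Lw$. As an alternative more in line with the paper's potential-theoretic framework, one can use the Riesz decomposition $w + \epsilon = h + G_{\Omega_\epsilon}(-L(w+\epsilon))$ on $\Omega_\epsilon$, where $h$ is the $L$-harmonic function with boundary value $0$ (hence $h \equiv 0$) and the Green potential is nonnegative since $-L(w+\epsilon) \geq 0$.
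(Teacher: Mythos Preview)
The paper does not actually supply a proof of this lemma; it simply records that the case $L=\Delta$ is in \cite{ELMabrouk2004} and that the general case is similar, referring to \cite{Ghardallou2}. Your sublevel-set argument is precisely the standard route those references take: reduce to showing that the continuous function $w=u-v$, which satisfies $Lw\le 0$ on $\Omega_\epsilon$ once $(H_2)$ has killed the nonlinear right-hand side, cannot dip below its boundary value on a compactly contained open set.

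Two remarks are worth making. First, you invoke $L1\le 0$ as a ``standing sign condition''. This is indeed needed for the conclusion (without it the lemma is false, e.g.\ $L=\partial_x^2+2$ on $(0,\pi)$ with $\varphi\equiv 0$, $u=-\sin x$, $v=0$), and while the printed lemma omits it, the paper assumes it throughout Section~\ref{secbounded}, so your reading is correct. Second, your mollification sketch is not quite right as stated: convolution does not commute with a variable-coefficient $L$, so $L(w*\rho_\delta)\neq (Lw)*\rho_\delta$ in general, and one cannot directly feed $w*\rho_\delta$ into the classical $C^2$ minimum principle. Your alternative via the Riesz decomposition---equivalently, the potential-theoretic fact that a continuous $w$ with $Lw\in L^1_{loc}$ and $Lw\le 0$ distributionally is $L$-superharmonic in the mean-value sense, whence the minimum principle on the relatively compact set $\Omega_\epsilon$ applies---is the clean way to close the argument and is what the cited references do.
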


\begin{lemma}
	\label{the-function-z}
	Let $L$ be an elliptic operator with smooth coefficients defined in a
	bounded regular domain $D$, $x_0\in D$ and suppose that $\varphi$ satisfies $(H_1)-(H_3)$. For $x_0\in D$, $\lambda \in \R _+$ we define 
	$$
	z(\lambda )=U_D^{\varphi }\lambda (x_0).$$
	Then
	\begin{itemize}
		\item[(a)] $z(0)=0.$
		
		\item[(b)] For every $\lambda\geq \nu\geq 0$, $$0\leq z(\lambda)-z(\nu)\leq \lambda-\nu.$$

		\item[(c)] {We suppose in addition that $\varphi$ satisfies $(H_4)$ (resp. $(H'_1)$) then }
		$\lim\limits_{\lambda\to+\infty}z(\lambda)=+\infty.$
	\end{itemize}
\end{lemma}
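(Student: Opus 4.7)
The plan is to treat the three parts separately: (a) is immediate from $(H_3)$ and uniqueness, (b) follows from two applications of Lemma \ref{comparaison-semi-elliptic}, and (c) is the substantive claim where the Harnack inequality (Theorem \ref{Mainthe-Harnack-ineq}) does the essential work.

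For (a), $(H_3)$ gives $\varphi(\cdot, 0) \equiv 0$, so $u \equiv 0$ solves $Lu - \varphi(\cdot, u) = 0$ in $D$ with zero boundary data; uniqueness via Lemma \ref{comparaison-semi-elliptic} yields $U_D^\varphi 0 \equiv 0$. For (b), set $u_\lambda = U_D^\varphi \lambda$ and $u_\nu = U_D^\varphi \nu$ with $\lambda \geq \nu$. The lower bound $u_\lambda \geq u_\nu$ follows directly from the comparison principle, since $Lu_\lambda - \varphi(\cdot, u_\lambda) = 0 = Lu_\nu - \varphi(\cdot, u_\nu)$ while $u_\lambda - u_\nu = \lambda - \nu \geq 0$ on $\partial D$. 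For the upper bound I would compare $u_\lambda$ with $w := u_\nu + (\lambda - \nu)$: using $L1 \leq 0$ and the monotonicity $(H_2)$ of $\varphi$ in the second variable, $w$ satisfies $Lw - \varphi(\cdot, w) \leq 0 = Lu_\lambda - \varphi(\cdot, u_\lambda)$ with matching boundary values, so Lemma \ref{comparaison-semi-elliptic} gives $w \geq u_\lambda$, i.e. $u_\lambda - u_\nu \leq \lambda - \nu$.

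For (c), I argue by contradiction, supposing $u_\lambda(x_0) \leq C$ for every $\lambda$. Note first that $(H_4)$ implies $(H_1')$ with $p = \varphi(\cdot, 1)$: concavity plus $\varphi(x, 0) = 0$ forces $t \mapsto \varphi(x, t)/t$ to be non-increasing, hence $\varphi(x, t) \leq \varphi(x, 1)(t + 1)$. Comparing with the supersolution $\lambda$ (valid since $L\lambda - \varphi(\cdot, \lambda) \leq \lambda L1 - \varphi(\cdot, \lambda) \leq 0$) yields the crude bound $u_\lambda \leq \lambda$ in $D$. Theorem \ref{Mainthe-Harnack-ineq} then upgrades the pointwise bound at $x_0$ to $\sup_K u_\lambda \leq M_K := C_K(1 + C)$ for every compact $K \subset D$ containing $x_0$. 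Writing $h_\lambda = \lambda h_1$ for the $L$-harmonic function in $D$ with boundary value $\lambda$ (where $h_1 > 0$ corresponds to boundary $1$), the representation formula evaluated at $x_0$ combined with the split of the Green integral over $K$ and $D \setminus K$ gives
\begin{equation*}
\lambda h_1(x_0) - u_\lambda(x_0) = G_D(\varphi(\cdot, u_\lambda))(x_0) \leq G_D(\varphi(\cdot, M_K))(x_0) + (\lambda + 1) G_D(p \chi_{D \setminus K})(x_0),
\end{equation*}
where on $D \setminus K$ I used $\varphi(\cdot, u_\lambda) \leq p(u_\lambda + 1) \leq p(\lambda + 1)$. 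Dividing by $\lambda$ and letting $\lambda \to \infty$ yields $h_1(x_0) \leq G_D(p \chi_{D \setminus K})(x_0)$; finally, exhausting $D$ by an increasing sequence of compacts and using dominated convergence (with $G_D(p) < \infty$ since $p$ lies in the Kato class on $D$) drives the right-hand side to zero, contradicting $h_1(x_0) > 0$.

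The main obstacle is this last step of (c). Harnack's inequality controls $u_\lambda$ only on compact subsets of $D$, so the behaviour of $u_\lambda$ near $\partial D$ is a priori uncontrolled; the trick is to absorb the boundary tail by combining the crude global bound $u_\lambda \leq \lambda$ with the sublinear bound from $(H_1')$, which produces a factor of $\lambda$ on $D \setminus K$ that is killed upon division by $\lambda$, leaving only the quantity $G_D(p \chi_{D \setminus K})(x_0)$ that can be made arbitrarily small by exhausting $D$.
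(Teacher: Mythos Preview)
Your treatment of (a) and (b) is correct and close to the paper's. For the upper bound in (b) the paper does not build the supersolution $u_\nu+(\lambda-\nu)$ but simply subtracts the two integral representations $u_\mu = h_\mu - G_D(\varphi(\cdot,u_\mu))$ and uses monotonicity of $\varphi$; both arguments are short and essentially equivalent.

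Part (c) is where you diverge from the paper, and your route is genuinely different. Under $(H_4)$ the paper gives a three-line direct argument: concavity together with $\varphi(\cdot,0)=0$ yields $\varphi(\cdot,\lambda t)\le \lambda\varphi(\cdot,t)$ for $\lambda\ge 1$, so $\lambda\,U_D^\varphi 1$ is a subsolution with boundary value $\lambda$, hence $\lambda\,U_D^\varphi 1 \le U_D^\varphi \lambda$ by Lemma~\ref{comparaison-semi-elliptic}. Picking $y$ with $U_D^\varphi 1(y)>0$ and applying the Harnack-type inequality (Theorem~\ref{Mainthe-Harnack-ineq}) at the pair $(y,x_0)$ gives $\lambda\,U_D^\varphi 1(y)\le c(z(\lambda)+1)$, forcing $z(\lambda)\to\infty$. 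Under $(H_1')$ the paper does not argue directly but invokes an external result to dominate $\varphi$ by some concave $\varphi_1$, reducing to the $(H_4)$ case via $U_D^{\varphi_1}\lambda\le U_D^{\varphi}\lambda$. Your contradiction argument has the merit of treating both hypotheses uniformly through $(H_1')$ and of being self-contained, at the cost of being longer and more delicate: you need the representation formula, the splitting over $K$ and $D\setminus K$, and an exhaustion. The paper's $(H_4)$ proof, by contrast, exploits concavity in one sharp inequality and never looks near $\partial D$.

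One technical point to watch in your argument: you invoke $G_D(p)(x_0)<\infty$ to run dominated convergence on $G_D(p\chi_{D\setminus K_n})(x_0)$, but the hypothesis $(H_1')$ only gives $p\in K_d^{loc}(D)$, which by itself does not control $p$ near $\partial D$. In the paper's applications $D=B_n\Subset S$ and $p\in K_d^{loc}(S)$, so this is harmless there, but in the abstract setting of the lemma you should either add this as an assumption or note that it holds in the intended use.
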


\begin{proof}\ \\
	
	
	(a) If $\lambda=0$ then $U_D^\varphi\lambda=0$, consequently $z(0)=0$.
	
	\medskip 
	(b) By Lemma \ref{comparaison-semi-elliptic}, if $\lambda\geq \nu\geq 0$ then $U_D^\varphi \lambda
	\geq U_D^\varphi \nu$ and hence $$0\leq z(\lambda)-z(\nu).$$
	In addition,
	\begin{eqnarray*}
		z(\lambda)-z(\nu) &=&  U_D^\varphi \lambda (x_0)-U_D^\varphi\nu (x_0) \\
		&=& (\lambda-G_D(\varphi(\cdot, U_D^\varphi \lambda ))(x_0))-(\nu-G_D(\varphi(\cdot, U_D^\varphi \nu ))(x_0)) \\
		&=& (\lambda-\nu)+G_D(\varphi(\cdot,U_D^\varphi\nu)(x_0)-\varphi(\cdot,U_D^\varphi\lambda))(x_0)
		\leq \lambda -\nu .
	\end{eqnarray*}
	because $\varphi $ is increasing with respect to the second variable and so
	$\varphi(\cdot, U_D^\varphi \nu )-\varphi(\cdot, U_D^\varphi \lambda) \leq 0$.
	
	\medskip
	(c) Suppose first that $\varphi $ satisfies $(H_4)$. Let $\lambda\geq 1$ and $y\in D$ such that $U_D^\varphi 1 (y)\not=0$. Then by  Theorem
	\ref{Mainthe-Harnack-ineq} there exists a constant $c>0$ independent
	on $\lambda$ such that
	$$ U_D^{\varphi} \lambda(y)\leq c( U_D^{\varphi} \lambda(x_0)+1).$$
	Also, by concavity and Lemma \ref{comparaison-semi-elliptic}: $$\lambda U_D^{\varphi} 1(y)\leq U_D^{\varphi} \lambda(y).$$
	Consequently $$\lambda U_D^{\varphi} 1(y) \leq c( z(\lambda)+1).$$
	Therefore, $$\lim_{\lambda\to +\infty}z(\lambda)=+\infty.$$
	
	{Suppose now that $\varphi$ satisfies $(H'_1)$}, by Theorem 4 in \cite{D-G} there is $\varphi _1$ such that $\varphi(x,t) \leq \varphi_1(x,t)$ and $\varphi _1$ satisfies ($H_1$)-($H_4$). Let $\lambda\geq 0$ and $$
	z_1(\lambda )=U_D^{\varphi_1 }\lambda (x_0), \;\; u =U_D^{\varphi }\lambda, \mbox{  et  } v=U_D^{\varphi_1 }\lambda   .$$
	Since $Lv - \varphi(\cdot, v )\geq 0$ then by Lemma \ref{comparaison-semi-elliptic} $$v\leq u,\mbox{  in  } D.$$
	Consequently $$z_1(\lambda) \leq z(\lambda).$$
	and we conclude that $$\lim_{\lambda\to+\infty}z(\lambda)=+\infty.$$

\end{proof}

We need also a lemma that gives comparison between radial sub-solutions and super-solutions to \eqref{subproblem} in
\begin{equation}\label{ballR}
B_R=\{ x\in S: d(x,0)<R\}.\end{equation}
We follow arguments from \cite{ElmabroukHansen}.
\begin{lemma}\label{comparaison-under-radiality}
	\vspace{0.2cm} Assume that $\varphi $ satisfies ($H_1$)-($H_4$). Let $u,v\in \mathcal{C}(B_R)$ be
	radially symmetric such that $\L u,\L v\in{\mathcal{L}}^1_{loc}(B_R)$ and $v>0$ in
	$B_R\backslash \{0\}$.
	We suppose that
	$$
	\L u-\varphi(\cdot,u)\leq \L v-\varphi(\cdot,v) \hbox{ in }B_R,$$
	and
	$$ \liminf\limits_{x\to 0} \frac{u(x)}{v(x)}\leq 1.$$
	Then:$$u-v\leq0\,\,in\,\,B_R.$$
	
\end{lemma}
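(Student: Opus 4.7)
The plan is to reduce the problem, via radial coordinates and the concavity of $\varphi$, to a one-dimensional Volterra-type integral inequality for $(u-v)^+$, and to conclude by a Gronwall argument. The concavity assumption $(H_4)$ will serve to linearise the semilinear term, while the Kato-class assumption $(H_1)$ will provide the integrability needed to run Gronwall near the centre of the ball.

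First I extract from $\liminf_{x\to 0}u(x)/v(x)\leq 1$ the pointwise bound $u(0)\leq v(0)$: this is immediate by continuity if $v(0)>0$, and it forces $u(0)=0=v(0)$ otherwise. Setting $f:=u-v$ and $w:=f^+$, we thus have $f(0)\leq 0$. Next, I write $\mathcal{L}$ in radial form as $\mathcal{L}h=h''(r)+g(r)h'(r)$ with measure density $m$ satisfying $m'/m=g$ and $m(0)=0$. The standing differential inequality then becomes $(mf')' \leq m\bigl(\varphi(\cdot,u)-\varphi(\cdot,v)\bigr)$. Since $\varphi(x,\cdot)$ is concave with $\varphi(x,0)=0$, the quotient $t\mapsto\varphi(x,t)/t$ is non-increasing on $(0,\infty)$, which yields the key pointwise bound
\[
\varphi(\cdot,u)-\varphi(\cdot,v)\leq V\,w, \qquad V:=\varphi(\cdot,v)/v\geq 0
\]
(set $V=0$ where $v=0$). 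Integrating $(mf')'\leq mVw$ twice from $0$ (using $m(0)=0$ and $f(0)\leq 0$ to discard boundary terms) gives
\[
f(r)\leq \int_0^r \frac{1}{m(s)}\int_0^s m(t)V(t)w(t)\,dt\,ds,
\]
and since the right-hand side is non-negative, the same bound holds for $w$.

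A Gronwall argument then concludes the proof: taking the supremum of $w$ over $[0,r]$ and using $(H_1)$ to make the iterated integral of $mV$ small for $r$ close to $0$, we obtain $w\equiv 0$ on a right neighbourhood of the origin. The argument propagates to $[0,R)$: at any hypothetical first point where $w$ becomes positive, the same Volterra iteration, restarted at that point where now $m$ is bounded away from $0$ and $V$ is locally bounded, forces $w$ to stay zero, contradicting the choice of that point. Hence $w\equiv 0$ and $u\leq v$ in $B_R$.

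The main technical obstacle is the case $v(0)=0$, where $V$ may be singular at the origin. This is precisely where $(H_1)$ becomes indispensable: it guarantees that $\varphi(\cdot,v(\cdot))$ is locally in the Kato class, equivalently that $mV$ is integrable near $0$, so that the iterated integral above tends to $0$ with $r$ and the Gronwall initialisation at the centre is legitimate. An alternative presentation is to argue via the strong minimum principle for the Schr\"odinger-type operator $\mathcal{L}-V$ applied to $w$ at the interior point $0$, but the same singularity of $V$ at the origin must be handled in that route as well.
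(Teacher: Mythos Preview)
Your approach is genuinely different from the paper's, and it has a real gap in the case $v(0)=0$. You correctly flag this as ``the main technical obstacle'', but your claimed resolution via $(H_1)$ does not work: $(H_1)$ controls $\varphi(\cdot,v(\cdot))$, not $V=\varphi(\cdot,v)/v$. When $v(0)=0$ the extra factor $1/v$ can push $V$ out of the Kato class --- take for instance $\varphi(x,t)=t^{\gamma}$ with $0<\gamma<1$, so that $V=v^{\gamma-1}$, whose local integrability near the origin depends on the rate at which $v$ vanishes, over which the hypotheses give you no control. More fundamentally, when $v(0)=0$ the datum $f(0)\le 0$ that you extract from the $\liminf$ hypothesis is strictly weaker than the hypothesis itself: the ratio $u/v$ near $0$ carries information beyond the sign of $u(0)-v(0)$, and your Gronwall scheme throws away precisely that extra content. (There is also a secondary issue: you assume the boundary term $m(r)f'(r)\to 0$ as $r\to 0$, but under the bare hypotheses $u,v\in\mathcal{C}(B_R)$, $\mathcal{L}u,\mathcal{L}v\in\mathcal{L}^1_{loc}$ this requires justification.)

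The paper's proof sidesteps all of this with a short maximum-principle argument that uses the $\liminf$ hypothesis directly. Fix $0<r<R$; by radiality $u/v$ equals some constant $\alpha$ on $\partial B_r$. If $\alpha>1$, concavity of $\varphi(x,\cdot)$ together with $\varphi(x,0)=0$ shows that $w:=\alpha v$ satisfies $\mathcal{L}w-\varphi(\cdot,w)\ge \mathcal{L}v-\varphi(\cdot,v)\ge \mathcal{L}u-\varphi(\cdot,u)$, while $w=u$ on $\partial B_r$; the standard comparison principle (Lemma~\ref{comparaison-semi-elliptic}) then forces $u\ge \alpha v$ throughout $B_r$, hence $u/v\ge\alpha>1$ on $B_r\setminus\{0\}$, contradicting $\liminf_{x\to 0}u/v\le 1$. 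Thus $\alpha\le 1$ for every $r$, and $u\le v$ in $B_R$. This rescaling trick uses concavity globally rather than infinitesimally, so no linearised potential $V$ ever appears and no singularity at the origin has to be handled.
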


\vspace{0.5cm}

\begin{proof}

	Let $0<r<R$. Since $u$ and $v$ are constant on $\partial B_r$ we may denote
	$\alpha=\frac{u(x)}{v(x)}$ if $x\in\partial B_r$. Suppose that $\alpha>1$ and define $w=\alpha v.$
	Using concavity and $\varphi(\cdot,0)=0$,
	\begin{equation*}
	\begin{aligned}
	\L w-\varphi(\cdot,w) &= \alpha \L v-\varphi(\cdot,\alpha w)\\
	&\geq  \alpha \L v-\alpha \varphi(\cdot,v)\\
	&\geq \L v-\varphi(\cdot,v)
	\end{aligned}
	\end{equation*}
	In addition $w=u$ on $\partial B_r$. Consequently, by Lemma \ref{comparaison-semi-elliptic} ,
	$w\leq u$ in $B_r$. As a result, $1<\alpha\leq \frac{u(x)}{v(x)}$ on $B_R\backslash \{ 0\}$.
	Though, $\liminf\limits_{x\to 0} \frac{u(x)}{v(x)}\leq 1$ which gives a contradiction.
	Finally, we conclude that $\alpha \leq 1$ and $u\leq v$ in $B_R$.

\end{proof}

Consequently, we can deduce the following corollary:

\begin{corollary}\label{coincide-from-origin-to-all-domain}
	
	\vspace{0.2cm} Suppose that the assumptions of Lemma \ref{comparaison-under-radiality} are
	satisfied in $\mathbb{R}^d$ on in a ball $B_R$ only. Then
	If $u, v$ are radially symmetric both solutions to \eqref{subproblem} in
	$S$, $u(0)=v(0)$ then $u=v$ in $S$.

\end{corollary}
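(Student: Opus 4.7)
The idea is to exhaust $S$ by the balls $B_R$ from \eqref{ballR} and apply Lemma \ref{comparaison-under-radiality} twice on each of them, in both orderings of $(u,v)$, to squeeze out $u=v$ in $B_R$; passing $R\to\infty$ then gives $u=v$ on $S$.

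First I would set $\alpha=u(0)=v(0)$ and check the hypotheses of Lemma \ref{comparaison-under-radiality} on an arbitrary ball $B_R$. Because both $u$ and $v$ solve \eqref{subproblem} on $S$, the differential inequality $\L u-\varphi(\cdot,u)\leq \L v-\varphi(\cdot,v)$ in $B_R$ is in fact an equality and so holds trivially in either ordering. For the boundary hypothesis, continuity of $u$ and $v$ at the origin together with $u(0)=v(0)$ yields $\lim_{x\to 0}u(x)/v(x)=1$, so $\liminf_{x\to 0}u(x)/v(x)\leq 1$, and symmetrically $\liminf_{x\to 0}v(x)/u(x)\leq 1$.

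The remaining requirement of Lemma \ref{comparaison-under-radiality} is strict positivity of one of the two functions on $B_R\setminus\{0\}$. This I would justify from the fact that $\L u=\varphi(\cdot,u)\geq 0$, i.e.\ $u$ is $\L$-subharmonic; combined with radiality and the maximum principle this forces $r\mapsto u(r)$ to be nondecreasing, exactly as already used in \eqref{subharmonic}. In the main case $\alpha>0$ one then has $u,v\geq\alpha>0$ everywhere on $S$, so the positivity hypothesis of Lemma \ref{comparaison-under-radiality} is automatic. Two applications of that lemma, with the roles of $u$ and $v$ swapped the second time, give $u\leq v$ and $v\leq u$ on $B_R$. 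Hence $u=v$ on every $B_R$, and letting $R\to\infty$ yields $u=v$ on $S$.

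The only genuinely delicate point, and in my view the main obstacle, is the degenerate case $\alpha=0$, in which the positivity hypothesis of Lemma \ref{comparaison-under-radiality} can fail and the ratio $u/v$ is \emph{a priori} indeterminate at the origin. In the application of this corollary to the proof of Theorem \ref{large-solution-in-radial-case} the value $\alpha$ is chosen strictly positive, so one may simply restrict to $\alpha>0$; otherwise one has to argue separately, for instance by a direct uniqueness argument for the radial ODE obtained from \eqref{subproblem}, or by first comparing $u$ and $v$ on the first annulus where both are strictly positive and then propagating the resulting equality inward and outward by continuity and monotonicity.
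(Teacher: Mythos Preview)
Your argument is correct and is exactly the intended one: the paper does not even write out a proof of this corollary, but simply says ``Consequently, we can deduce the following corollary'' immediately after Lemma~\ref{comparaison-under-radiality}, and the deduction they have in mind is precisely the double application of that lemma with the roles of $u$ and $v$ swapped. Your observation about the degenerate case $\alpha=0$ is well taken; the paper does not address it either, and in the only place the corollary is used (the proof of Theorem~\ref{large-solution-in-radial-case}) one has $u_n(0)=u_{n+1}(0)=\alpha>0$, so the restriction to $\alpha>0$ is harmless.
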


\subsection{Large solutions without concavity}
First we notice that condition \eqref{condlarge} remains sufficient for existence of large solutions without assumption ($H_4$). Indeed, we have

\begin{theorem}
	\label{Large-solution-without-concavity}
	Let $\varphi(\cdot,t)\in \mathcal{L}^{\infty}_{loc}(S)$ for every $t>0$ radially symmetric with respect to the first variable satisfying $(H'_1)$, $(H_2)$,
	$(H_3)$ \footnote{ See introduction for $(H'_1)$.}. 
	If for every $c>0$, $I\varphi (\cdot , c)=\infty $
	then there exists a radial large solution of \eqref{subproblem}.
\end{theorem}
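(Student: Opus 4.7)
The plan is to follow the scheme used for Theorem \ref{large-solution-in-radial-case}, but to replace the uniqueness step (Corollary \ref{coincide-from-origin-to-all-domain}, which required concavity) by a compactness argument based on the Harnack inequality of Theorem \ref{Mainthe-Harnack-ineq}. Fix $\alpha>0$. For each $n$ set $B_n=\{s\in S:d(s,0)<n\}$ and define $z_n(\lambda)=U_{B_n}^{\varphi}\lambda(0)$. By Lemma \ref{the-function-z}, $z_n(0)=0$, $z_n$ is $1$-Lipschitz, and, because $\varphi$ satisfies $(H_1')$, $z_n(\lambda)\to\infty$ as $\lambda\to\infty$. Hence one can pick
$$\lambda_n=\inf\{\lambda>0:z_n(\lambda)=\alpha\}$$
and set $u_n=U_{B_n}^{\varphi}\lambda_n$, so that $u_n(0)=\alpha$; by Lemma \ref{radiality-in-bounded}, each $u_n$ is radial.

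Next I apply Theorem \ref{Mainthe-Harnack-ineq} (note that $\varphi(\cdot,t)\in\mathcal L^{\infty}_{loc}(S)$ together with monotonicity in $t$ yields $(H_1')$, making the theorem applicable). On any fixed ball $B_R$ with $R<n$ this gives
$$\sup_{B_R}u_n\leq C_R(1+\inf_{B_R}u_n)\leq C_R(1+\alpha),$$
so $\{u_n\}$ is locally uniformly bounded. Consequently $\varphi(\cdot,u_n)$ is locally uniformly bounded as well, and standard interior elliptic estimates yield a subsequence converging in $C_{loc}^{1}(S)$ to a continuous radial function $u$ solving $\mathcal L u-\varphi(\cdot,u)=0$ on all of $S$, with $u(0)=\alpha$.

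It remains to verify that $u$ is large. Since $\mathcal L u=\varphi(\cdot,u)\geq 0$, $u$ is $\mathcal L$-subharmonic; being radial, by \eqref{subharmonic} it is non-decreasing in the radial variable, so $L:=\lim_{d(s,0)\to\infty}u(s)$ exists in $(\alpha,+\infty]$. If $L$ were finite, then $u$ would be a nontrivial nonnegative bounded solution of \eqref{subproblem}, which by Theorem \ref{bounded-solution} would force $I\varphi(\cdot,c_0)<\infty$ for some $c_0>0$, contradicting the hypothesis $I\varphi(\cdot,c)=\infty$ for all $c>0$. Hence $L=+\infty$, which gives the desired radial large solution.

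The main obstacle will be the passage to the limit: one must ensure that the subsequential $C_{loc}^{1}$-limit is a genuine solution of the nonlinear equation rather than merely $\mathcal L$-subharmonic. This reduces to locally uniform control of $\varphi(\cdot,u_n)$, and here the hypothesis $\varphi(\cdot,t)\in\mathcal L^{\infty}_{loc}(S)$, combined with the Harnack bound on $u_n$ and the monotonicity $(H_2)$, makes the application of interior Schauder/$W^{2,p}$ theory routine. Apart from this compactness step, the argument is essentially a streamlined version of the proof of Theorem \ref{large-solution-in-radial-case}.
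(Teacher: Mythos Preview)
Your proposal is correct and follows essentially the same strategy as the paper: construct radial solutions $u_n$ on $B_n$ with $u_n(0)=\alpha$ via Lemma \ref{the-function-z}, use the Harnack inequality (Theorem \ref{Mainthe-Harnack-ineq}) to get local uniform bounds, pass to a subsequential limit $u$ which is a radial solution with $u(0)=\alpha$, and then invoke Theorem \ref{bounded-solution} plus subharmonicity to force $u\to\infty$. The only difference is in the compactness step: the paper isolates this as a separate lemma (Lemma \ref{subsequence-Ascoli-converge-uniformly}) and proves equicontinuity by hand, writing $u_n+G(\varphi(\cdot,u_n^r))=h_n$ on $B_{n_0}$ and bounding $\partial_i u_n$ via the Harnack inequality for the harmonic part $h_n$ and the explicit gradient estimate \eqref{gradestim} for the Green potential, then applying Arzel\`a--Ascoli and a diagonal argument; you instead appeal to interior $W^{2,p}$/Schauder estimates. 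Both routes are valid and rest on the same inputs (the Harnack bound and $\varphi(\cdot,t)\in\mathcal L^\infty_{loc}$), so this is a cosmetic rather than a substantive difference.
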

{We intend to proceed as in the proof of Theorem \ref{large-solution-in-radial-case} but it is not so easy to constuct a global not trivial solution as the limit of solutions $u_n$ in $B_n$. However, given a sequence of solutions of \eqref{subproblem} which coincide at one point, we prove that there exists a subsequence which converges uniformly to a solution of \eqref{subproblem}. }

\begin{lemma}\label{subsequence-Ascoli-converge-uniformly}
	Let $\varphi(\cdot,c) \in \mathcal{L}^{\infty}_{loc}(S)$ for every $c>0$ and suppose that
	$(H'_1)$, $(H_2)$ and $(H_3)$ are satisfied. Given $\alpha
	>0$ and $x_0\in S$ let $u_n$ be a solution of \eqref{subproblem} in $B_n$
	satisfying $u_n(x_0)=\alpha$.
	Then there exists a subsequence $(u_{ n_k })$ that converges uniformly to a solution
	$u$ of \eqref{subproblem} in $S$ and satisfying $u(x_0)=\alpha$.
\end{lemma}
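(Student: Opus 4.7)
The approach is Arzel\`a--Ascoli combined with a diagonal extraction. The two ingredients to verify are uniform local boundedness of $(u_n)$ and uniform local equicontinuity; once a locally uniformly convergent subsequence is in hand, passing to the limit in the distributional form of \eqref{subproblem} is routine and identifies the limit as a solution with $u(x_0)=\alpha$.

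\textbf{Uniform local boundedness via Harnack.} Fix a compact set $K\subset S$ and enlarge it to a compact $K'$ containing $K\cup\{x_0\}$. For every $n$ large enough so that $K'\subset B_n$, Theorem \ref{Mainthe-Harnack-ineq} applied in $B_n$ produces a constant $C_{K'}>0$, depending only on $K'$, with
\[
\sup_{K'} u_n \;\le\; C_{K'}\bigl(1+\inf_{K'}u_n\bigr) \;\le\; C_{K'}(1+u_n(x_0)) \;=\; C_{K'}(1+\alpha).
\]
Since $\varphi$ is nondecreasing in the second variable, $\varphi(\cdot,u_n)\le \varphi(\cdot,C_{K'}(1+\alpha))$ on $K'$, which is a single function in $\mathcal{L}^\infty_{loc}(S)$ by hypothesis. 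Thus on every compact the right-hand sides $\varphi(\cdot,u_n)$ are dominated by a common bounded function.

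\textbf{Equicontinuity and extraction.} With $\L u_n = \varphi(\cdot,u_n)$ bounded uniformly in $n$ on compacts, interior elliptic regularity yields uniform $C^{0,\alpha}_{loc}$ estimates for $(u_n)$; concretely, on any small ball $B$ with $\overline{B}\subset B_n$, write $u_n = h_n - G_B(\varphi(\cdot,u_n))$, where $h_n$ is $\L$-harmonic with boundary data $u_n|_{\partial B}$ (bounded by $C_{K'}(1+\alpha)$, hence uniformly smooth in the interior by harmonic regularity), and $G_B$ applied to a uniformly locally bounded function is uniformly equicontinuous. Taking an exhaustion $K_m\uparrow S$ and diagonalizing, Arzel\`a--Ascoli produces a subsequence $(u_{n_k})$ converging uniformly on every compact to a continuous function $u$ with $u(x_0)=\alpha$.

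\textbf{Identification of the limit.} For any smooth test function $\psi$ with compact support $K$ and $n_k$ large enough that $K\subset B_{n_k}$, distributional testing gives $\int u_{n_k}\,\L\psi = \int \varphi(\cdot,u_{n_k})\,\psi$. Uniform convergence on $K$ handles the left-hand side. On the right, $(H_2)$ gives pointwise convergence $\varphi(\cdot,u_{n_k})\to \varphi(\cdot,u)$, while the uniform domination by $\varphi(\cdot,C_K(1+\alpha))\in \mathcal{L}^\infty(K)$ allows dominated convergence. Hence $\L u=\varphi(\cdot,u)$ in $S$ in the distributional sense, and elliptic regularity upgrades $u$ to a continuous (classical) solution of \eqref{subproblem}.

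\textbf{Main obstacle.} The only nontrivial point is the uniform equicontinuity estimate: without a uniform $L^\infty$ bound on $\varphi(\cdot,u_n)$ the elliptic regularity would fail to be uniform in $n$. This is precisely what the Harnack-type Theorem \ref{Mainthe-Harnack-ineq} supplies by turning the single pointwise value $u_n(x_0)=\alpha$ into a uniform sup bound on every compact, which then, through monotonicity of $\varphi$ and $\varphi(\cdot,c)\in \mathcal{L}^\infty_{loc}(S)$, translates into a uniform local bound on the right-hand side of the equation.
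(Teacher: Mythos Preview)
Your proposal is correct and follows essentially the same strategy as the paper: Harnack-type inequality (Theorem~\ref{Mainthe-Harnack-ineq}) for uniform local bounds, decomposition of $u_n$ into an $\L$-harmonic part and a Green potential of $\varphi(\cdot,u_n)$, equicontinuity of each piece, Arzel\`a--Ascoli, and diagonal extraction. The only notable difference is presentational: the paper works in a fixed $B_{n_0}$ and establishes equicontinuity by proving explicit uniform $C^1$ bounds, using the gradient estimate $|\nabla G(r)|\le Cr^{1-n}$ from Theorem~\ref{estimation-Green-small-r} to control $\partial_i G(\varphi(\cdot,u_n^r))$ directly, whereas you invoke interior elliptic regularity more abstractly; both routes are valid and rest on the same uniform $L^\infty$ bound on $\varphi(\cdot,u_n)$.
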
	
\begin{proof}
	We will use Arzela-Ascoli theorem to prove the existence of such a sequence.
	Let $D$ be a regular bounded domain and let $n_0\in\mathbb{N}$ be such that $\overline{D}\subset
	B_{n_0}$ and $x_0\in D$. We begin by equicontinuity of $\{u_n, n> n_0\}$ in $\overline{D}$. It is
	enough to prove that there exists $C>0$ such that for every $n> n_0$ and $i\in\{1,\dots, d\}$
	\begin{equation}\label{derivbound}
	\Big |   \sup_{  x \in \bar D}\frac{\partial}{\partial x_i} u_n(x)\Big | \leq
	C.\end{equation}
	
	We have $$  u_n  +  G_{B_{n_0}}(\varphi(\cdot,u_n))=H_{B_{n_0}} u_n=h'_n, \hbox{ in }B_{n_0}. $$
	Moreover, in $B_{n_0}$ we have $G_{B_{n_0}}(\varphi(\cdot,u_n))=G(\varphi(\cdot,u^r_n))-g_n$, where $g_n$ is an $\L$-harmonic nonnegative function, $u^r_n$ is $u_n$ restricted to $B_{n_0}$ and
	$$
	G(\varphi(\cdot,u^r_n))= \int _{B_{n_0}}G(x,y)\varphi (y ,u_n(y))\ dm_L(y).
	$$
	Therefore, 
	$$
	u_n+G(\varphi(\cdot,u^r_n))=h'_n+g_n=h_n.$$ 
	By Harnack inequality for $\L$ there is 
	$\beta>0$ such that for every $x\in\overline{D}$ and $n> n_0$
	$$\Big |\frac{\partial}{\partial x_i} h_n(x)\Big | \leq \beta
	\inf_{y\in \bar D} h_n(y).$$
	Also $$0\leq h_n(x)   \leq  \sup_{y\in B_{n_0}} u_n(y).$$
	Furthermore, by Theorem \ref{Mainthe-Harnack-ineq} there exists $\gamma>0$
	\begin{equation}\label{Harnack-solutions} \sup_{x\in B_{n_0}} u_n(x) \leq \gamma
	(1+u_n(x_0))=\gamma(1+\alpha), \quad \mbox{for}\ n>n_0.
	\end{equation}
	Hence, $$\sup_{x\in \bar D}\Big | \frac{\partial}{\partial x_i}h_n(x)\Big |  \leq  \beta \gamma
	(1+\alpha) \hbox{ for  }\ n> n_0.$$ Secondly, we are going to prove that there is $C_1>0$ such that
	\begin{equation}\label{Greenbound}
	\Big | \sup_{x\in \bar D} \frac{\partial}{\partial x_i} G (\varphi(\cdot,u^r_n))(x)  \Big | \leq
	C_1\end{equation} for every $n> n_0$. In view of \eqref{gradestim},
	\begin{equation}
	\left| \frac{\partial}{\partial x_i} G(x,y)\right |\leq C|x-y|^{-d+1}.
	\end{equation} 
	Indeed, in a bounded domain we may replace left-invariant derivatives by the partial ones and the Riemannian distance by the euclidean distance using the identification of harmonic $NA$ with $\R ^d$ described in Section \ref{NAgroups}. We have the same estimate for $G$ corresponding to the Laplace operator in $\R ^d$. 
	Observe, that \eqref{Harnack-solutions} together with
	$\varphi(\cdot,c)\in\mathcal{L}^{\infty}_{loc}(S)$ implies $\varphi(\cdot,u_n)$ is uniformly
	bounded in $B_{n_0}$. Hence  
	\begin{equation}\label{derivint}
	\frac{\partial}{\partial x_i} G (\varphi(\cdot,u^r_n))(x) = \int_{B_{n_0}}  \frac{\partial G (x,y)}{\partial x_i}    \varphi(y,u_n(y))  \,{ dm_L(y)}, \hbox{ for }x\in {B_{n_0}}.\end{equation}
	
	Therefore, $$\Big | \frac{\partial}{\partial x_i} G (\varphi(\cdot,u^r_n))(x)  \Big | \leq
	\sup_{x\in B_{n_0}} \varphi(x,\gamma (1+\alpha )) \int_{B_{n_0}} \Big | \frac{\partial}{\partial
		x_i}  G (x,y)  \Big | \,{dm_L(y)}$$ which is bounded independently of $n$.
	
	Consequently, \eqref{Greenbound} follows and we conclude \eqref{derivbound}.
	Therefore, the family of functions $\{  u_n ,  n > n_0\}$ is equicontinuous uniformly bounded on
	$\overline{D}$. By Arzela-Ascoli theorem there exists a subsequence $(u_{n_k})$ that converges
	uniformly on $\overline{D}$. Hence, by Lemma 8 in \cite{Ghardallou2}, $v_0= \lim\limits
	_{n\to +\infty} u_{n_k }$ is a solution of \eqref{Main-problem} in $D$
	satisfying $v_0(x_0)= \alpha$.
	
	The argument presented above applies to $D=B_n$ for any $n$. So we start with $D=D_1$ and we
	construct a subsequence $(u_{n_k})$ converging uniformly on $D_1$ to a solution of
	\eqref{Main-problem}. Then we pass to $D=D_2$ and we select out of
	$(u_{n_k})$ an analogous subsequence for $D_2$. We conclude by the diagonal method.
\end{proof}

\begin{proof}[Proof of Theorem \ref{Large-solution-without-concavity}]
	Let $\alpha>0$.	We prove that there exists $u$ nonnegative radial solution of
	\eqref{subproblem} such that $u(0)=\alpha$ and $\lim\limits_{d(s)\to\infty}u(s)=+\infty.$
	Let $z_n$ be defined as in Lemma \ref{the-function-z} for $B_n$, see \eqref{ball}. So there exists $\nu>0$ such that
	$z_n(\nu)=\alpha$.
	Let $\lambda_n=\inf \{\lambda>0, z_n(\lambda)=\alpha\}$ and $u_n=U_{B_n}^\varphi\lambda_n$. As a
	result, for every $n\in\mathbb{N}$, $u_n(0)=\alpha=u_{n+1}(0)$. Also $u_n, u_{n+1}$ are both
	continuous radial solutions in $B_n.$ By Lemma \ref{subsequence-Ascoli-converge-uniformly}, there exists a subsequence $(u_{ n_k })$ that converges uniformly to a radial solution
	$u$ of \eqref{subproblem} satisfying $u(0)=\alpha$.
	
	Now it remains to prove that $\lim\limits_{d(s)\to\infty}u(s)=\infty.$ In view of Theorem
	\ref{bounded-solution}, if for every $c>0$, $I\varphi (\cdot , c)=\infty$ then there is no ``bounded nontrivial solution" in $S$. Though, $u$ is nontrivial, so $u$ is
	unbounded in $S$. Also, $u$ is radial 
	$\L$-subharmonic, so $u$ is increasing, consequently
	$\lim\limits_{d(s)\to\infty}u(s)=\infty.$
	
\end{proof}
For the full characterization we need a little bit more about $\varphi $.

\begin{theorem}\label{largenoconcavity}
	Suppose that $\varphi (s,t)=p(s)\psi (t)$, satisfies $(H_2)$,$(H_3)$ and  $p\in  \mathcal{L}^{\infty}_{loc}(S)$ is radial and $\psi (t)\leq c(t+1)$. Then 
	there is a large solution to \eqref{subproblem}
	if and only if $I(p)=\infty$. Moreover, if the latter is satisfied there is a radial large solution. 
\end{theorem}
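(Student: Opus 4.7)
The plan is to handle the two implications separately. For the sufficiency direction, I would verify that $\varphi(x,t)=p(x)\psi(t)$ meets the hypotheses of Theorem~\ref{Large-solution-without-concavity}: radiality, $(H_2)$, and $(H_3)$ are given; the bound $\psi(t)\leq c(t+1)$ combined with $p\in\mathcal{L}^\infty_{loc}(S)\subset\mathcal{K}_d^{loc}$ delivers $(H'_1)$; and since $\psi$ is strictly increasing with $\psi(0)=0$, one has $\psi(c)>0$ for every $c>0$, so that $I\varphi(\cdot,c)=\psi(c)\,I(p)=\infty$ whenever $I(p)=\infty$. Theorem~\ref{Large-solution-without-concavity} then supplies the desired radial large solution.

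For the necessity direction, I would argue by contraposition: assume $I(p)<\infty$. For any $c_0>0$, continuity of $\psi$ gives $\psi(c_0)<\infty$, hence $I\varphi(\cdot,c_0)=\psi(c_0)\,I(p)<\infty$. Since $\varphi$ satisfies $(H_1)$--$(H_3)$, Theorem~\ref{bounded-solution} yields a bounded nontrivial solution of \eqref{subproblem}. I would then invoke the non-coexistence principle from \cite{D-G}---explicitly quoted in the remarks immediately after Theorems~\ref{texistence2} and \ref{texistence}---stating that under $(H_1)$--$(H_3)$ a bounded nontrivial solution and a large solution cannot simultaneously exist. This rules out any large solution when $I(p)<\infty$, which is the contrapositive of what we want.

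The key subtlety, and the main obstacle, is that the version of the non-coexistence statement reproduced in the body of the paper (Theorem~\ref{existence1}) still assumes the concavity hypothesis $(H_4)$, which is \emph{not} available here; one has to appeal to the stronger formulation from \cite{D-G}. Should this be inconvenient, I would replace the last step by a self-contained radial Gronwall estimate on the spherical average $u^{*}$ of a hypothetical large solution $u$: on the harmonic space $S$ one has $(\L u)^{*}=\L u^{*}$, and combined with radiality of $p$ and $\psi(t)\leq c(t+1)$ this yields $\L u^{*}\leq cp(u^{*}+1)$; writing the inequality in geodesic polar form $(\theta(u^{*})')'\leq c\theta p(u^{*}+1)$ and exploiting the estimate $\theta(s)\int_{s}^{R}\theta(t)^{-1}\,dt\lesssim w(s)$---with $w(s)=s$ on $\R^{d}$ and $w(s)=1$ on $NA$, exactly the weight in the definition of $I(p)$---Gronwall's inequality forces $u^{*}$ to be uniformly bounded, contradicting $u^{*}(r)\to\infty$.
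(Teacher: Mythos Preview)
Your approach is essentially the same as the paper's: sufficiency via Theorem~\ref{Large-solution-without-concavity}, necessity via ``$I(p)<\infty\Rightarrow$ bounded solution exists $\Rightarrow$ no large solution''. The only difference is that you overlooked Theorem~\ref{existence}, stated in the paper immediately before the proof of Theorem~\ref{largenoconcavity}: this is precisely the non-coexistence result for $\varphi(x,t)=p(x)\psi(t)$ under $(H'_1),(H_2),(H_3)$ \emph{without} concavity, so the ``key subtlety'' you flag is already resolved in the text and your Gronwall backup is unnecessary.
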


In this case we make use of the following analogue of Theorem \ref{existence1},
see \cite{D-G}.
\begin{theorem}\label{existence}
	Let $L$ be a second order elliptic operator with smooth coefficients defined on a domain $\Omega
	\subset \mathbb{R}^d$, $d\geq 3$, satisfying $L1=0$. Suppose that $\varphi (x,t)=p(x)\psi (t)$,
	$(H'_1),(H_2),(H_3)$ hold and that there is a bounded solution to \eqref{subproblem}. Then there is no
	large solution.
\end{theorem}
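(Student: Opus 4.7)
My plan is to argue by contradiction: suppose both a nontrivial nonnegative bounded solution $v$ and a large solution $u$ to \eqref{Main-problem} exist, and derive an inconsistency from the Green representation \eqref{oneone}. The first reduction is to extract, from the existence of $v$, finiteness of the Green potential of the multiplier $p$ in $\varphi = p\psi$. By \eqref{oneone}, $h_v := v + G_\Omega(\varphi(\cdot, v))$ is a bounded positive $L$-harmonic function, so $G_\Omega(\varphi(\cdot, v))$ is bounded on $\Omega$. Since $v$ is nontrivial and $\psi$ is continuous increasing with $\psi(0)=0$, pick a ball $B \subset \Omega$ on which $v \geq c_0 > 0$, so $\psi(v) \geq \psi(c_0) > 0$ on $B$. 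Using the local Kato-class bound on $B$ together with the pointwise inequality $\psi(c_0)\, G_\Omega(p\, \mathbf{1}_{\Omega\setminus B}) \leq G_\Omega(p\,\psi(v))$, one concludes $G_\Omega(p)(x) < \infty$ at every $x$. Consequently $G_\Omega(\varphi(\cdot, c)) = \psi(c)\, G_\Omega(p) < \infty$ for every $c > 0$.

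The core of the argument is to establish the Green representation for the large solution $u$ itself. Exhaust $\Omega$ by balls $B_n$ and apply \eqref{oneoner} locally, yielding
\[
u + G_{B_n}(\varphi(\cdot, u)) = H_{B_n} u \quad \text{in } B_n,
\]
where $H_{B_n}u$ is the $L$-harmonic extension of $u|_{\partial B_n}$. The obstacle is that $u$ is unbounded, so $\varphi(\cdot, u)$ cannot be controlled by any fixed constant multiple of $p$. Here the sublinear bound $(H'_1)$, namely $\varphi(\cdot, u) \leq p(u+1)$, combined with the finiteness $G_\Omega(p) < \infty$ from Step~1, enters crucially: iterating the Green operator against the linear majorant produces a Khasminskii/gauge-type estimate (a Neumann-type series in the operator $f \mapsto G_\Omega(pf)$, convergent thanks to the local control afforded by $G_\Omega(p)<\infty$), and one obtains a uniform bound for $G_{B_n}(\varphi(\cdot, u))$ on compact subsets as $n \to \infty$. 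Passing to the limit gives $u + G_\Omega(\varphi(\cdot, u)) = h_u$ with $h_u$ a positive $L$-harmonic function and $G_\Omega(\varphi(\cdot, u))$ finite everywhere.

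The contradiction is then drawn as follows. Since $u \leq h_u$ and $u(x) \to \infty$ as $d(x,0)\to\infty$, the positive $L$-harmonic function $h_u$ also tends to infinity. But the bounded positive $L$-harmonic function $h_v$ from Step~1, together with $L\mathbf{1}=0$, serves as a reference; a boundary-Harnack / minimal-thinness comparison at infinity forces any positive $L$-harmonic function arising as a monotone limit of the $H_{B_n}u$ to be dominated by a constant multiple of $h_v$, hence bounded, contradicting $h_u \to \infty$. The main obstacle of the whole proof is the middle step: uniformly controlling $G_{B_n}(\varphi(\cdot, u))$ without any concavity of $\psi$. The finiteness of $G_\Omega(p)$, extracted in Step~1, is exactly the substitute for the concavity assumption $(H_4)$ used in Theorem~\ref{existence1}, and the Khasminskii/gauge machinery is the technical heart that converts the linear envelope $(H'_1)$ into the required uniform estimate.
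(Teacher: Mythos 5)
Your proposal has genuine gaps at each of its three stages. (1) The key inequality in your first step is backwards: from $v\ge c_0$ on a ball $B$ you only get $\psi(v)\ge\psi(c_0)$ \emph{on} $B$, hence finiteness of $G_\Omega\bigl(p\,\mathbf{1}_{B}\bigr)$, which is purely local information (essentially already contained in the Kato hypothesis); it gives no control of $G_\Omega\bigl(p\,\mathbf{1}_{\Omega\setminus B}\bigr)$, so the conclusion $G_\Omega(p)<\infty$ does not follow. In the paper the analogous deduction is made only in the radial whole-space setting (Theorem \ref{bounded-solution}), where radial $\L$-subharmonicity yields the lower bound $u\ge u(r_0)>0$ outside a ball; Theorem \ref{existence} assumes no radiality, so you have no substitute for that step. (2) Even granting $G_\Omega(p)<\infty$ pointwise, this does not make the Neumann series for $f\mapsto G_\Omega(pf)$ converge: a Khasminskii/gauge argument requires a genuine smallness or 3G-type condition (as in Lemma \ref{3-G theorem for elliptic operator} combined with \eqref{Kato-c-1}, which only works on small balls), so your uniform bound on $G_{B_n}(\varphi(\cdot,u))$ for the large solution $u$ is unsubstantiated; and even if it held, $u\le h_u$ with $h_u$ positive $L$-harmonic is not yet a contradiction. (3) The final step is false in this generality: under the sole hypothesis $L1=0$ on a domain $\Omega\subset\R^d$ there is no Liouville theorem and no boundary-Harnack or minimal-thinness principle forcing a positive $L$-harmonic function obtained as $\lim_n H_{B_n}u$ to be dominated by a constant multiple of the bounded harmonic function $h_v$ (for $\Delta$ on a half-space compare $h_v\equiv1$ with $x\mapsto x_d$).

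Note also that the paper itself does not prove Theorem \ref{existence}: it is quoted from \cite{D-G}, and the machinery the paper borrows from there points to a different route, namely comparison/maximum-principle arguments rather than Green representations at infinity. In the concave case (Theorem \ref{existence1}) one can argue that for every $\lambda\ge1$ the set $\{\lambda v>u\}$ is relatively compact (since $u$ is large and $v$ bounded) and that $\lambda v$ is a subsolution by concavity, so the maximum principle gives $u\ge\lambda v$ for all $\lambda$, absurd wherever $v>0$; the sublinear case $\varphi=p\psi$ with $(H'_1)$ is then reduced to this by the concave majorant $\varphi_1\ge\varphi$ of Theorem 4 in \cite{D-G}, exactly the device the paper already uses in part (c) of Lemma \ref{the-function-z}. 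If you want a self-contained proof, that reduction is the missing idea; your potential-theoretic scheme would need substantial additional hypotheses to be salvaged.
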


\begin{proof}[Proof of Theorem \ref{largenoconcavity}]
	Sufficiency of $I(p)=\infty $ is guaranteed by Theorem \ref{Large-solution-without-concavity}. It
	is enough now to prove that existence of a large solution implies $I(p)=\infty$. Suppose that
	this is not true i.e there is a large solution and $I(p)<\infty $. 
	Hence there is a nontrivial nonnegative bounded solution of equation \eqref{subproblem}. But then, by Theorem \ref{existence} there is no large solution for \eqref{subproblem}. This gives
	a contradiction.
\end{proof}

\section{$NA$ groups and the fundamental solution for $\L $.} \label{NAgroups}

\subsection{Basic structure of $S=NA$}\label{basic-structure-NA}
In this section, we recall briefly the structure of $NA$ groups. For more details we refer to
\cite{Anker-Damek-Yacoub,BTV,D-R,Rouvière}.

Let $\mathfrak{n}$ be a two step nilpotent Lie algebra, equipped with an inner product $\langle
\cdot, \cdot \rangle$. Denote by $\mathfrak{z}$ the center of $\mathfrak{n}$ i.e. $\mathfrak{z}=\{
a\in \mathfrak{n}, \forall x\in \mathfrak{n}~~ [a,x]=0_{\mathfrak{n}} \}$ and $\mathfrak{v}$ be the
orthogonal complement of $\mathfrak{z}$ in $\mathfrak{n}$. Consequently,
$[\mathfrak{n},\mathfrak{z}]=0_{\mathfrak{n}}$, $[\mathfrak{n}, \mathfrak{n}]\subset \mathfrak{z}$
and $\mathfrak{n}=\mathfrak{z}\bigoplus\mathfrak{v} $.

Additionally we suppose that for every $z\in \mathfrak{z}$ there is a linear map \newline
$J_z:\mathfrak{v}\to\mathfrak{v}$ satisfying
\begin{equation}\label{31}
\langle J_z (v), v'\rangle=\langle z,[v,v']\rangle, \hbox{    for every  }v,v'\in \mathfrak{v},
\end{equation}
and
\begin{equation}\label{32}
J_z\circ J_z(v)=-|z|^2v=-\langle z,z\rangle v, \hbox{  for every }v\in \mathfrak{v}.
\end{equation}
Then $\mathfrak{n}$ is called a \emph{Lie algebra of Heisenberg type}.
We denote $$p=\dim \mathfrak{v},~~~~q=\dim \mathfrak{z}.$$

The corresponding Lie group $N$ is called a \emph{Heisenberg Lie group}. Via the exponential map,
we shall identify $\mathfrak{n}$ and $N$, hence the multiplication in $N\equiv
\mathfrak{n}=\mathfrak{z}\bigoplus\mathfrak{v} $ reads $$x.x'=x+x'+\frac{1}{2}[x,x'].$$

We consider the semi-direct product $S=NA=N\ltimes\mathbb{R^*_+}$ defined by
\begin{equation*}
(v,z,a).(v',z',a')=(v+a^\frac{1}{2}v',z+az'+\frac{1}{2}a^\frac{1}{2}[v,v'],aa'),
\end{equation*}
for every $v,v'\in \mathfrak{v}$, $z,z'\in \mathfrak{z}$, $a,a'\in \mathbb{R}_+^*$. 
$NA$ is a solvable Lie group, with the corresponding Lie algebra
$\mathfrak{s}=\mathfrak{v}\oplus \mathfrak{z}\oplus \mathbb{R}$.
Let $d$ be the left invariant Riemannian distance on $NA$ induced by the inner product
$$\langle (v,z,l),(v',z',l')\rangle_{0_s}=\langle v,v'\rangle+\langle
z,z'\rangle+ll',$$ defined originally on $T_{0_s} (NA)=\mathfrak{s}$\footnote{  $\; \; 0_s$ is the neutral element of $S$.}.
Then the corresponding Riemannian volume is given by
$$dm_L=a^{-Q}dXdZ\frac{da}{a},$$
where $Q=\frac{p}{2}+q$ is the homogeneous dimension on $N$. $dm_L$ is at the same time a left invariant Haar measure on $NA$.

$NA$ has global geodesic coordinates i.e. for $s\in NA$ we may write $s=(r,\theta)$, where $r=d(s,0_s)$, $\theta $ belongs to the unit sphere in $\mathfrak{s}\equiv \R^{p+q+1}$ and the above decomposition is a diffeomorphism.
The Riemannian volume in geodesic coordinates $s=(r,\theta)$ is given by
\begin{equation}\label{volume}
dm_L(s)=2^{p+q} (\sinh(\frac{r}{2}))^{p+q})(\cosh(\frac{r}{2}))^q dr d\rho_0(\theta),\end{equation} where
$d\rho_0(\theta)$ is the surface measure on the unit sphere in $\mathbb{R}^{p+q+1}$.

Let $\L$ be the Laplace-Beltrami-operator in NA. It is symmetric with respect to $dm_L$ and it can be written as
$$ \L = L_0 + L_1,$$ where
\begin{equation}\label{expression-of-L}
\begin{aligned}
L_0 &= (1-Q) a\partial_a + a^2 \partial_a^2 + a( a + \frac{1}{4} |X|^2 ) \Delta_Z + a \Delta_X\\
L_1 &= a^2 \sum_{m+1}^{m+k} \sum_{i=1}^m <[X,e_i] , e_r > \partial_r \partial_i.
\end{aligned}
\end{equation}
$\Delta_X$, $\Delta_Z$ are the Laplace operators on $\mathfrak{v}$ and $\mathfrak{z}$ respectively, $e_1,\dots , e_m$ is an orthonormal basis of $\mathfrak{v}$ and $e_{m+1},\dots e_{m+k}$ is an orthonormal basis of $\mathfrak{z}$.


If a function $f$ depends only on $|X|$, $Z$ and $a$ then $L_1f=0$ Hence, the radial part of $L$ is
given by
\begin{eqnarray*}
	L_{rad} &=&  \partial_r^2+(~~~~\frac{p+q}{2}\coth(\frac{r}{2})~~+~~\frac{q}{2}\tanh(\frac{r}{2})~~~~)\partial_r \\
	&=&\partial_r^2+(~~~~\frac{p}{2}\coth(\frac{r}{2})~~+~~q\coth(r)~~~~)\partial_r .
\end{eqnarray*}
\subsection{Estimates of the fundamental solution}

Let $h_t(s,s_1)$ be the heat kernel for $\L $. Then
$$ h_t(s,s_1) = h_t ({s_1}^{-1} s, 0_s ), $$
$$
h_t ( s,0_s)=h_t ( d(s,0_s))$$
is radial and we have the following estimate proved in \cite{Anker-Damek-Yacoub}, section 5. 
\begin{theorem}\label{estimation-heat-kernel}
	Let $r=r(s)=d(0_s,s)$. Then
	$$h_t(r)\asymp t^{-\frac{3}{2}}(1+r)(1+\frac{1+r}{t})^{\frac{n-3}{2}}\exp(-\frac{Q^2}{4}t-\frac{Qr}{2}-\frac{r^2}{4t}), ~~~~ r\geq 0, t>0.$$
	
\end{theorem}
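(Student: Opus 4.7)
The plan is to exploit radiality and reduce to a one-dimensional Jacobi-type heat equation. Since $h_t$ is radial, $h_t=h_t(r)$ satisfies $\partial_t h_t = L_{\mathrm{rad}}h_t$ with initial datum the normalized Dirac mass at $0_s$ (with respect to $dm_L$). The operator $L_{\mathrm{rad}}$ displayed just before the theorem is a Jacobi operator with parameters $(\alpha,\beta)=\bigl(\tfrac{p+q-1}{2},\tfrac{q-1}{2}\bigr)$, whose bottom of spectrum equals $-Q^2/4$; this already accounts for the factor $e^{-Q^2 t/4}$ in the claimed estimate. The natural framework is therefore Jacobi/spherical analysis on $NA$, exactly as for rank-one symmetric spaces.

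First I would introduce the Jacobi (spherical) transform with eigenfunctions $\varphi_\lambda$ and the explicit Harish-Chandra $c$-function (a ratio of $\Gamma$-factors determined by $p$ and $q$), writing
$$h_t(r)=C\,e^{-Q^2 t/4}\int_0^{\infty} e^{-\lambda^2 t}\,\varphi_\lambda(r)\,|c(\lambda)|^{-2}\,d\lambda.$$
Direct asymptotic analysis of this oscillatory integral is delicate across all $(t,r)$ regimes, so I would instead apply the Abel transform $\mathcal{A}$, which on harmonic $NA$ groups intertwines $L_{\mathrm{rad}}+Q^2/4$ with $\partial_r^2$ on $\mathbb{R}$ and sends radial functions to even functions of $r$. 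The inverse $\mathcal{A}^{-1}$ is an explicit differential operator with weights built from $\sinh(r/2)$ and $\cosh(r/2)$, with total order tied to $n=p+q+1$.

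Consequently, up to the spectral shift, $\mathcal{A}h_t$ equals the one-dimensional Euclidean Gaussian $(4\pi t)^{-1/2}e^{-r^2/4t}$, and inverting $\mathcal{A}$ represents $h_t(r)$ as a superposition of this Gaussian against the explicit inversion kernel. Differentiating under the integral produces the Gaussian-exponential factor $\exp(-Qr/2-r^2/4t)$: the $e^{-r^2/4t}$ is the pulled-back Euclidean heat kernel and $e^{-Qr/2}$ arises from the volume-growth weight. Careful bookkeeping of the number of differentiations and of the $\sinh,\cosh$ weights generates the polynomial-hyperbolic prefactor $(1+r)\bigl(1+\tfrac{1+r}{t}\bigr)^{(n-3)/2}$.

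The main obstacle is uniformity in $(t,r)$ together with the matching lower bound, i.e.\ obtaining $\asymp$ rather than merely $\lesssim$. One must patch together the regimes $r\ll\sqrt{t}$, $r\sim\sqrt{t}$, $\sqrt{t}\ll r\ll t$ and $r\gg t$; each corresponds to a different dominant balance in the saddle-point analysis of the Abel-inverted integral, and the peculiar interpolating prefactor $\bigl(1+\tfrac{1+r}{t}\bigr)^{(n-3)/2}$ is exactly the combinatorial bookkeeping of these balances. The delicate point is showing that the oscillations of $\varphi_\lambda$ (equivalently, the derivatives in $\mathcal{A}^{-1}$) do not cause cancellations that would spoil the lower bound in the intermediate regimes.
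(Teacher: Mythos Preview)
The paper does not give its own proof of this theorem: immediately before the statement it says the estimate is ``proved in \cite{Anker-Damek-Yacoub}, section 5'' and simply quotes the result. So there is no in-paper argument to compare against; the theorem functions here as an imported black box used to derive Theorems~\ref{estimation-Green-r-big} and~\ref{estimation-Green-small-r}.

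Your outline is, at the level of strategy, exactly the route taken in the cited reference: reduce to the radial Jacobi operator with parameters $(\alpha,\beta)=\bigl(\tfrac{p+q-1}{2},\tfrac{q-1}{2}\bigr)$, write $h_t$ via the spherical transform with the explicit $c$-function, and then use the Abel transform to pull back to the Euclidean heat kernel on $\mathbb{R}$. The spectral shift $-Q^2/4$, the factor $e^{-Qr/2}$ from the volume weight, and the appearance of the polynomial prefactor through the differential inverse of the Abel transform are all correctly identified. In that sense your approach and the paper's (cited) approach coincide.

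That said, what you have written is a plan, not a proof. The genuinely hard work---and the content of \cite{Anker-Damek-Yacoub}---lies precisely in the step you flag as ``the main obstacle'': obtaining two-sided bounds uniform over all regimes of $(t,r)$, in particular the matching lower bound when $r$ and $t$ are comparable. The inverse Abel transform is an explicit differential/integral operator only for special parities of $p$ and $q$; in the remaining cases it involves fractional integration, and controlling the resulting oscillatory integrals from below requires careful stationary-phase and Laplace-method arguments that your sketch does not supply. If you intend to reproduce the proof rather than cite it, those estimates are where essentially all the effort goes.
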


Theorem \ref{estimation-heat-kernel} will be now used to prove precise estimates of the fundamental solution 
$$G(r)=
\int_0^{+\infty} h_t(r) \,dt$$
for $\L$.
\begin{theorem}\label{estimation-Green-r-big}
	
	There are $c>0$ and $d>0$ such that 
	$$ c\exp(-Qr)\leq G(r)\leq d \exp(-Qr),  \hbox{  }r\geq 1.$$
\end{theorem}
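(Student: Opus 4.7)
The plan is to feed the two-sided heat-kernel estimate of Theorem \ref{estimation-heat-kernel} into $G(r)=\int_0^{+\infty}h_t(r)\,dt$ and carry out a Laplace-type analysis in $t$. The decisive observation is that the $t$-dependent piece of the exponent, $-Q^2 t/4 - r^2/(4t)$, attains its unique maximum at the saddle point $t_{*}=r/Q$ with value $-Qr/2$; combined with the factor $-Qr/2$ already present in the heat kernel, this produces the claimed decay $e^{-Qr}$, while fluctuations around $t_{*}$ and the polynomial prefactor should collapse to a constant uniform in $r\ge 1$.

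Concretely I would substitute $t=r\tau/Q$ and use the identity $\tau+1/\tau-2=(\tau-1)^2/\tau$ to rewrite the exponential as $e^{-Qr}\exp(-Qr(\tau-1)^2/(4\tau))$, which is sharply peaked at $\tau=1$. The change of variables turns the prefactor into
\[
(r\tau/Q)^{-3/2}(1+r)\Bigl(1+\tfrac{Q(1+r)}{r\tau}\Bigr)^{(n-3)/2}\cdot\frac{r}{Q},
\]
whose $r$-dependent part is of order $\sqrt{r}$ for $r\ge 1$, uniformly on any compact interval of $\tau\in(0,\infty)$.

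For the upper bound I would split the $\tau$-integral into $(0,1/2)$, $[1/2,2]$ and $(2,+\infty)$. On $[1/2,2]$ the inequality $(\tau-1)^2/\tau\ge(\tau-1)^2/2$ together with a standard Gaussian estimate gives an integral of order $1/\sqrt{r}$, which exactly balances the $\sqrt{r}$ prefactor. On $(2,+\infty)$ one uses $(\tau-1)^2/\tau\ge\tau/4$, so the resulting factor $\exp(-Qr\tau/16)$ easily dominates the polynomial contribution, which is uniformly bounded in $r\ge 1$ for $\tau\ge 2$. On $(0,1/2)$ the inequality $(\tau-1)^2/\tau\ge 1/(4\tau)$ produces $\exp(-Qr/(16\tau))$, which absorbs the singular factor $\tau^{-3/2}(1+O(1/\tau))^{(n-3)/2}$ coming from the prefactor. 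For the matching lower bound I would simply restrict the integral to $\tau\in[1,\,1+1/\sqrt{Qr}]$: on this interval the Gaussian factor is bounded below by $e^{-1/4}$, the prefactor is comparable to $\sqrt{r}$, and the interval has length $1/\sqrt{Qr}$, which together yield $G(r)\ge c\,e^{-Qr}$. The main technical nuisance is the coupled $r,t$-factor $(1+(1+r)/t)^{(n-3)/2}$, which has a singularity as $t\to 0$; the point is that this singularity sits precisely where $\exp(-Qr/(16\tau))$ decays arbitrarily fast in $r$, so it contributes negligibly.
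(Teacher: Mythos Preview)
Your Laplace/saddle-point approach via the substitution $t=r\tau/Q$ is correct and all three regimes in your splitting behave as you claim; the lower bound by localising to $[1,1+1/\sqrt{Qr}]$ is the standard complement and works. One minor imprecision: on $(2,\infty)$ the full prefactor still carries a factor $r^{1/2}$, not just something bounded in $r$; but since $\int_2^\infty \tau^{-3/2}e^{-Qr\tau/16}\,d\tau \le C(Qr)^{-1}e^{-Qr/8}$, this is harmless.

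The paper proceeds quite differently. Rather than a saddle-point analysis, it recognises the $t$-integrals as closed forms: for the lower bound it drops the factor $(1+(1+r)/t)^{(n-3)/2}\ge 1$ and uses the identity
\[
\int_0^\infty t^{-3/2}\exp\Bigl(-\tfrac{Q^2}{4}t-\tfrac{r^2}{4t}\Bigr)\,dt
=(4\pi)^{3/2}\,g_{3,Q^2/4}(r)
=(4\pi)^{1/2}\,\tfrac{Q}{2r}\,e^{-Qr/2},
\]
where $g_{n,\lambda}$ is the Green function of $\Delta-\lambda$ on $\mathbb{R}^n$. For the upper bound it splits at $t=1+r$: on $t>1+r$ it bounds $(1+(1+r)/t)^{(n-3)/2}\le 2^{(n-3)/2}$ and reuses the $n=3$ identity; on $t\le 1+r$ it bounds $(1+(1+r)/t)^{(n-3)/2}\le 2^{(n-3)/2}((1+r)/t)^{(n-3)/2}$, which produces the $t^{-n/2}$ kernel and the identity with $g_{n,Q^2/4}(r)$, whose asymptotic $g_{n,1}(r)\sim C\,r^{-(n-1)/2}e^{-r}$ as $r\to\infty$ is then invoked. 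Thus the paper trades the analytic work for two quoted facts about $g_{n,\lambda}$, while your argument is self-contained and makes transparent why the answer is exactly $e^{-Qr}$ through the saddle at $t_*=r/Q$.
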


\begin{proof}
	
	We have  $$1\leq 1+\frac{1+r}{t}, ~~ r\geq 0, t>0.$$ Then by Theorem \ref{estimation-heat-kernel},
	there exists a constant $c_1>0$ such that
	$$c_1 (1+r)\exp(-\frac{Qr}{2}) \int_0^{+\infty}t^{-\frac{3}{2}}\exp(-\frac{Q^2}{4}t-\frac{r^2}{4t})\,dt\leq G(r), ~~r> 0.$$
	In addition, according to \cite{selmi_these} p.97:
	$$\int_0^{+\infty}t^{-\frac{3}{2}}\exp(-\frac{Q^2}{4}t-\frac{r^2}{4t})\,dt=(4\pi)^{\frac{3}{2}}g_{3, \frac{Q^2}{4}}(r),$$
	where $g_{n,\lambda}$ is the Green function of $\Delta-\lambda$ in $\mathbb{R}^n$.
	Moreover,\begin{eqnarray*}
		g_{3, \frac{Q^2}{4}}(r) &=&  \frac{Q^2}{4}^\frac{1}{2}g_{3,1}((\frac{Q^2}{4})^\frac{1}{2}r) )\\
		&=& \frac{Q}{2} \frac{1}{4\pi} \frac{1}{r}\exp(-\frac{Qr}{2})
	\end{eqnarray*}
	As a result, \begin{equation*}
	\int_0^{+\infty}t^{-\frac{3}{2}}\exp(-\frac{Q^2}{4}t-\frac{r^2}{4t})\,dt
	= (4\pi)^{\frac{1}{2}}\frac{Q}{2r}\exp(-\frac{Qr}{2}).
	\end{equation*}
	Consequently, there exists a constant $c>0$ such that $$c \exp(-Qr)\leq G(r), ~~r>0.$$ Let us now
	focus on the upper estimate. As above, we use the estimate of Theorem \ref{estimation-heat-kernel}.
	We divide the integral into two parts: $]1+r, +\infty[$ and $]0,1+r[$. For the first interval,
	\begin{eqnarray*}
		t\geq 1+r &\Rightarrow&  1\geq\frac{1+r}{t}\\
		&\Rightarrow& 2\geq \frac{1+r}{t}+1\\
		&\Rightarrow& 2^\frac{n-3}{2}\geq ( \frac{1+r}{t}+1 )^\frac{n-3}{2}.
	\end{eqnarray*}
	Then there exists $d_1>0$ such that $$\int_{r+1}^{+\infty}h_t(r)\,dt\leq d_1 (1+r)
	\exp(-\frac{Qr}{2}) \int_0^{+\infty}t^{-\frac{3}{2}}\exp(-\frac{Q^2}{4}t-\frac{r^2}{4t})\,dt,~~~~r>
	0.$$ Though as before:
	$$\int_0^{+\infty}t^{-\frac{3}{2}}\exp(-\frac{Q^2}{4}t-\frac{r^2}{4t})\,dt = (4\pi)^{\frac{1}{2}}\frac{Q}{2r}\exp(-\frac{Qr}{2}),~~ r>0.$$
	As a result, there exists $d_2>0$ such that
	$$\int_{r+1}^{+\infty}h_t(r)\,dt\leq d_2 \frac{1+r}{r} \exp(-Qr), ~~~r>0.$$
	Furthermore, $r\mapsto \frac{1+r}{r}$ is bounded on $[1,+\infty[$, which implies that
	$$\int_{r+1}^{+\infty}h_t(r)\,dt\leq d_3 \exp(-Qr), ~~~r\geq 1.$$
	Secondly, \begin{eqnarray*}
		t\leq 1+r &\Rightarrow&  1\leq\frac{1+r}{t}\\
		&\Rightarrow& 1+\frac{1+r}{t}\leq 2 \frac{1+r}{t}\\
		&\Rightarrow& (1+\frac{1+r}{t})^\frac{n-3}{2}\leq 2^\frac{n-3}{2} (\frac{1+r}{t})^\frac{n-3}{2}.
	\end{eqnarray*}
	Hence, there exists $d_4>0$ such that, \begin{eqnarray*}
		\int_{0}^{1+r}h_t(r)\,dt &\leq &  d_4 (1+r)^\frac{n-1}{2} \exp(-\frac{Qr}{2}) \int_0^{1+r}t^{-\frac{n}{2}}\exp(-\frac{Q^2}{4}t-\frac{r^2}{4t})\,dt , ~~~r>0\\
		&\leq& d_4 (1+r)^\frac{n-1}{2} \exp(-\frac{Qr}{2}) \int_0^{+ \infty}t^{-\frac{n}{2}}\exp(-\frac{Q^2}{4}t-\frac{r^2}{4t})\,dt , ~~~r>0.
	\end{eqnarray*}
	Again according to \cite{selmi_these} p.97,
	\begin{eqnarray*}
		\int_0^{+ \infty}t^{-\frac{n}{2}}\exp(-\frac{Q^2}{4}t-\frac{r^2}{4t})\,dt &=& (4\pi)^{\frac{n}{2}}g_{n, \frac{Q^2}{4}}(r)\\
		&=& (4\pi)^{\frac{n}{2}} (\frac{Q^2}{4})^\frac{n-2}{2}g_{n, 1}(\frac{Q}{2}r)\\
		&\underset{r\to+\infty}{\sim}& (4\pi)^{\frac{n}{2}} (\frac{Q^2}{4})^\frac{n-2}{2} \frac{1}{2 (2\pi)^{\frac{n-1}{2}}}\frac{\exp(-\frac{Qr}{2})}{(\frac{Qr}{2})^\frac{n-1}{2}}.
	\end{eqnarray*}
	Using that $r\mapsto \frac{1+r}{r}$ is bounded on $[1,+\infty[$, we get that there is a constant
	$d>0$ such that $$ G(r)\leq d \exp(-Qr),~~~~r\geq 1.$$
	
\end{proof}

\begin{theorem}\label{estimation-Green-small-r}
	For $0 < r\leq 1$ we have the following estimate for $G$ and the left-invariant gradient of $\nabla G$:
	\begin{align}
	G(r)\asymp r^{2-n}, \label{Gestim}\\
	\nabla G(r)\leq Cr^{1-n}. \label{gradestim}
	\end{align}
\end{theorem}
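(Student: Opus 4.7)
The plan is to derive both estimates by inserting the sharp heat-kernel bound of Theorem \ref{estimation-heat-kernel} into the representation $G(r)=\int_0^\infty h_t(r)\,dt$ and splitting the time integral at $t=1+r$. Throughout we work on $0<r\leq 1$, so the prefactors $(1+r)$ and $\exp(-Qr/2)$ lie between two positive constants and will be absorbed into the implicit constants.

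For $G(r)\asymp r^{2-n}$: on the tail $t\geq 1+r$ the inequality $1+(1+r)/t\leq 2$ dominates the integrand by $Ct^{-3/2}\exp(-Q^2 t/4)$, giving a contribution bounded uniformly in $r\in(0,1]$. On the head $0<t\leq 1+r\leq 2$ one has $1+(1+r)/t\asymp 1/t$, so the integrand is comparable to $t^{-n/2}\exp(-r^2/(4t))$, the factor $\exp(-Q^2 t/4)$ being bounded on this finite interval. The substitution $u=r^2/(4t)$ converts the head integral into
\[
\int_0^{1+r} t^{-n/2}\exp(-r^2/(4t))\,dt \;\asymp\; r^{2-n}\int_{r^2/(4(1+r))}^{\infty} u^{n/2-2} e^{-u}\,du,
\]
and the right-hand integral converges to $\Gamma(n/2-1)$ as $r\to 0^+$, because $n\geq 3$ forces $n/2-2>-1$. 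Combining the two pieces gives the two-sided estimate.

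For the gradient bound, the plan is to repeat the scheme with a pointwise heat-kernel gradient estimate of the form $|\nabla h_t(r)|\leq C(r/t+t^{-1/2})\,h_{ct}(r)$, valid on harmonic $NA$ by standard Li--Yau type estimates on complete manifolds with bounded geometry. The same split of the $t$-integral and the same substitution $u=r^2/(4t)$ produce an integral of type $r^{1-n}\int u^{n/2-1}e^{-u}\,du$ in the small-$t$ regime and yield $|\nabla G(r)|\leq Cr^{1-n}$ on $(0,1]$.

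The main obstacle is justifying the heat-kernel gradient estimate in this setting, as it is not established in the excerpt. An alternative that sidesteps it is to pass to the algebraic coordinates $(v,z,a)$ of Section \ref{NAgroups}: from \eqref{expression-of-L} one reads off that $\L$ has smooth coefficients whose principal symbol coincides with the Euclidean $\Delta$ at $0_s$, so one can build a local parametrix $\widetilde G$ for $\L$ near $0_s$ satisfying $\widetilde G(x)\asymp |x|^{2-n}$ and $|\nabla\widetilde G(x)|\leq C|x|^{1-n}$ by the classical Gilbarg--Trudinger construction. Standard interior elliptic regularity then shows that $G-\widetilde G$ is locally $C^1$ near $0_s$, giving both the upper bound on $|\nabla G|$ and an independent check of \eqref{Gestim}.
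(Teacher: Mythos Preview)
Your argument for \eqref{Gestim} is correct and follows the paper's scheme closely: the same split of the time integral at $t=1+r$ and the same identification of the short-time contribution with a Gamma-type integral. The only cosmetic difference is that the paper, instead of doing the substitution $u=r^2/(4t)$ by hand, recognises $\int_0^\infty t^{-n/2}\exp(-Q^2t/4-r^2/(4t))\,dt$ as $(4\pi)^{n/2}g_{n,Q^2/4}(r)$, the Green function of $\Delta-Q^2/4$ on $\R^n$, and then quotes its asymptotics near $0$; your direct computation gives the same outcome and is arguably more self-contained. For the lower bound the paper does not even split: it uses $\frac{1+r}{t}\le 1+\frac{1+r}{t}$ over the whole half-line to reduce immediately to $g_{n,Q^2/4}(r)$.

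For \eqref{gradestim}, what you flag as ``the main obstacle'' is precisely the route the paper takes: it invokes the pointwise gradient bound for $h_t$ proved in \cite{Anker-Damek-Yacoub}, Corollary~5.49, and then repeats the same time-splitting calculation. So your first plan is the paper's plan, and the missing ingredient is available by citation rather than needing a Li--Yau argument from scratch. Your parametrix alternative via the local form \eqref{expression-of-L} of $\L$ is a legitimate independent route and would also work, but it is not needed here.
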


\begin{proof}
	
	We have $$\frac{1+r}{t}\leq 1+\frac{1+r}{t}, ~~ r\geq 0, t>0,$$ then by Theorem
	\ref{estimation-heat-kernel} there exists a constant $c_1>0$ such that
	\begin{equation*}
	c_1 (1+r)\int_0^{+\infty}t^{-\frac{3}{2}} (\frac{1+r}{t})^{\frac{n-3}{2}}\exp(-\frac{Q^2}{4}t-\frac{Qr}{2}-\frac{r^2}{4t})\,dt\leq G(r), ~~r> 0.
	\end{equation*}
	Furthermore, $$1\leq 1+r,\hbox{    } r>0,$$ and $$\exp(-\frac{Q}{2})\leq \exp(-\frac{Qr}{2}),\hbox{
	} 0 <r\leq 1.$$ So there exists a constant $c_2>0$ such that
	$$c_2 \int_0^{+ \infty}t^{-\frac{n}{2}}\exp(-\frac{Q^2}{4}t-\frac{r^2}{4t})\,dt\leq G(r), \hbox{ } 0<r\leq 1.$$
	Though, according to \cite{selmi_these}, and using same notation as in the preceding theorem we
	have
	\begin{eqnarray*}
		\int_0^{+ \infty}t^{-\frac{n}{2}}\exp(-\frac{Q^2}{4}t-\frac{r^2}{4t})\,dt &=& (4\pi)^{\frac{n}{2}}g_{n, \frac{Q^2}{4}}(r)\\
		&=& (4\pi)^{\frac{n}{2}} (\frac{Q^2}{4})^\frac{n-2}{2}g_{n, 1}(\frac{Q}{2}r).
	\end{eqnarray*}
	In addition,
	$$g_{n,1}(r)   \underset{r\to 0}{\sim}   \frac{1}{4\pi^{\frac{n+1}{2}}}\gamma(\frac{n}{2}-1)\frac{1}{r^{n-2}}.$$
	So there exists a constant $c>0$ such that
	$$c~~r^{2-n}\leq G(r), \hbox{ }0<r\leq 1.$$
	Now, let us focus on the upper estimates: As above, we use estimates of Theorem
	\ref{estimation-heat-kernel}. We divide the integral into two parts: $]1+r, +\infty[$ and
	$]0,1+r[$. At first, \begin{eqnarray*}
		t\geq 1+r &\Rightarrow&  1\geq\frac{1+r}{t}\\
		&\Rightarrow& 2^\frac{n-3}{2}\geq ( \frac{1+r}{t}+1 )^\frac{n-3}{2}.
	\end{eqnarray*}
	Then there exists $d_1>0$ such that $$\int_{r+1}^{+\infty}h_t(r)\,dt\leq d_1 (1+r)
	\exp(-\frac{Qr}{2}) \int_0^{+\infty}t^{-\frac{3}{2}}\exp(-\frac{Q^2}{4}t-\frac{r^2}{4t})\,dt~~~~r>
	0.$$ Though according to \cite{selmi_these} as before:
	$$\int_0^{+\infty}t^{-\frac{3}{2}}\exp(-\frac{Q^2}{4}t-\frac{r^2}{4t})\,dt = (4\pi)^{\frac{1}{2}}\frac{Q}{2r}\exp(-\frac{Qr}{2}),~~ r>0.$$
	However, for $0<r\leq 1$, $$r^{-1}\leq r^{2-n},$$ so  $$ \int_{r+1}^{+\infty}h_t(r)\,dt\leq d_2
	r^{2-n}$$ for some $d_2>0$. Secondly, \begin{eqnarray*}
		t\leq 1+r &\Rightarrow&  1\leq\frac{1+r}{t}\\
		&\Rightarrow& (1+\frac{1+r}{t})^\frac{n-3}{2}\leq 2^\frac{n-3}{2} (\frac{1+r}{t})^\frac{n-3}{2}.
	\end{eqnarray*}
	Hence, there exists $d_3>0$ such that, \begin{eqnarray*}
		\int_{0}^{1+r}h_t(r)\,dt &\leq &  d_3 ~~\int_0^{1+r}t^{-\frac{n}{2}}\exp(-\frac{Q^2}{4}t-\frac{r^2}{4t})\,dt , ~~~ 1\geq r>0.
	\end{eqnarray*}
	As before, \begin{eqnarray*}
		\int_0^{+ \infty}t^{-\frac{n}{2}}\exp(-\frac{Q^2}{4}t-\frac{r^2}{4t})\,dt
		&=& (4\pi)^{\frac{n}{2}} (\frac{Q^2}{4})^\frac{n-2}{2}g_{n, 1}(\frac{Q}{2}r).
	\end{eqnarray*}
	In addition,
	$$g_{n,1}(r)   \underset{r\to 0}{\sim}   \frac{1}{4\pi^{\frac{n+1}{2}}}\gamma(\frac{n}{2}-1)\frac{1}{r^{n-2}}.$$
	Hence, there exists $d_3>0$ such that, $$\int_{0}^{1+r}h_t(r)\,dt \leq d_3 ~~ r^{2-n}$$ and we get
	{ \eqref{Gestim}. \eqref{gradestim} follows directly from the estimate for the gradient of $h_t$ contained in \cite{Anker-Damek-Yacoub}, Corollary 5.49 and a similar calculation as above.}
\end{proof}


\subsection{The Green kernel}
Let
$$G(s,s_1)= G(s_1^{-1}s ), \quad 
\mbox{for}\quad s, s_1\in NA .$$
Then $G(\cdot, \cdot)$ is the Green function for $\L $ on $NA$ i.e. 
\eqref{delta} and \eqref{potential} are satisfied with respect to the measure $dm_L$.
$$ G(f)=\int_{NA}
G(s,s_1)f(s_1)dm_L(s_1) $$
is the Green operator
i.e 
$$ \L( G (f) ){=} - f,$$
for $f\in \mathcal{C}_c^{\infty}(S)$.
The proof of the above properties is standard provided all the integrals are well defined which follows, in particular, from the estimates included in Theorems \ref{estimation-heat-kernel}, \ref{estimation-Green-r-big} and \ref{estimation-Green-small-r}.

Now we prove that the Green operator preserves radiality as it does on the Euclidean space. Let $G_B$ be the Green
function for $B=B_R$, see \eqref{ballR}
Then
$$
G_B(s,s_1)=G(s,s_1)-\tilde{h}(s,s_1),$$ where for a fixed $s_1$, $\tilde{h}(\cdot , s_1)$ is the $\L$-harmonic function in $B$ with the boundary value $G(\cdot , s_1)$.

\begin{theorem}\label{G_Bf-radial}
	If a continuous function $f$ is radial then $G_Bf$ is
	radial too.
\end{theorem}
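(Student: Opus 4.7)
The plan is to reduce the problem to a boundary value problem for an ordinary differential equation by exploiting the fact that $\mathcal{L}$ preserves radiality.

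First, I would recall that since $B=B_{R}$ is a regular bounded domain, $G_{B}f$ is characterized as the unique continuous function on $\overline{B}$ that vanishes on $\partial B$ and satisfies $\mathcal{L}(G_{B}f)=-f$ in $B$ (in the distributional, and here classical, sense). Thus it suffices to produce a \emph{radial} continuous function $u$ on $\overline{B}$ with these properties; uniqueness will then force $u=G_{B}f$ and conclude the proof.

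Second, I would use the decomposition $\mathcal{L}=L_{0}+L_{1}$ from \eqref{expression-of-L}. Any radial function on $NA$ depends only on $(|X|,|Z|,a)$, so in particular only on $(|X|,Z,a)$, and hence is annihilated by $L_{1}$. In geodesic coordinates $s=(r,\theta)$ the operator $L_{0}$ acting on a smooth radial $u(s)=v(r)$ reduces, as noted in the paper, to the radial operator
$$
L_{\mathrm{rad}}=\partial_{r}^{2}+\Bigl(\tfrac{p+q}{2}\coth(r/2)+\tfrac{q}{2}\tanh(r/2)\Bigr)\partial_{r}.
$$
Consequently $\mathcal{L}u(s)=(L_{\mathrm{rad}}v)(r(s))$, so $\mathcal{L}$ sends radial functions to radial functions.

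Third, viewing the given radial $f$ as a continuous function of $r\in[0,R]$, I would solve the two-point problem
$$
L_{\mathrm{rad}}v=-f\text{ on }(0,R),\qquad v(R)=0,\qquad v\text{ regular at }0.
$$
This is a linear second-order ODE whose homogeneous solutions are the constant $1$ and an explicit quadrature obtained from the standard reduction-of-order (integrating the density $2^{p+q}\sinh(r/2)^{p+q}\cosh(r/2)^{q}$ appearing in \eqref{volume}); variation of parameters then yields a unique solution $v$ on $[0,R]$ satisfying both boundary conditions. Because the coefficient of $\partial_{r}$ behaves like $(p+q+1-1)/r$ as $r\to 0$, the asymptotics at the regular singular point are exactly those of the radial Laplacian in dimension $n=p+q+1$, so $v$ has the regularity needed for $u(s):=v(d(s,0_{s}))$ to be $C^{2}$ on $B$ and continuous on $\overline{B}$.

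Finally, this $u$ is radial, continuous on $\overline{B}$, vanishes on $\partial B$, and satisfies $\mathcal{L}u=-f$ in $B$ by the formula for the radial part of $\mathcal{L}$. Uniqueness of the Dirichlet problem on the regular domain $B$ gives $u=G_{B}f$, so $G_{B}f$ is radial. I expect the only subtlety to be the regularity of $v$ at the origin, i.e.\ verifying that the ODE solution really lifts to a $C^{2}$ function on the ball rather than developing the $r^{2-n}$ singularity of the second homogeneous solution; this is handled exactly as for the Euclidean radial Laplacian by selecting the boundedness branch at $r=0$.
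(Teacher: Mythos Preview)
Your approach is correct and the argument goes through, but it is genuinely different from the paper's. You construct a radial candidate by solving the one-dimensional boundary value problem for $L_{\mathrm{rad}}$ and then appeal to uniqueness of the Dirichlet problem. The paper instead averages: it introduces the spherical mean operator $M$, invokes the harmonic-manifold identity $\mathcal{L}\circ M = M\circ \mathcal{L}$ (citing Rouvi\`ere), and observes that $M(G_Bf)$ solves the same Poisson problem with zero boundary data as $G_Bf$ does, whence $G_Bf=M(G_Bf)$ is radial. Both proofs ultimately hinge on the same uniqueness step; the difference is in how the radial competitor is produced. The paper's route is shorter and sidesteps entirely the regularity issue at $r=0$ that you correctly flag as the main subtlety of the ODE construction; your route is more explicit and self-contained, requiring nothing about harmonic manifolds beyond the formula for $L_{\mathrm{rad}}$, at the cost of having to check that the bounded Frobenius branch really lifts to a $C^2$ function on the ball.
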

\begin{proof}
	It is enough to prove that $$G_B(f)=M(G_Bf),$$ where for a continuous function $u$
	$$M(u)(r)=\frac{1}{\sigma_e(r)}\int_{S_e(r)}u(r,\theta)\,dr d\rho _0 (\theta ) .$$
	Here $d\rho _0$ is the surface measure on the unit sphere in $\mathfrak{s}$ and $\sigma _e(r)$ is the measure of the sphere $S_e(r)$.
	In addition $NA$ is a harmonic
	manifold so $$\L(M (u))=M(\L(u)),$$ see  \cite{Rouvière}. Consequently,
	\begin{equation*}
	\begin{aligned}
	\L(M(G_Bf)) &= M(\L(G_Bf))\\
	&= M(-f)
	\end{aligned}
	\end{equation*}
	But if $f$ is radial then $M(f)=f$. As a result $$\L(G_Bf-M(G_Bf))=0, \hbox{ in }B.$$ Moreover, $G_Bf=0$ that on the boundary $\partial B$ of $B$ and so is
	$M(G_Bf)$ on $\partial B$. We conclude that 
	$$G_Bf=M(G_Bf).$$
\end{proof}

\begin{corollary}
	Let $f$ be a radial continuous function in $NA$ such that $Gf$  is well defined. Then $Gf$ is
	radial too.
	
\end{corollary}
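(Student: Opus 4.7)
The plan is to exhaust $NA$ by the balls $B_R$, apply Theorem \ref{G_Bf-radial} to obtain radiality of $G_{B_R}f$ for each $R$, and pass to the limit $R\to\infty$.

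First I would establish the monotone pointwise convergence $G_{B_R}(s,s_1)\nearrow G(s,s_1)$ as $R\to\infty$ for $s\neq s_1$. Writing $G_{B_R}(s,s_1)=G(s,s_1)-\tilde h_R(s,s_1)$ as in the paragraph preceding Theorem \ref{G_Bf-radial}, it suffices to show that $\tilde h_R(\cdot,s_1)\to 0$ on compacta. Here I would invoke the estimate $G(r)\asymp e^{-Qr}$ from Theorem \ref{estimation-Green-r-big}: the boundary values of $\tilde h_R(\cdot,s_1)$ on $\partial B_R$ coincide with $G(\cdot,s_1)|_{\partial B_R}$ and therefore tend to $0$ uniformly as $R\to\infty$; the maximum principle then gives $\sup_{s\in B_R}\tilde h_R(s,s_1)\to 0$. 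Monotonicity in $R$ is the usual domain monotonicity of the Dirichlet Green function and also follows from the same maximum-principle comparison.

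Next I would deduce convergence of potentials. Since $Gf$ is well defined we have $G|f|<\infty$, so we may split $f=f^+-f^-$ with both $f^\pm$ continuous, radial, nonnegative and with finite $G$-potential. Monotone convergence applied to the increasing kernels $G_{B_R}$ yields
$$G_{B_R}f^\pm(s)\nearrow Gf^\pm(s)$$
for every $s\in NA$, and subtracting gives $G_{B_R}f(s)\to Gf(s)$ pointwise. Theorem \ref{G_Bf-radial} tells us each $G_{B_R}f$ is radial on $B_R$, and the pointwise limit of a sequence of radial functions is radial, which finishes the argument.

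The genuinely delicate step is the first one, namely showing $\tilde h_R\to 0$. The sharp decay of $G$ at infinity supplied by Theorem \ref{estimation-Green-r-big} is exactly what makes this work; without it the approximation by Dirichlet Green functions would not recover $G$ in the limit. The splitting $f=f^+-f^-$ relies on $Gf$ being defined as an absolutely convergent integral, which I am reading into the hypothesis that $Gf$ is well defined; if one only assumed conditional convergence, a small additional argument via dominated convergence on a suitable majorant would be needed in place of monotone convergence.
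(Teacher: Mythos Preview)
Your proof is correct and follows the same approach as the paper: exhaust $NA$ by balls $B_n$, invoke Theorem~\ref{G_Bf-radial} for radiality of $G_{B_n}f$, and pass to the limit. The paper's proof is a single line asserting $Gf=\lim_{n\to\infty}G_{B_n}f$ and citing the preceding theorem; you have simply supplied the justification of that limit (domain monotonicity, $\tilde h_R\to 0$ via the decay of $G$, and monotone convergence after splitting $f=f^+-f^-$) that the paper leaves implicit.
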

\begin{proof}
	We have $Gf= \lim\limits_{n\to +\infty} G_{B_n}f$. Hence by the preceding proposition $Gf$ is
	radial on $NA$ as well.
\end{proof}

We finish this section by proving that
if $\varphi $ is radial then  
$U^{\varphi}_B$ is radial as well. The statement holds both for the Euclidean space and $NA$ and it follows from the fact that the Green operator preserves radiality.
More precisely, we have the following lemma. 
\begin{lemma}\label{radiality-in-bounded}
	We assume that $\varphi$ satisfies $(H_1)$-$(H_3)$ and radial with respect to the first variable.
	Then for every $c>0$ there exists a unique $u\in
	\mathcal{C}(\overline{B})$ radial on $B$ such that \begin{equation}\label{radial-solution}
	\left\{
	\begin{array}{ll}
	\L u=\varphi(\cdot,u), & \hbox{in  $B$;} \\
	u=c, & \hbox{on  $\partial B$,}
	\end{array}
	\right.
	\end{equation}
	Furthermore, $$u+G_B(\varphi(\cdot,u))=c, \hbox{ in }B.$$ 
\end{lemma}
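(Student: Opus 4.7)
The plan is to combine the general existence-and-uniqueness correspondence \eqref{oneoner} with the fact, already established in Theorem \ref{G_Bf-radial}, that $G_B$ preserves radiality of its arguments.

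First, I would establish existence and uniqueness without imposing radiality. Since $\L 1=0$, the unique $\L$-harmonic function in $B$ with boundary value $c$ is the constant function $c$ itself. Applying the correspondence \eqref{oneoner} from \cite{Ghardallou2}, there is a unique $u\in\mathcal{C}(\overline{B})$ with $0\leq u\leq c$ satisfying
$$ u+G_B(\varphi(\cdot,u))=c \quad \text{in } B.$$
Applying $\L$ to both sides and using \eqref{delta} yields $\L u=\varphi(\cdot,u)$ in $B$ in the sense of distributions. The boundary condition $u=c$ on $\partial B$ is met because $G_B(\varphi(\cdot,u))$ vanishes on $\partial B$ in a regular bounded domain, as a consequence of $(H_1)$. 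Uniqueness among \emph{all} continuous solutions follows by applying Lemma \ref{comparaison-semi-elliptic} twice, to both $u-v$ and $v-u$, for any pair of solutions sharing the same boundary value $c$.

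Second, for radiality I would repeat the existence argument in the closed subspace $X\subset\mathcal{C}(\overline{B})$ of radial functions. The map $T(v)=c-G_B(\varphi(\cdot,v))$ preserves radiality: if $v\in X$, then $\varphi(\cdot,v)$ is radial by the radiality hypothesis on $\varphi$ in its first variable, and Theorem \ref{G_Bf-radial} then forces $G_B(\varphi(\cdot,v))$ to be radial as well. Noting that $0$ is a radial subsolution of \eqref{radial-solution} (by $(H_3)$) and $c$ a radial supersolution (since $\L c=0$ and $\varphi(\cdot,c)\geq 0$), the existence machinery of \cite{Ghardallou2} carried out inside $X$ produces a radial $\tilde u$ with $\tilde u+G_B(\varphi(\cdot,\tilde u))=c$ in $B$. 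By the uniqueness from the previous paragraph, $\tilde u=u$, and therefore $u$ itself is radial.

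The main obstacle is the convergence step inside the radial class, since the naive Picard iteration $u_{n+1}=T(u_n)$ oscillates rather than being monotone. This is handled either by replacing $T$ with the shifted scheme associated with $\L-\lambda$ for $\lambda$ large enough that $t\mapsto\varphi(\cdot,t)-\lambda t$ becomes nonincreasing, yielding a genuinely monotone iteration between the radial sub- and supersolutions $0$ and $c$, or by a Schauder fixed point argument on a suitable closed convex bounded subset of $X$, using the compactness of $G_B$ acting on locally Kato-class data guaranteed by $(H_1)$. In either route, the radial continuous functions form a closed subspace stable under every step of the standard existence proof of \cite{Ghardallou2}, so no step leaves the radial class.
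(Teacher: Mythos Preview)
Your proposal is correct and follows essentially the same route as the paper: the paper sets $\alpha=\min_{\overline B}\bigl(c-G_B(\varphi(\cdot,c))\bigr)$, defines $\mathcal K=\{u\in\mathcal C(\overline B):\alpha\le u\le c,\ u\text{ radial}\}$ and $T(u)=c-G_B(\varphi(\cdot,u))$, checks $T(\mathcal K)\subset\mathcal K$ (using that $G_B$ preserves radiality), and applies Schauder. Your version wraps this same Schauder-in-the-radial-class step with an explicit uniqueness argument via Lemma~\ref{comparaison-semi-elliptic}, which the paper leaves implicit.
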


\begin{proof}
	The proof is the same as in \cite{Ghardallou2}.\\ 
	Let $\alpha=\min\{c-G_B(\varphi(\cdot,c))(s), s\in NA\}$ and
	$$\mathcal{K}=\{u\in\mathcal{C}(\overline{B}),~~~~ \alpha \leq u \leq c,~~~~u \hbox{ is radial in }B \}.$$
	$\mathcal{K}$ is nonempty, closed for the uniform norm and convex. We
	consider \begin{equation*}
	\begin{aligned}
	T&: & \mathcal{C}(\overline{B})\to&  \mathcal{C}(\overline{B}) \\
	&  &  u\mapsto &c-G_B(\varphi(\cdot,u)).
	\end{aligned}
	\end{equation*}
	and we prove that $T(\mathcal{K})\subset \mathcal{K}$. Then Schauder theorem implies that $T$ has a fixed point in $\mathcal{K}$ i.e. a radial positive solution of \eqref{radial-solution}.
\end{proof}
\section{{ Harnack-type inequality}}\label{Harnack-type-ine}
In this section we will prove Theorem \ref{Mainthe-Harnack-ineq} - the Harnack-type inequality \footnote{See introduction.}. 
The proof is based on the following three lemmas. First we need to compare the Green function for
$L$ with the one for the Laplace operator $\Delta $ on balls $B_r(a)=\{ x\in \R ^d: \| x-a\| <r\} $ of small radii contained in
$\Omega $. More precisely, we intend to compare $G_{B_r}^L$ with $G_{B_r}^{\Delta}$ uniformly,
provided the centers of the balls do not approach $\partial \Omega $.

\begin{lemma}\label{comparaison-Green-functions}
	Let $\overline D$ be a compact subset of $ \Omega $ such that $\mbox{dist}(\overline D,\partial
	\Omega )=2r_0$ for some $r_0>0$. There is a constant $M>0$
	such that for every $r\leq r_0$ and
	for every ball $B_r=B(a,r)$, $a\in \overline D$
	\begin{equation}\label{comparison}
	M^{-1} G_{B_r}^{\Delta}(x,y)\leq G_{B_r}^L(x,y)\leq M \, G_{B_r}^{\Delta}(x,y), \quad x,y\in B_r,\end{equation}
	where $G_{B_r}^L$ is the Green function for $L$ in $B_r$.
\end{lemma}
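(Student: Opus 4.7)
The plan is to prove the comparison by a rescaling argument that reduces the problem to a uniform two-sided Green function bound on the unit ball for a $C^1$-bounded, uniformly elliptic family of operators.

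First, by smoothness of the coefficients of $L$ and compactness of $\overline{D}\subset \Omega$, there exist uniform constants $0<\lambda\leq \Lambda$ and $K>0$ such that on the open neighborhood $N_{r_0}:=\bigcup_{a\in \overline{D}}B_{r_0}(a)\subset \Omega$ the principal symbol of $L$ is uniformly elliptic with eigenvalues in $[\lambda,\Lambda]$ and all coefficients of $L$ are bounded in $C^1(N_{r_0})$ by $K$; moreover, $L1\leq 0$ forces the zero-order coefficient to be nonpositive. For each $a\in \overline{D}$ and $r\leq r_0$ I would apply the affine dilation $x=a+r\xi$, $\xi \in B_1(0)$, under which the Green functions transform as
$$G_{B_r(a)}^L(a+r\xi, a+r\eta) = r^{2-d}\, G_{B_1(0)}^{L^{(a,r)}}(\xi,\eta), \qquad G_{B_r(a)}^{\Delta}(a+r\xi, a+r\eta) = r^{2-d}\, G_{B_1(0)}^{\Delta}(\xi,\eta),$$
where the rescaled operator $L^{(a,r)}$ on $B_1(0)$ has ellipticity constants still in $[\lambda,\Lambda]$, principal $C^1$-coefficients bounded by a constant depending only on $K$, and lower-order terms of size $O(r_0)$. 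Hence the family $\mathcal F:=\{L^{(a,r)}:a\in \overline{D},\ r\leq r_0\}$ lies in a uniformly $C^1$-bounded, uniformly elliptic set of operators on $B_1(0)$ with nonpositive zero-order coefficient.

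The main step is then a uniform two-sided Green function bound
$$c_1\, G_{B_1(0)}^{\Delta}(\xi,\eta)\leq G_{B_1(0)}^{\tilde L}(\xi,\eta)\leq c_2\, G_{B_1(0)}^{\Delta}(\xi,\eta),\qquad \xi,\eta\in B_1(0),$$
valid for every $\tilde L\in \mathcal F$ with $c_1,c_2$ depending only on $\lambda,\Lambda,K,d$; applying this to $\tilde L=L^{(a,r)}$ and undoing the rescaling yields \eqref{comparison} with $M=\max(c_2,c_1^{-1})$ independent of $a\in \overline{D}$ and $r\leq r_0$. The interior estimate $G_{B_1(0)}^{\tilde L}(\xi,\eta)\asymp |\xi-\eta|^{2-d}$, valid when $|\xi-\eta|$ is at most half the distance of $\xi,\eta$ to $\partial B_1(0)$, is a classical consequence of fundamental-solution estimates and Harnack's inequality for uniformly elliptic operators, with constants structurally determined by $\lambda,\Lambda,K$.

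The main obstacle is the matching boundary behavior: one needs that $G_{B_1(0)}^{\tilde L}(\cdot,\eta)$ and $G_{B_1(0)}^{\Delta}(\cdot,\eta)$ vanish at $\partial B_1(0)$ at comparable rates, uniformly in $\tilde L\in \mathcal F$. This is exactly the content of the boundary Harnack principle on the smooth domain $B_1(0)$, whose constants depend only on the ellipticity and $C^1$-bounds on the coefficients. Combining the interior estimate, the boundary Harnack principle, and the maximum principle (to interpolate the ratio $G_{B_1(0)}^{\tilde L}/G_{B_1(0)}^{\Delta}$ over the whole ball) yields the desired uniform comparison, completing the proof.
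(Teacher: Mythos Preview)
Your approach is essentially the same as the paper's: reduce to the unit ball by the affine rescaling $x=a+r\xi$, observe that the rescaled operators $L^{(a,r)}$ lie in a fixed uniformly elliptic, uniformly H\"older (or $C^1$) family, and use that the Green-function comparison $G_{B_1}^{\tilde L}\asymp G_{B_1}^{\Delta}$ holds with constants depending only on the structural bounds of the family. The one difference is the source of that last uniform comparison: the paper simply invokes the Hueber--Sieveking theorem (uniform bounds for quotients of Green functions on $C^{1,1}$ domains, with constants depending only on the ellipticity and H\"older bounds of the class $\mathcal F_{\lambda,\alpha}$), whereas you sketch its proof from scratch via interior fundamental-solution estimates plus the boundary Harnack principle. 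Your route is more self-contained but correspondingly less precise as written; the paper's route is shorter because the heavy lifting is packaged in the cited theorem. Either way, the rescaling transformation $G_{B_r}^L(x,y)=r^{2-d}G_{B_1}^{L^{(a,r)}}(\psi_r^{-1}(x),\psi_r^{-1}(y))$ and the stability of the operator class under rescaling are the essential points, and you have those right.
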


The next lemma is the Harnack inequality for positive $L$-harmonic functions in $B(a,r)$ with a
constant independent of $a$ and $r$.

\begin{lemma}\label{constant-harnack-indepandence}
	Under the same hypotheses as in Lemma \ref{comparaison-Green-functions},  there exists a constant
	$M_1>0$ such that for every $r\in[0,r_0[$ and for every positive $L$-harmonic function $h$ in
	$B(a,r)$
	\begin{equation}\label{harnack-indep}
	\sup_{x\in B(a,\frac{r}{4})} h(x) \leq M_1
	\inf_{x\in B(a,\frac{r}{4})} h(x). \end{equation}
	
\end{lemma}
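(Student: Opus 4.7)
The plan is to derive the uniform Harnack inequality \eqref{harnack-indep} for positive $L$-harmonic functions from the classical and uniform Harnack inequality for $\Delta$-harmonic functions, using the Green-function comparison of Lemma \ref{comparaison-Green-functions} as a bridge. The transfer will be carried out through a Poisson-kernel comparison on slightly shrunken balls $B'=B(a,r/2)$.

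More precisely, I would fix $a\in\overline{D}$, $r\in(0,r_0]$, and a positive $L$-harmonic function $h$ on $B(a,r)$, and pass to $B'=B(a,r/2)$. Since $\overline{B'}\subset B(a,r)$, $h$ is continuous and bounded on $\overline{B'}$, and the smoothness of $L$ together with the regularity of $\partial B'$ give the Poisson representation
$$h(x)=\int_{\partial B'} h(y)\,P^L_{B'}(x,y)\,dS(y),\qquad x\in B'.$$
For $\Delta$ the same formula holds with an explicit Poisson kernel whose ratio satisfies $P^{\Delta}_{B'}(x_1,y)/P^{\Delta}_{B'}(x_2,y)\leq C_0$ for all $x_1,x_2\in B(a,r/4)$ and $y\in\partial B'$, with $C_0$ a purely dimensional constant. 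Since $r/2\leq r_0$, Lemma \ref{comparaison-Green-functions} applies to $B'$ and yields $M^{-1}G^{\Delta}_{B'}\leq G^L_{B'}\leq M\,G^{\Delta}_{B'}$. If this is upgraded to a pointwise two-sided comparison
$$(M')^{-1}P^{\Delta}_{B'}(x,y)\leq P^L_{B'}(x,y)\leq M'\,P^{\Delta}_{B'}(x,y),\qquad x\in B',\;y\in\partial B',$$
uniformly in $a\in\overline{D}$, $r\leq r_0$, then inserting these bounds into the Poisson representation above immediately gives $\sup_{B(a,r/4)}h\leq M_1\inf_{B(a,r/4)}h$ with $M_1=C_0(M')^2$.

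The main technical obstacle is precisely the passage from the interior Green-function comparison of Lemma \ref{comparaison-Green-functions} to a pointwise comparison of the Poisson kernels, since $P^L_{B'}(x,y)=-\partial_{n_y}G^L_{B'}(x,y)$ is an inward normal derivative at $\partial B'$, and pointwise interior bounds do not automatically control boundary normal derivatives. To close this gap I would exploit the smoothness of $\partial B'$ and of the coefficients of $L$: Hopf-type boundary-point estimates guarantee that for each $x\in B'$ both $G^L_{B'}(x,\cdot)$ and $G^{\Delta}_{B'}(x,\cdot)$ vanish linearly on $\partial B'$ with non-degenerate normal derivatives, controlled uniformly in $a\in\overline{D}$ and $r\leq r_0$, and the quotient $G^L_{B'}(x,\cdot)/G^{\Delta}_{B'}(x,\cdot)$ extends continuously up to $\partial B'$ with boundary value equal to $P^L_{B'}(x,\cdot)/P^{\Delta}_{B'}(x,\cdot)$. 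Combined with the two-sided bounds of Lemma \ref{comparaison-Green-functions}, this forces the boundary quotient into $[M^{-1},M]$ and yields the uniform Poisson-kernel comparison; the rest of the argument is then routine.
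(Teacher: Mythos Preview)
Your route can be made to work, but it is considerably heavier than what the paper actually does, and it contains one inaccuracy worth flagging. The paper does \emph{not} go through Lemma~\ref{comparaison-Green-functions} at all here. It simply invokes the classical Harnack inequality for uniformly elliptic operators (Gilbarg--Trudinger) on a \emph{fixed} ball $B_1=B(a_0,r_0)$, and then obtains the uniformity in $r$ and in $a\in\overline D$ by the same rescaling trick used in the proof of Lemma~\ref{comparaison-Green-functions}: if $h$ is $L$-harmonic on $B_r=B(a,rr_0)$, then $h\circ\psi_r$ is $L_r$-harmonic on $B_1$, where $L_r$ is the operator \eqref{operator}. Since all the $L_r$ belong to the single structural class $\mathcal F_{\lambda,\alpha}$, the Harnack constant on $B_1$ is uniform over the whole family, and pulling back gives \eqref{harnack-indep} with $M_1$ independent of $a$ and $r$. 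This is a two-line argument once the family $L_r$ is in hand.

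By contrast, your approach builds a Poisson-kernel comparison on top of the Green-function comparison. This is doable, but two remarks are in order. First, for a general non-divergence operator $L=\sum a_{ij}\partial_i\partial_j+\ldots$ the Poisson kernel is the \emph{conormal} derivative $-\sum_{ij}a_{ij}(y)n_i(y)\partial_{y_j}G^L_{B'}(x,y)$, not the normal derivative; your stated formula $P^L_{B'}=-\partial_{n_y}G^L_{B'}$ is only correct for $\Delta$. This is harmless in the end --- since $G^L_{B'}(x,\cdot)$ vanishes on $\partial B'$, its gradient at a boundary point is normal, and the conormal and normal derivatives differ by the factor $\sum a_{ij}n_in_j\in[\lambda,\lambda^{-1}]$ --- but it does change your constant. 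Second, you correctly identify that the main work is passing from the interior bound $M^{-1}G^\Delta\le G^L\le MG^\Delta$ to a boundary bound on normal derivatives; your Hopf/L'H\^opital argument along the inward normal does this, because both Green functions vanish on $\partial B'$ and the pointwise inequality then forces the same bounds on the normal-derivative quotient. So your argument closes, at the cost of boundary regularity considerations that the paper's scaling argument bypasses entirely.
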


Finally, we need the following control of potentials:

\begin{lemma}\label{comparaison-function-and-her-potential}
	
	Let $p \in \mathcal{K}_d^{loc}(\Omega)$, {$p \geq 0$} and let $s$ be a positive super\-harmonic function 
	Then, under the same hypotheses as in Lemma
	\ref{comparaison-Green-functions}, for every $r\in[0,r_0]$ and $a\in \overline{D}$
	$$ G_{B_r}^L(p s)(x)\leq 2M^3c_d s(x) \sup_{y\in {B_r}} \int_{B_r} G_{\mathbb{R}^d}^{\Delta}(y,z)p(z)dz,  \hbox{  $x\in {B_r}$},$$
	where ${B_r}=B(a,r)$ and $c_d$ is a constant depending only on the dimension as in Lemma \ref{3-G
		theorem for elliptic operator} below.
\end{lemma}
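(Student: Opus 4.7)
My plan is to combine the ``3G inequality'' for the Laplacian with the Riesz decomposition of $s$. First, I would start from the classical 3G inequality for $G_{B_r}^{\Delta}$,
\begin{equation*}
G_{B_r}^{\Delta}(x,y)G_{B_r}^{\Delta}(y,z)\leq c_d\, G_{B_r}^{\Delta}(x,z)\bigl[G_{\R^d}^{\Delta}(x,y)+G_{\R^d}^{\Delta}(y,z)\bigr],
\end{equation*}
and transfer it to $L$ using the two-sided comparison $M^{-1}G_{B_r}^{\Delta}\leq G_{B_r}^{L}\leq M G_{B_r}^{\Delta}$ provided by Lemma \ref{comparaison-Green-functions}. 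Applying this comparison to each of the three $L$-Green functions contributes a factor $M^3$, yielding, for $x,y,z\in B_r$,
\begin{equation*}
G_{B_r}^{L}(x,y)G_{B_r}^{L}(y,z)\leq M^3c_d\, G_{B_r}^{L}(x,z)\bigl[G_{\R^d}^{\Delta}(x,y)+G_{\R^d}^{\Delta}(y,z)\bigr].
\end{equation*}
Integrating in $y$ against $p(y)\,dy$ and writing $K=\sup_{w\in B_r}\int_{B_r}G_{\R^d}^{\Delta}(w,y)p(y)\,dy$, one obtains the key auxiliary bound
\begin{equation*}
\int_{B_r}G_{B_r}^{L}(x,y)\,p(y)\,G_{B_r}^{L}(y,z)\,dy\leq 2M^3c_d K\, G_{B_r}^{L}(x,z),\qquad x,z\in B_r.
\end{equation*}

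Next, I would invoke the Riesz decomposition of $s$ on $B_r$: $s=G_{B_r}^{L}\mu+h$, with $\mu\geq 0$ a Radon measure and $h\geq 0$ an $L$-harmonic function on $B_r$. By Fubini the ``potential part'' is immediately controlled by the auxiliary estimate:
\begin{equation*}
G_{B_r}^{L}(p\cdot G_{B_r}^{L}\mu)(x)=\int_{B_r}\!\!\int_{B_r}G_{B_r}^{L}(x,y)p(y)G_{B_r}^{L}(y,z)\,dy\,d\mu(z)\leq 2M^3c_d K\, G_{B_r}^{L}\mu(x).
\end{equation*}

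The main obstacle is the harmonic piece $G_{B_r}^{L}(ph)(x)$, since $h$ need not be a Green potential on $B_r$. To handle it I would use the Martin (Poisson) representation $h(y)=\int_{\partial B_r}K_L(y,\xi)\,d\nu(\xi)$ for positive $L$-harmonic functions and pass to the limit $z\to\xi\in\partial B_r$ in the 3G inequality, which yields $G_{B_r}^{L}(x,y)K_L(y,\xi)\leq M^3c_d K_L(x,\xi)[G_{\R^d}^{\Delta}(x,y)+G_{\R^d}^{\Delta}(y,\xi)]$. Because $p\in\mathcal{K}_d^{loc}$, the potential $w\mapsto\int_{B_r}G_{\R^d}^{\Delta}(w,y)p(y)\,dy$ extends continuously to $\overline{B_r}$, so integrating the bracketed term against $p(y)\,dy$ still gives at most $2K$ even when $\xi\in\partial B_r$. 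Integrating in $y$ and then in $\nu$ gives $G_{B_r}^{L}(ph)(x)\leq 2M^3c_d K\, h(x)$. Adding the two contributions produces the required bound
\begin{equation*}
G_{B_r}^{L}(ps)(x)\leq 2M^3c_d K\,\bigl(G_{B_r}^{L}\mu(x)+h(x)\bigr)=2M^3c_d K\,s(x).
\end{equation*}
A route that circumvents the Martin kernel is to approximate $h$ from below by Green potentials $G_{B_r}^{L}\nu_n$ whose measures $\nu_n$ concentrate near $\partial B_r$, apply the auxiliary estimate to each $\nu_n$, and conclude by monotone convergence.
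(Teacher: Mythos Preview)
Your argument is correct and essentially matches the paper's proof: both rest on the integrated 3G inequality for $G_{B_r}^{L}$ (your ``auxiliary bound'' is exactly Lemma \ref{3-G theorem for elliptic operator}) followed by an approximation of the superharmonic function by Green potentials and monotone convergence. The only difference is organizational: the paper skips the Riesz decomposition and the Martin-kernel discussion entirely, instead approximating $s$ itself from below by an increasing sequence of $L$-Green potentials $s_n=G_{B_r}^{L}\lambda_n$, applying the 3G estimate to each $s_n$, and passing to the limit---which is precisely the ``alternative route'' you sketch at the end, applied to $s$ rather than only to the harmonic part $h$.
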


\subsection{Proof of Theorem \ref{Mainthe-Harnack-ineq}}
Now we are ready to prove Theorem \ref{Mainthe-Harnack-ineq}. In fact we prove a stronger
statement - a uniform Harnack-type inequality for a family of small balls. Then a standard
compactness argument will do.
We follow the ideas of \cite{Boukricha}.

\begin{theorem}\label{generalized-Harnack-ineq}
	Under the same hypotheses as in Theorem \ref{Mainthe-Harnack-ineq} and Lem\-ma
	\ref{comparaison-Green-functions}, there exist a constant $C>0$ such that for every $r\leq r_0$,
	every ball $B=B(a,r)$,  $a\in \overline D$ and every positive solution of \eqref{Main-problem} in
	$B$ we have
	\begin{equation}\label{uniform-Har}
	\sup_{x\in B(a, \frac{r}{4})}u(x)\leq C (1+\inf_{x\in B(a,\frac{r}{4})}u(x)).
	\end{equation}
\end{theorem}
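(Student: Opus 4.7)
The plan is to exploit the Riesz-type decomposition of a positive solution $u$, bound the nonlinear part of the decomposition uniformly by the harmonic part using Lemma \ref{comparaison-function-and-her-potential}, and then apply the uniform Harnack inequality of Lemma \ref{constant-harnack-indepandence} to the harmonic part. Fix $r\leq r_0$, $a\in \overline{D}$, set $B=B(a,r)$, and let $u$ be a positive solution of \eqref{Main-problem} in $B$. By the analogue of \eqref{oneoner}, if $h$ denotes the $L$-harmonic function in $B$ with boundary values $u|_{\partial B}$, then
\begin{equation*}
h = u + G_B^L(\varphi(\cdot,u))\quad \text{in }B,
\end{equation*}
so $u\leq h$ and $h\geq 0$.

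The next step is to show that $h$ and $u$ are comparable up to an additive constant. Using $(H_1')$ together with $u\leq h$, we get $\varphi(\cdot,u)\leq p(u+1)\leq p(h+1)$. The function $s := h+1$ is positive and $L$-superharmonic because $Lh=0$ and $L1\leq 0$. Applying Lemma \ref{comparaison-function-and-her-potential} to $ps$,
\begin{equation*}
G_B^L(\varphi(\cdot,u))(x)\leq 2M^3 c_d\bigl(h(x)+1\bigr)\,\eta(r),\qquad \eta(r) := \sup_{y\in B}\int_{B} G_{\R^d}^{\Delta}(y,z)\,p(z)\,dz.
\end{equation*}
Since $p\in \mathcal{K}_d^{loc}(\Omega)$, the Kato condition yields $\eta(r)\to 0$ as $r\to 0$. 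Shrinking $r_0$ if necessary (which is harmless for the final Theorem \ref{Mainthe-Harnack-ineq}, where a chain-of-balls argument will cover the compact set $R$), we may assume $2M^3 c_d\,\eta(r)\leq \tfrac{1}{2}$ for every $r\leq r_0$, uniformly in $a\in \overline{D}$.

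Substituting this bound back into the identity $h=u+G_B^L(\varphi(\cdot,u))$ gives
\begin{equation*}
h \leq u + \tfrac{1}{2}(h+1)\quad\Longrightarrow\quad h\leq 2u+1\quad\text{on }B.
\end{equation*}
Now apply Lemma \ref{constant-harnack-indepandence} to the positive $L$-harmonic function $h$: there is a constant $M_1$, independent of $a$ and $r$, with $\sup_{B(a,r/4)}h\leq M_1 \inf_{B(a,r/4)}h$. Combining the inequalities $u\leq h$ and $h\leq 2u+1$ yields
\begin{equation*}
\sup_{B(a,r/4)}u \leq \sup_{B(a,r/4)}h \leq M_1 \inf_{B(a,r/4)}h \leq M_1\bigl(2\inf_{B(a,r/4)}u+1\bigr)\leq 2M_1\bigl(1+\inf_{B(a,r/4)}u\bigr),
\end{equation*}
so \eqref{uniform-Har} holds with $C=2M_1$.

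The main obstacle is ensuring the coefficient $2M^3 c_d\,\eta(r)$ is strictly less than $1$, so that the absorption step $h\leq 2u+1$ goes through; this is precisely what the local Kato assumption on $p$ provides, but it forces a uniform smallness of $r_0$ that depends on $p$, $\overline{D}$, and the constants from Lemmas \ref{comparaison-Green-functions}--\ref{comparaison-function-and-her-potential}. Once this uniform local inequality is established, Theorem \ref{Mainthe-Harnack-ineq} follows by a standard covering-and-chaining argument: any compact $R\subset \Omega$ can be covered by finitely many balls of radius $r_0/4$ whose centres form a connected chain, and iterating \eqref{uniform-Har} along the chain yields the global inequality with a constant $C_R$ depending only on $R$ and $\Omega$.
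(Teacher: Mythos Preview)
Your proof is correct and follows essentially the same route as the paper's: write $h=H_Bu$, use $u\leq h$, apply Lemma \ref{comparaison-function-and-her-potential} together with the Kato condition to absorb the potential term and obtain $h\leq 2u+O(1)$, then apply the uniform Harnack inequality (Lemma \ref{constant-harnack-indepandence}) to $h$. The only cosmetic difference is that you apply Lemma \ref{comparaison-function-and-her-potential} once to the superharmonic function $s=h+1$ (using $L1\leq 0$), whereas the paper splits $G_B^L(p(h+1))=G_B^L(ph)+G_B^L(p)$ and bounds the two pieces separately; both yield the same constant $C=2M_1$.
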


\begin{proof}
	
	
	Let $r_0$ be such that the assumptions of Lemma \ref{comparaison-Green-functions} are satisfied. We
	can impose in addition that $r_0\leq 1$. Let $M$ the constant in \eqref{comparison}. We have
	$$\lim_{r\to 0}\sup_{x\in B(a,r)} \int_{B(a,r)} G_{\mathbb{R}^d}^{\Delta}(x,z)p(z)\,dz=0,$$
	where $p$ is as in $(H'_1)$. Hence making $r_0$ possibly smaller we may conclude that for every $r\in ]0,r_0]$
	\begin{equation}\label{Kato-c-1}
	\max (2M^3c_d, M\slash 2) \sup_{x\in B} \int_B G_{\mathbb{R}^d}^{\Delta}(x,z)p(z)\,dz\leq 1/2,
	\end{equation}
	Let $x,y \in B(a,\frac{r}{4})\subset B=B(a,r)$ and $u$ be a positive solution of
	\eqref{Main-problem} in $B(a,r)$. Then
	$$\begin{aligned}[t]
	u(x)=H_Bu(x)-G_B^L(\varphi(\cdot,u))(x)
	&\leq H_Bu(x).
	\end{aligned}$$
	Let now $M_1$ be the constant in \eqref{harnack-indep}. Since $H_Bu$ is an $L$-harmonic function in
	$B(a,r)$ then for every $x,y\in B(a,\frac{r}{4})$
	\begin{equation}\label{Har1}
	H_Bu(x) \leq  M_1 H_Bu(y).
	\end{equation}
	In addition, by $(H'_1)$
	$$\begin{aligned}[t]
	H_Bu(y)=u(y)+G_B^L(\varphi(\cdot,u))(y)
	&\leq u(y)+G_B^L(\varphi(\cdot,H_Bu))(y)\\
	&\leq u(y)+G_B^L(p(H_Bu+1)(y)\\
	&\leq u(y)+G_B^L(pH_Bu)(y)+G_B^L(p)(y)
	\end{aligned}$$
	Moreover, by Lemma \ref{comparaison-function-and-her-potential} $$ G_B^L(pH_Bu)(y)\leq 2M^3c_d
	H_Bu(y) \sup_{x\in B} \int_B G_{\mathbb{R}^d}^{\Delta}(x,z)p(z)\,dz,$$ So by \eqref{Kato-c-1} $$
	H_Bu(y)\leq u(y)+1/2 H_Bu(y)+G_B^L(p)(y).$$ Multiplying both sides by 2, we get:
	$$ H_Bu(y)\leq 2u(y)+2G_B^L(p)(y).$$
	Furthermore, applying Lemma \ref{comparaison-Green-functions} and \eqref{Kato-c-1} again, we have $$ G_B^L(p)(y) \leq M G_B^{\Delta}(p)(y) \leq
	1.$$ Consequently
	$$ H_Bu(y) \leq  2u(y) + 2.$$ By \eqref{Har1} we conclude
	
	$$ u(x) \leq 2M_1(u(y)+1), \hbox{   $x,y\in B(a,\frac{r}{4})$}.$$
\end{proof}

\subsection{Proofs of lemmas }
To obtain a uniform comparison \eqref{comparison} for small balls we will use Theorem \ref{Theorem
	Hueber Sieveking} below. Suppose we have a family {$\mathcal{F}_{\lambda, \alpha }$ of second order
	strictly elliptic operators on $\R ^d$ having $\alpha$-H\"older coefficients and $\lambda$ the
	ellipticity constant.} By the $\alpha $ - H\"older norm of a bounded function $f$ we mean here
$$
\| f\| _{\alpha }=\sup _{x\in R^d}|f(x)| + \sup _{x\neq y\in R^d}\frac{|f(x)-f(y)|}{|x-y|^{\alpha}}.
$$
We assume that {$0<\lambda <1$}. $\mathcal{L}\in \mathcal{F}_{\lambda, \alpha }$ is of the form
$$\mathcal{L}=\sum _{i,j}a_{i,j}(x) \partial_i \partial _j+\sum_i  b_i(x) \partial_i+c(x),$$ where $a_{ij}=a_{ji}$,
for every $x, \xi\in\mathbb{R}^d$ $$ a_{ij}(x)\xi_i\xi_j\geq \lambda |\xi|^2$$
and
$$
\| a_{ij}\| _{\alpha }, \| b_{i}\| _{\alpha }, \| c\| _{\alpha }\leq \lambda ^{-1}.$$

The main ingredient in the proof of Lemma \ref{comparaison-Green-functions}  is the following
theorem

\begin{theorem}\label{Theorem Hueber Sieveking}(\cite{Hueber/Sieveking})
	Suppose $\lambda $ is fixed.  For every bounded $\mathcal{C}^{1,1}$ domain $D$ there exists a
	constant $C>0$ such that
	$$ C^{-1}G_D^{\Delta}\leq G_D^{\mathcal{L}}\leq C G_D^{\Delta}, \hbox{  on }D\times D. $$
	for every $\mathcal{L}\in \mathcal{F}_{\lambda,\alpha }$.
\end{theorem}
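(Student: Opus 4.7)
The plan is to prove the comparison by combining interior heat kernel bounds with boundary Harnack techniques, in the style of the original proof by Hueber and Sieveking. The core observation is that for operators $\mathcal{L}\in \mathcal{F}_{\lambda,\alpha}$, the fundamental solution is comparable to $|x-y|^{2-d}$ near the diagonal with constants depending only on $\lambda$, and the global ratio $G_D^{\mathcal L}/G_D^{\Delta}$ behaves like a ratio of positive functions vanishing on $\partial D$, which can be controlled via a boundary Harnack argument on a $\mathcal{C}^{1,1}$ domain.

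First I would establish the local (interior) comparison. Using Aronson's two-sided Gaussian estimates for the heat kernel $p_t^{\mathcal L}$ of a uniformly elliptic operator with bounded measurable coefficients, integrating in $t$ yields
\begin{equation*}
C_1^{-1}|x-y|^{2-d}\leq \int_0^{T}p_t^{\mathcal{L}}(x,y)\,dt\leq C_1 |x-y|^{2-d}
\end{equation*}
for $|x-y|$ smaller than some fixed scale, with $C_1$ depending only on $\lambda$. Since a similar identity holds for the Laplacian's heat kernel, one gets $G_D^{\mathcal{L}}(x,y)\asymp G_D^{\Delta}(x,y)$ whenever $x,y$ are both well inside $D$ and close to each other. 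The H\"older hypothesis on the coefficients is not actually needed at this stage (only ellipticity), but it becomes useful in the next step.

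Next I would handle points where $x,y$ are close to $\partial D$ or far apart inside $D$. For a fixed $y_0\in D$ not too close to the boundary, the function $x\mapsto G_D^{\mathcal L}(x,y_0)$ is a positive $\mathcal L$-harmonic function in $D\setminus\{y_0\}$ vanishing on $\partial D$, and similarly for $\Delta$. On a $\mathcal{C}^{1,1}$ domain, Hopf-type barrier arguments combined with the boundary Harnack principle (which holds for operators in $\mathcal{F}_{\lambda,\alpha}$ with uniform constants) give $G_D^{\mathcal L}(x,y_0)\asymp \delta(x)\cdot \Phi(x)$, where $\delta$ is the distance to $\partial D$ and $\Phi$ is a regularized kernel bounded between two uniform constants. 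The same estimate for $\Delta$ then yields comparability of boundary decays. A standard Harnack chain argument, using that inf$\,$ and sup$\,$ of positive $\mathcal L$-harmonic functions on compact subsets of $D$ are controlled uniformly over $\mathcal F_{\lambda,\alpha}$, removes the dependence on $y_0$.

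Finally I would glue the two regimes: split $D\times D$ into the set where $|x-y|\leq c\,\mathrm{diam}(D)$ (handled by the interior estimate) and its complement (handled via the boundary Harnack and Harnack chain). The main obstacle is the uniformity of constants across the entire class $\mathcal{F}_{\lambda,\alpha}$; this is exactly where the uniform ellipticity together with the uniform $\alpha$-H\"older bound on the coefficients enters, because Schauder estimates and the boundary Harnack principle then depend only on $\lambda$, $\alpha$ and $D$. Working out all the constants explicitly is technical but routine once the two regimes above are in place; the conceptual difficulty lies in proving a boundary Harnack principle that is uniform in the operator, which is the heart of the Hueber--Sieveking paper.
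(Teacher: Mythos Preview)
The paper does not contain a proof of this statement; it is quoted verbatim as a result of Hueber and Sieveking \cite{Hueber/Sieveking} and used as a black box in the proof of Lemma~\ref{comparaison-Green-functions}. There is therefore nothing in the paper to compare your proposal against.

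Your sketch is a reasonable outline of the strategy in the original Hueber--Sieveking paper (interior comparison via pointwise bounds on the fundamental solution, boundary behaviour via a uniform boundary Harnack principle on $\mathcal{C}^{1,1}$ domains, and Harnack chains to glue the regimes), and you correctly identify the main difficulty as the uniformity of constants over the class $\mathcal{F}_{\lambda,\alpha}$. However, as written it remains a high-level plan rather than a proof: the uniform boundary Harnack principle for operators with a zeroth-order term $c(x)$ (which need not be nonpositive here, only bounded in H\"older norm) requires care, and the gluing step needs the two-sided estimate $G_D^{\mathcal L}(x,y)\asymp \delta(x)\delta(y)|x-y|^{2-d}$ (or a comparable expression) rather than just a one-variable decay $\delta(x)\Phi(x)$. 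For the purposes of the present paper none of this matters, since the theorem is simply invoked from the literature.
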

\begin{proof}[Proof of Lemma \ref{comparaison-Green-functions}]
	Without loss of generality we may assume that $L\in \mathcal{F}_{\lambda,\alpha }$. Indeed, $L$,
	defined originally on $\Omega $, may be extended to an operator $\tilde L \in
	\mathcal{F}_{\lambda,\alpha
	}$ in the way that $\tilde L = L$ in $\displaystyle\bigcup _{a\in
		\overline D}B(a,r_0)$.
	
	Fix $a_0\in \overline D$. For $B_1=B(a_0,r_0)$, \eqref{comparison} follows directly from Theorem
	\ref{Theorem Hueber Sieveking}. 
	To get the same statement for $B(a,rr_0)$ one needs to consider
	the family of operators 
	\begin{equation}\label{operator}
	L_r=\sum _{i,j}a_{i,j}(\psi_r(x))\partial_i \partial_j+ r\sum _{i}b_i(\psi_r(x))\partial_i+r^2c(\psi_r(x)), \quad r\leq 1 \end{equation}
	on $B_1=B(a_0,r_0)$ where \begin{equation*} \psi_r: B_1\to B_r=B(a,rr_0)\quad \mbox{is given
		by}\quad \psi _r(x)= a+r(x-a_0).
	\end{equation*} Since $L_r\in \mathcal{F}_{\lambda,\alpha }$, the
	estimate in \eqref{comparison} on $B_1$ for the whole class of operators $L_r$  is uniform, i.e.
	\begin{equation}\label{L-r-comparison}
	C^{-1} G_{B_1}^{\Delta}(x,y)\leq G_{B_1}^{L_r}(x,y)\leq C \, G_{B_1}^{\Delta}(x,y), \quad x,y\in B_1.\end{equation}
	In addition, if $u$ is a $L$-harmonic function on $B_r$ then $u\circ\psi_r$ is $L_r$-harmonic on
	$B_1$ i.e. $L_r(u\circ\psi_r)(x)=r^2(Lu)(\psi_r(x)),$ $x\in B_1$. It follows that
	$$G^L_{B_r}(x,y)=r^{-d+2}G^{L_r}_{B_1}(\psi^{-1}_r(x),\psi^{-1}_r(y)), \hbox{ }x,y\in B_r,$$
	where \begin{equation*}
	\psi_r^{-1}: B_r\to B_1\\
	\end{equation*} is the inverse function to $\psi _r$. Moreover, $\Delta_r=\Delta$, so we have the same for $G_{B_r}^{\Delta}$. Finally, replacing $G^{L_r}_{B_1}$, $G^{\Delta }_{B_1}$ in \eqref{L-r-comparison} by $G^{L}_{B_r}$, $G^{\Delta }_{B_r}$ we get
	\eqref{comparison}.
\end{proof}

Lemma \ref{constant-harnack-indepandence} follows from the classical Harnack inequality for
elliptic operators, see \cite{Gilbarg/Trudinger}, p.189. To get the constant independent of $r$ we
use the same family of operators $L_r$.

Due to Lemma \ref{comparaison-Green-functions} we are able to conclude a version of 3-G theorem for
$L$ adopted to our setting:

\begin{lemma}\label{3-G theorem for elliptic operator}
	Suppose that the assumptions of Lemma \ref{comparaison-Green-functions} are satisfied and
	$B=B(a,r)$. Then for every open ball $B$ in $\mathbb{R}^d$ and a Radon positive measure $\nu$, we
	have
	\begin{equation}\label{3G}
	\int_B G_B^L(x,z)G_B^L(z,y)\nu(dz)\leq 2M^3 c_d G_B^{L}(x,y)\gamma _{\nu }(B),
	\end{equation}
	where
	$$\gamma _{\nu }(B)=\sup_{x\in B}\int_B G_{\mathbb{R}^d}^{\Delta}(x,z)\nu(dz)$$
	and $c_d$ is a constant depending only on the dimension.
\end{lemma}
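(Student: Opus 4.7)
The plan is to reduce the inequality \eqref{3G} for the operator $L$ to the classical 3-G inequality for the Laplacian and then absorb the passage from $L$ to $\Delta$ using the uniform comparison of Green functions already furnished by Lemma \ref{comparaison-Green-functions}.

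First I would invoke the classical 3-G inequality for the Laplacian on the ball $B$: there is a constant $c_d>0$ depending only on the dimension such that, for all $x,y,z\in B$,
\begin{equation}\label{plan-3G-classical}
\frac{G_B^{\Delta}(x,z)\,G_B^{\Delta}(z,y)}{G_B^{\Delta}(x,y)}\;\leq\; c_d\Bigl(G_{\mathbb{R}^d}^{\Delta}(x,z)+G_{\mathbb{R}^d}^{\Delta}(z,y)\Bigr).
\end{equation}
This is standard and can be deduced from the two-sided boundary estimates on $G_B^{\Delta}$ (the Cranston--Zhao style inequality on a Lipschitz, hence $C^{1,1}$, domain).

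Next, Lemma \ref{comparaison-Green-functions} gives, uniformly for all balls $B=B(a,r)$ with $a\in\overline{D}$ and $r\leq r_0$, the two-sided comparison $M^{-1}G_B^{\Delta}\leq G_B^L\leq M\,G_B^{\Delta}$. Applying the upper bound twice in the numerator and the lower bound once in the denominator, we obtain, for all $x,y,z\in B$,
\begin{equation}\label{plan-LtoLaplace}
\frac{G_B^L(x,z)\,G_B^L(z,y)}{G_B^L(x,y)}\;\leq\; M^{3}\,\frac{G_B^{\Delta}(x,z)\,G_B^{\Delta}(z,y)}{G_B^{\Delta}(x,y)}.
\end{equation}
Combining \eqref{plan-LtoLaplace} with \eqref{plan-3G-classical} yields
\[
\frac{G_B^L(x,z)\,G_B^L(z,y)}{G_B^L(x,y)}\;\leq\; M^{3}c_d\Bigl(G_{\mathbb{R}^d}^{\Delta}(x,z)+G_{\mathbb{R}^d}^{\Delta}(z,y)\Bigr).
\]

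Finally I would integrate against the positive Radon measure $\nu$ on $B$ and use the definition of $\gamma_{\nu}(B)$, which bounds each of the two integrals $\int_B G_{\mathbb{R}^d}^{\Delta}(x,z)\,\nu(dz)$ and $\int_B G_{\mathbb{R}^d}^{\Delta}(z,y)\,\nu(dz)$ by $\gamma_{\nu}(B)$. Multiplying through by $G_B^L(x,y)$ gives
\[
\int_B G_B^L(x,z)\,G_B^L(z,y)\,\nu(dz)\;\leq\; 2M^{3}c_d\,G_B^L(x,y)\,\gamma_{\nu}(B),
\]
which is exactly \eqref{3G}.

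The only nontrivial step is \eqref{plan-3G-classical}; everything else is bookkeeping with the constant $M$ from Lemma \ref{comparaison-Green-functions}. That classical inequality is available in the literature for any bounded $C^{1,1}$ domain (in particular for balls) and is the real source of the dimensional constant $c_d$; I would simply cite it rather than reprove it, since its derivation relies on well-known sharp two-sided boundary estimates for $G_B^{\Delta}$ that are unrelated to the present setting.
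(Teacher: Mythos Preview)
Your proposal is correct and takes essentially the same approach as the paper: the paper simply cites the already-integrated 3-G inequality for $G_B^{\Delta}$ from \cite{B.H.H}, namely $\int_B G_B^{\Delta}(x,z)G_B^{\Delta}(z,y)\,\nu(dz)\leq 2c_d\, G_B^{\Delta}(x,y)\,\gamma_{\nu}(B)$, and then applies the two-sided comparison of Lemma~\ref{comparaison-Green-functions} to pick up the factor $M^3$. You obtain the same inequality by first writing the pointwise 3-G for $\Delta$ and then integrating, which is a harmless variation of the same argument.
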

\noindent \eqref{3G} follows directly from an analogous statement for $G_B^{\Delta}$, see \cite{B.H.H},
p.132:
\begin{equation*}
\int_B G_B^{\Delta}(x,z)G_B^{\Delta}(z,y)\nu(dz)
\leq 2c_d G_B^{\Delta}(x,y) \gamma _{\nu}(B).
\end{equation*}
Finally, we are able to prove Lemma \ref{comparaison-function-and-her-potential}
\begin{proof}
	Let $\lambda$ a positive Radon measure and $q(z)=\int _BG_B^L(z,y) \lambda(dy)$ By Lemma \ref{3-G
		theorem for elliptic operator} applied to $ p(z)\,dz$ instead of $\nu(dz)$, we obtain
	$$\begin{aligned}[t]
	&G_B^L(pq)(x)=\int_B G_B^L(x,z)p (z)\Big (\int_B G_B^L(z,y)  \lambda(dy)\Big ) dz\\
	&=\int_B \int_B G_B^L(x,z)G_B^L(z,y) p(z)dz \lambda(dy)\\
	&\leq 2M^3 c_d \int _BG_B^L(x,y)\sup_{x\in B}( \int_B G_{\mathbb{R}^d}^{\Delta}(x,z) p(z)dz)\lambda(dy)\\
	&=2M^3 c_d q(x) \Big (\sup_{x\in B} \int_B G_{\mathbb{R}^d}^{\Delta}(x,z) p(z)dz\Big )
	\end{aligned}$$
	Or $s$ is the limit of locally integrable increasing potentials $(s_n)$ in $B$ which can be written
	$$s_n(x)= \int_{B} G_{B}^L(x,y)\lambda_n(dy), \hbox{ where $\lambda_n $ is a positive Radon
		measure}. $$ Then by integration with respect to $dy$, we can conclude for every $n\in\mathbb{N}$,
	$$ G_B^L(p s_n)(x)\leq 2M^3c_d s_n(x) \sup_{x\in B} \int_B G_{\mathbb{R}^d}^{\Delta}(x,z) p(z)(dz), $$
	and by the monotone convergence theorem we deduce the result.
	
\end{proof}



\begin{thebibliography}{99}
	
	
	\bibitem{Anker-Damek-Yacoub} J.P. Anker, E. Damek, C. Yacoub, Spherical Analysis
	of Harmonic $AN$ Groups, Estratto dagli Annali della Scuola Normale
	Superiore di Pisa, Scienze Fisiche e Matematiche-Serie IV. Vol. XXIII.
	Fasc. 4(1996).
	
	\bibitem{Astrita-Marrucci} G. Astrita, G. Marrucci, Principles of non-Newtonian fluid mechanics, McGraw-Hill, 1974.    
	
\bibitem{AACH} R. Atar, S. Athreya and Z-Q. Chen, \emph{Exit time, Green 
function and semilinear elliptic
equations}, Electronic Journal of Probability 14 (2009), 50-71.

\bibitem{BH} A. Baalal and W. Hansen, \emph{Nonlinear perturbation of bayage spaces}, Annales Academiae
	Scientiarum Fennicae Mathematica 27 (2002), 163-172.

\bibitem{Bandle} C. Bandle and M. Marcus, \emph{Large solutions of semilinear elliptic equations: existence,
uniqueness and asymptotic behaviour}, Journal d'Analyse Math\'ematique 58 (1992), 9-24.


	\bibitem{BTV} J. Berndt, F. Tricerri, L. Vanhecke, Generalized Heisenberg groups
	and Damek-Ricci harmonic space, Lecture note in Mathematics number 1598,
	Springer-Verlag 1995.
	
	\bibitem{Boukricha} A. Boukricha, Harnack inequality for nonlinear harmonic spaces, Math Ann. 317
	(3) (2000), 567-583.
	
	\bibitem{B.H.H} A. Boukricha, W. Hansen and H. Hueber, Continuous solutions of the
	generalized Schr\"odinger equation and perturbation of harmonic spaces ,
	Expo.Math.5 (1987), 97-135.
	
	\bibitem{Cheng-Ni} K-S. Cheng, W-M. Ni, On the Structure of the Conformal Scalar Curvature Equation on $\R^n$, Indiana University Mathematics Journal, Vol. 41, No. 1 (Spring, 1992), pp. 261-278.    
	
	\bibitem{D-R} E. Damek, F. Ricci, Harmonic analysis on solvable extensions of H-type groups, The
	Journal of Geometric Analysis Volume 2, Number 3, 1992.
	
	\bibitem{D-G} E. Damek, Z. Ghardallou, Large versus bounded solutions to sublinear elliptic
	problems, arXiv:1710.04922
	
	
	\bibitem{ELMabrouk2004} K. El Mabrouk, Entire bounded solutions for a class of
	sublinear elliptic equations, Nonlinear Anal.58 (1-2)(2004)205-218.
	
	\bibitem{ElmabroukHansen} K. El Mabrouk, W.Hansen, Nonradial large solutions of sublinear
	elliptic problems, J. Math. Anal. Appl. \textbf{330} (2007)1025-1041.
	
	\bibitem{Ghardallou2} Z. Ghardallou, Positive solutions to sublinear elliptic problem, arXiv:1604.00633 (Accepted for publication in Colloquium Mathematicum on 31 January 2018)
	
	\bibitem{thesis} Z. Ghardallou, Th\`ese: Positive solutions to some nonlinear elliptic problems, (2016).
	
	\bibitem{Gilbarg/Trudinger}D. Gilbarg, N.S. Trudinger, Elliptic partial differential equations of second order, Springer-Verlag(1977).
	
	\bibitem{Hueber/Sieveking} H. Hueber, M. Sieveking, Uniform bounds for
	quotients of Green functions on $C^{1,1}$-domains, Annales de l'institut
	Fourier,  \textbf{32, 1}(1982) 105-117.
	
	\bibitem{keller} J.B. Keller, "On solution $\Delta u = f(u)$," Comm. Pure Appl. Math., 10:503-510 (1957).    
	
	\bibitem{Lair2007} A.V. Lair, Large solutions of semilinear elliptic equations under the
	Keller-Osserman
	condition, J.Math.Anal.Appl.\textbf{328}, (2007), 1247-1254.

\bibitem{Lair} A.V. Lair, \emph{Large solutions of mixed sublinear$\slash$superlinear elliptic equations},
J. Math. Anal. Appl. 346 (2008), 99-106.

\bibitem{LM} A. Lair, A. Mohammed, 
\emph{Entire large solutions of  semilinear elliptic equations of mixed type},
Commun. Pure Appl. Anal. \textbf{8}, (2009), no 5,1607-1618.
	
	\bibitem{LairWood2000} A.V. Lair, A.W. Wood, Large solutions of sublinear elliptic equations,
	Nonlinear Anal.\textbf{39}, (2000), 745-753.
	
	\bibitem{Lazer-Mckenna} A.C. Lazer, P.J. McKenna, On a problem of Bieberbach and Rademacher, Nonlinear Anal. 21 (1993) 327-335.  
	
	\bibitem{Osserman} R. Osserman, "On the inequality $\Delta u \geq f(u)$," Pacific J. Math., 7:1641-1647 (1957).
	
\bibitem{Ahmed} A. Mohammed, \emph{On ground state solutions to mixed type singular semi-linear elliptic
equations}, Advances Nonlinear Studies 10 (2010), 231-244.


	\bibitem{Pohzaev} S. L. Pohzaev, The Dirichlet problem for the equation Δu = u2, Soviet Math.Dokl. 1(1960)1143-1146.    
	
	
	\bibitem{Rouvière} F. Rouvi\`ere, Espaces de Damek-Ricci, g\'eom\'etrie et analyse,
	S\'emin\`ere et congr\`es 7, (2003).
	
	\bibitem{selmi_these} M. Selmi, Th\`ese: Perturbation et comparaison des
	r\'esolvantes et des semi-groupes, (1992).
	
\end{thebibliography}
\end{document}